\documentclass[12pt,a4paper,final]{amsart}
\usepackage{amssymb}
\usepackage{hyperref}
\usepackage[notcite, notref]{showkeys}
\usepackage[utf8]{inputenc}
\usepackage[figuresright]{rotating}
\usepackage{comment}

\usepackage{color}
\usepackage{listings} 
\lstdefinelanguage{Sage}[]{Python}
{morekeywords={False,sage,True},sensitive=true}
\lstset{
	frame=trbl,
	showtabs=False,
	showspaces=False,
	showstringspaces=False,
	commentstyle={\ttfamily\color{dgreencolor}},
	keywordstyle={\ttfamily\color{dbluecolor}\bfseries},
	stringstyle={\ttfamily\color{dgraycolor}\bfseries},
	language=Sage,
	basicstyle={\fontsize{10pt}{10pt}\ttfamily},
	aboveskip=0.3em,
	belowskip=0.1em,
	tabsize=1
}
\definecolor{dblackcolor}{rgb}{0.0,0.0,0.0}
\definecolor{dbluecolor}{rgb}{0.01,0.02,0.7}
\definecolor{dgreencolor}{rgb}{0.2,0.4,0.0}
\definecolor{dgraycolor}{rgb}{0.30,0.3,0.30}

\usepackage{mathtools}

\setlength{\voffset}{-1in}
\setlength{\topmargin}{1.5cm}
\setlength{\hoffset}{-1in}
\setlength{\oddsidemargin}{2cm}
\setlength{\evensidemargin}{2cm}
\setlength{\textwidth}{17cm}
\setlength{\textheight}{24.5cm}
\setlength{\footskip}{0.5cm}

\theoremstyle{plain}
\newtheorem{theorem}{Theorem}[section]
\newtheorem{proposition}[theorem]{Proposition}
\newtheorem{corollary}[theorem]{Corollary}
\newtheorem{lemma}[theorem]{Lemma}
\newtheorem{claim}[theorem]{Claim}

\theoremstyle{definition}

\newtheorem{example}[theorem]{Example}
\newtheorem{remark}[theorem]{Remark}

\newtheorem{notation}[theorem]{Notation}

\newtheorem{algorithm}[theorem]{Algorithm}

\theoremstyle{remark}

\numberwithin{equation}{section}

\newcommand{\N}{\mathbb N}
\newcommand{\Z}{\mathbb Z}

\newcommand{\R}{\mathbb R}
\newcommand{\C}{\mathbb C}

\newcommand{\fe}{\mathfrak e}
\newcommand{\ff}{\mathfrak f}
\newcommand{\fg}{\mathfrak g}
\newcommand{\fh}{\mathfrak h}

\newcommand{\fk}{\mathfrak k}
\newcommand{\fl}{\mathfrak l}
\newcommand{\fm}{\mathfrak m}

\newcommand{\fp}{\mathfrak p}

\newcommand{\ft}{\mathfrak t}

\newcommand{\fz}{\mathfrak z}

\DeclareMathOperator{\GL}{GL}

\DeclareMathOperator{\SO}{SO}
\DeclareMathOperator{\SU}{SU}
\DeclareMathOperator{\Sp}{Sp}
\DeclareMathOperator{\Ut}{U}

\DeclareMathOperator{\Spin}{Spin}

\newcommand{\gl}{\mathfrak{gl}}

\newcommand{\so}{\mathfrak{so}}
\newcommand{\spp}{\mathfrak{sp}}
\newcommand{\su}{\mathfrak{su}}
\newcommand{\ut}{\mathfrak{u}}

\newcommand{\op}{\operatorname}
\newcommand{\Id}{\textup{Id}}

\newcommand{\mi}{\mathrm{i}}

\DeclareMathOperator{\Span}{Span}
\newcommand{\inner}[2]{\langle {#1},{#2}\rangle }
\newcommand{\innerdots}{\langle {\cdot},{\cdot}\rangle }
\newcommand{\ee}{\varepsilon}

\DeclareMathOperator{\Sym}{Sym}

\DeclareMathOperator{\Ad}{Ad}

\DeclareMathOperator{\Cas}{Cas}
\newcommand{\pr}{\op{pr}}
\DeclareMathOperator{\st}{st}
\DeclareMathOperator{\St}{st}

\DeclareMathOperator{\Spec}{Spec}

\DeclareMathOperator{\Scal}{Scal}

\DeclareMathOperator{\Rc}{Rc}

\DeclareMathOperator{\vol}{vol}
\DeclareMathOperator{\rank}{rank}

\newcommand{\kil}{\operatorname{B}}

\newcommand{\PP}{\mathcal{P}} 
\newcommand{\RR}{\mathcal{R}} 

\renewcommand{\AA}{\mathbb{A}}

\newcommand{\DD}{\mathbb{D}} 
\newcommand{\EE}{\mathbb{E}}

\usepackage{hyperref}
\hypersetup{colorlinks=true,linkcolor=blue,citecolor=red}

\usepackage{backref}

\setcounter{tocdepth}{1}

\title[Linear stability of Perelman's $\nu$-entropy]
{Linear stability of Perelman's $\nu$-entropy of standard Einstein manifolds}

\author{Emilio~A.~Lauret}
\address{Instituto de Matemática (INMABB), Departamento de Matemática, Universidad Nacional del Sur (UNS)-CONICET, Bahía Blanca, Argentina.}
\email{emilio.lauret@uns.edu.ar}

\author{Alejandro Tolcachier}
\address{Dipartimento di Scienza ed Alta Tecnologia, Università degli Studi dell’Insubria, Via Valleggio
		11, 22100, Como, Italy}
\email{alejandro.tolcachier@uninsubria.it}

\subjclass[2020]{58C40 53C25 53C30 53C44}
\keywords{$\nu$-stability, Perelman entropy, Hilbert functional, standard manifold, Einstein manifold, isotropy irreducible space}
\thanks{The first named author was supported by grants from FONCyT (PICT-2019-01054), CONICET (PIP 11220210100343CO), and SGCYT--UNS. The second named author was supported by the PRIN 2022 project “Interactions between Geometric Structures and Function Theories” (code 2022MWPMAB)}

\date{\today}

\begin{document}

\begin{abstract}
Paul Schwahn recently exhibited 112 non-symmetric, connected, simply connected, compact Einstein manifolds that are stable with respect to the total scalar curvature functional restricted to the space of Riemannian metrics with constant scalar curvature and fixed volume. 
This stability follows from the inequality $\lambda_L > 2E$, where $\lambda_L$ denotes the smallest eigenvalue of the Lichnerowicz Laplacian on TT-tensors and $E$ is the corresponding Einstein factor.  

In this paper, we estimate the smallest positive eigenvalue $\lambda_1$ of the Laplace–Beltrami operator for connected, simply connected, non-symmetric standard Einstein manifolds $(G/H,g_{\operatorname{st}})$ with $G$ a compact and connected simple Lie group. 
We obtain that $\lambda_1>2E$ for all of them excepting $7$ spaces. 
As a consequence of our estimates, we establish that all stable Einstein manifolds found by Schwahn are in fact linear stable with respect to Perelman's $\nu$-entropy.
\end{abstract} 

\maketitle

\tableofcontents

\section{Introduction}

The spectrum of distinguished differential operators associated with Riemannian manifolds has a strong connection with the underlying topology and geometry. 
The most important case is the Laplace-Beltrami operator $\Delta_g$ associated to a Riemannian manifold $(M,g)$. 
When $M$ is compact and connected, the spectrum of $\Delta_g$ is non-negative and discrete:
\begin{equation*}
0=\lambda_0(M,g)<\lambda_1(M,g)\leq \lambda_2(M,g)
\leq \dots\leq \lambda_k(M,g)\underset{k\to\infty}{\longrightarrow}+\infty.
\end{equation*}
In particular, the multiplicity of every eigenvalue is finite. 

It is of particular importance the value of the smallest positive eigenvalue $\lambda_1(M,g)$ of $\Delta_g$. 
Besides the famous Yau's conjecture on the first eigenvalue of embedded minimal hypersurfaces of unit spheres, $\lambda_1$ is related for instance with the Ricci curvature via the Lichnerowicz-Obata estimate,
Poincaré inequalities \cite{Li82}, 
stability of harmonic maps \cite{Smith75}, 
bifurcation and local rigidity of solutions for the Yamabe problem \cite{BettiolPiccione13a},
among many others. 

The aim of this paper is to estimate $\lambda_1(M,g)$ for certain standard Einstein manifolds to decide their linear stability type of Perelman's $\nu$-entropy (see \cite{CaoHe15}) and their dynamical stability with respect to the Ricci flow (see \cite{Kroencke-thesis}). 
We first introduce a remarkable additional consequence of the $\lambda_1$-estimation which is very related with the subject of this article.

Let $M$ be a compact and connected differentiable manifold. 
It is well known that Einstein metrics $g$ on $M$ (i.e.\ $\Rc(g)=Eg$ for some $E\in\R$) are characterized as critical points of the total scalar curvature functional (also called Hilbert functional or Einstein-Hilbert functional)
\begin{equation*}
\widetilde\Scal(g):=\int_M\Scal(g)\, d\vol_g
\end{equation*}
restricted to the space of Riemannian metrics on $M$ of fixed volume. 
A natural (and classical) problem is to decide the nature of these critical points restricted to the space of Riemannian metrics $\mathcal C_v$ of constant scalar curvature and fixed volume $v$, by considering the second variation of $\widetilde\Scal$ (see \cite[Ch.~4]{Besse} or \cite[Ch.~2--4]{Kroencke-thesis}). 

It turns out that the stability type of an Einstein metric $g$ on $M$ as a critical point of $\widetilde\Scal|_{\mathcal C_v}$ is determined by the value of the smallest eigenvalue $\lambda_L$ of the Lichnerowicz Laplacian of $(M,g)$ restricted to TT-tensors (transversal and traceless symmetric $2$-tensors). 
An Einstein metric $g$ on $M$ is said to be
\begin{itemize}
\item unstable as a critical point of $\widetilde\Scal|_{\mathcal C_{\vol(g)}}$ (abbreviated \emph{H.unstable} following \cite{CaoHe15}) if and only if $\lambda_L<2E$. 
This means that, besides a subspace of finite codimension of TT-tensors where $\widetilde\Scal_g''$ is negative definite, there is a non-trivial TT-tensor $h$ such that $\widetilde\Scal_g''(h,h)>0$, so that $g$ is a saddle point of $\widetilde\Scal|_{\mathcal C_{\vol(g)}}$. 

\item stable as a critical point of $\widetilde\Scal|_{\mathcal C_{\vol(g)}}$ (abbreviated \emph{H.stable}) if and only if $\lambda_L>2E$. 
This means that $\widetilde\Scal_g''$ is negative definite on TT-tensors, so $g$ is a local maximum of $\widetilde\Scal|_{\mathcal C_{\vol(g)}}$.

\end{itemize}
Furthermore, we will use the following nomenclature:
\begin{itemize}
\item $g$ is \emph{neutrally H.stable} if  $\lambda_L=2E$. 
\item $g$ is \emph{H.semistable} if $\lambda_L\geq2E$. (This was called H.stable in \cite{CaoHe15}.)
\end{itemize}

J. Lauret introduced in \cite{JLauret-Stab3} a novel approach to study this stability under the assumption that a compact Lie group $G$ acts transitively on $M$. 
Let $\lambda_L^G$ be the smallest positive eigenvalue of the Lichnerowicz Laplacian restricted to the space of $G$-invariant TT-tensors.
Given a $G$-invariant Einstein metric $g$, he called it 
\emph{$G$-stable} if $\lambda_L^G>2E$ (this implies that $g$ is a local maximum of $\widetilde\Scal$ restricted to the space $\mathcal M_{v}^G$ of $G$-invariant metrics of fixed volume $v=\op{vol}(g)$), 
\emph{$G$-unstable} if $\lambda_L^G<2E$ (this implies that $g$ is a saddle point or a local minimum of $\widetilde\Scal|_{\mathcal M_{v}^G}$), 
and \emph{neutrally $G$-stable} if $\lambda_L^G=2E$.

Since $\lambda_L\leq \lambda_L^G$, one has that a $G$-unstable Einstein metric is automatically H.unstable, and similarly, an H.stable $G$-invariant Einstein metric is $G$-stable. 
The articles \cite{JLauretWill-Stab2}, \cite{EJlauret-Stab}, \cite{GutierrezLauretJ25}, \cite{GutierrezV} complement \cite{JLauret-Stab3} determining the $G$-stability type of several families of homogeneous Riemannian manifolds.

The study of the H.stability type of irreducible compact symmetric spaces was started by Koiso~\cite{Koiso80}, and recently finished in \cite{SemmelmannWeingart22, Schwahn-symmetric}. 
The first non-symmetric stable Einstein manifold with positive scalar curvature was given in 
~\cite{SchwahnSemmelmannWeingart22}. 
This space is $\op{E}_7/\op{PSO}(8)$, which was shown to be $G$-stable by J.~Lauret and Will in \cite{JLauretWill-Stab2}. 

Although instability is considered by experts a generic property for compact Einstein manifolds with positive scalar curvature, Schwahn~\cite{Schwahn-Lichnerowicz} surprisingly found 112 non-symmetric stable Einstein manifolds with positive scalar curvature among non-symmetric normal homogeneous Einstein manifolds $G/H$ with $G$ simple. 
Note that in this situation, any normal metric on $G/H$ is a positive multiple of the \emph{standard} metric $g_{\st}$ on $G/H$, which is induced by the inner product given by the opposite of the Killing form $\kil_\fg$ of $\fg$. 

The spaces considered by Schwahn consist of (non-symmetric) strongly isotropy irreducible spaces (classified independently by Manturov~\cite{Manturov61a,Manturov61b,Manturov66}, Wolf~\cite{Wolf68}, and Krämer \cite{Kramer}) 
and standard Einstein manifolds $(G/H,g_{\st})$ with $G$ simple and reducible isotropy representation (classified by Wang and Ziller~\cite{WangZiller85}). 
For the latter ones, many of them were known to be $G$-stable from \cite{JLauret-Stab3,JLauretWill-Stab2,EJlauret-Stab}.

We are now in position to discuss the linear stability of Perelman's $\nu$-entropy. 
The corresponding second variation was computed by Cao, Hamilton and Ilmanen~\cite{CaoHamiltonIlmanen} (see also \cite{CaoZhu12}). 
In this case, the $\nu$-stability type of an Einstein metric different from a round sphere is characterized (see \cite[Thm.~1.1]{CaoHe15}) again by $\lambda_L$ (the smallest eigenvalue of the Lichnerowicz Laplacian restricted to TT-tensors) and also by the smallest positive eigenvalue $\lambda_1(M,g)$ of the Laplace-Beltrami operator associated to $(M,g)$ acting on functions of $M$. 

For the purposes of this paper, a closed Einstein manifold $(M,g)$ other than a round sphere with positive Einstein factor $E$ is called:
\begin{itemize}
\item \emph{$\nu$-stable} if $\lambda_L>2E$ and $\lambda_1(M,g)>2E$. 

\item \emph{$\nu$-unstable} if $\lambda_L<2E$ or $\lambda_1(M,g)<2E$. 

\item \emph{neutrally $\nu$-stable} if $\lambda_L\geq2E$ and $\lambda_1(M,g)\geq2E$, and at least one of these inequalities is attained. 

\item \emph{$\nu$-semistable} if $\lambda_L\geq2E$ and $\lambda_1(M,g)\geq2E$. 
\end{itemize} 
Clearly, an H.unstable Einstein manifold (not isometric to a round sphere) is $\nu$-unstable, and a (neutrally) $\nu$-stable Einstein manifold is (neutrally) H.stable. 

The fact $\lambda_1(M,g)<2E$ ensures the existence of \emph{$\nu$-unstable conformal directions}. 
The reason for this name is that these directions are in the tangent space of the space of Riemannian metrics conformal to $(M,g)$.

\begin{remark}\label{rem:spheres}
The decomposition of symmetric $2$-tensors of round spheres are special due to the existence of conformal Killing vector fields (see \cite[Rem.~2.4.2]{Kroencke-thesis}). 
Although they satisfy $\lambda_1(S^d,g_{\text{round}})=\frac{d}{d-1}E<2E$, they are $\nu$-stable in the sense of \cite{CaoHe15}, i.e.\ the second variation of its $\nu$-entropy is semi-negative definite. 
See also the discussion in \cite[\S3.2]{ButtsworthHallgren21}.
\end{remark}

Cao and He~\cite{CaoHe15} determined the $\nu$-stability type of all compact irreducible symmetric spaces. 
Notably, this work was before the H.stability type of these spaces was established, though the unknown cases $(M,g)$ at that time satisfied $\lambda_1(M,g)\leq 2E$, becoming $\nu$-unstable independently of their H.stability. 
Furthermore, in \cite{LauretRodriguez-hearingsymm1} and \cite{LauretRodriguez-hearingsymm2} have been found unstable conformal directions in four non-symmetric homogeneous Einstein manifolds on the underlying smooth manifold of compact irreducible symmetric spaces of rank greater than one.

The main goal of this paper is to study the $\nu$-stability type of standard Einstein manifolds $(G/H,g_{\st})$ with $G$ simple, as a continuation of Schwahn's recent study  \cite{Schwahn-Lichnerowicz} of their H.stability. 
We will estimate (and compute when is possible) the smallest positive Laplace eigenvalue $\lambda_1(G/H,g_{\st})$. 
For non-symmetric strongly isotropy irreducible spaces we obtain the following almost uniform lower bound. 

\begin{theorem}\label{thm4:main}
Let $G/H$ be a strongly non-symmetric isotropy irreducible space. 
Then, the smallest positive eigenvalue of the Laplace-Beltrami operator of the standard metric $g_{\st}$ in $G/H$ satisfies 
$$
\lambda_1(G/H,g_{\st})\geq 1>2E
$$ 
excepting for the spaces $\op{G}_2/\SU(3)$ and $\Spin(7)/\op{G}_2$ which satisfy
\begin{align*}
\lambda_1(\op{G}_2/\SU(3),g_{\st}) &=\frac12 <\frac56=2E,&
\lambda_1(\Spin(7)/\op{G}_2,g_{\st}) &= \frac{21}{40}<\frac{9}{10}=2E. 
\end{align*}
\end{theorem}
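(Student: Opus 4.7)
The plan is to combine Peter--Weyl theory with Frobenius reciprocity to reduce the computation of $\lambda_1(G/H,g_{\st})$ to a Casimir eigenvalue problem, and then to carry out a case-by-case verification over the (finite) Manturov--Wolf--Krämer classification of strongly non-symmetric isotropy irreducible spaces.

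The starting point is the standard spectral description: the decomposition $L^2(G/H)\cong\bigoplus_{\pi\in\widehat G}V_\pi\otimes (V_\pi^*)^H$ as $G$-modules diagonalizes $\Delta_{g_{\st}}$, which acts on each isotypical component by the Casimir eigenvalue of $\pi$ with respect to $-\kil_\fg$. By Freudenthal's formula, if $\pi$ has highest weight $\Lambda$ then this eigenvalue equals $\langle \Lambda,\Lambda+2\rho\rangle$, where $\rho$ is half the sum of the positive roots and the inner product on $\ft^*$ is dual to $-\kil_\fg|_\ft$ (concretely, $\langle\Lambda,\Lambda+2\rho\rangle=\tfrac{1}{2h^\vee}\langle\Lambda,\Lambda+2\rho\rangle_{\mathrm{nor}}$ in the normalization where long roots have squared length $2$). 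Consequently
$$
\lambda_1(G/H,g_{\st})=\min\{\langle\Lambda,\Lambda+2\rho\rangle:\pi\in\widehat G\setminus\{1\},\;V_\pi^H\neq 0\},
$$
so the task is to identify the non-trivial $H$-spherical representation of $G$ with smallest Casimir, which via Frobenius reciprocity amounts to reading off branching rules for $G\downarrow H$.

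On the other hand, the Einstein constants $E$ of the standard metrics on strongly isotropy irreducible spaces are classical data (see \cite{Besse,WangZiller85}) and satisfy $E<\tfrac12$ for every non-symmetric pair in the classification; hence $2E<1$, and the claim $\lambda_1>2E$ reduces (away from the two exceptional cases) to the sharper inequality $\lambda_1\geq 1$. The core work is a case-by-case verification on the Manturov--Wolf--Krämer list: for each pair $(G,H)$ one identifies the non-trivial $H$-spherical irreducible $G$-representations and minimizes their Killing-form Casimirs via Freudenthal's formula. In most cases the minimizer is a fundamental representation (or a small combination thereof) and the minimum already exceeds $1$.

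Only the two small-dimensional pairs $\op{G}_2/\SU(3)$ and $\Spin(7)/\op{G}_2$ violate $\lambda_1\geq 1$. For the former, the $7$-dimensional representation of $\op{G}_2$ contains an $\SU(3)$-fixed vector (since $\mathbb{R}^7\cong\mathbb{R}\oplus\mathbb{C}^3$ as $\SU(3)$-modules) and has Killing-form Casimir $\tfrac{1}{2}$; for the latter, the $8$-dimensional spin representation of $\Spin(7)$ restricts to $\op{G}_2$ as $\mathbb{R}\oplus\mathbb{R}^7$ and has Casimir $\tfrac{21}{40}$. Combined with the tabulated Einstein constants $E=\tfrac{5}{12}$ and $E=\tfrac{9}{20}$, these explicit computations yield the stated strict inequalities $\lambda_1<2E$. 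The principal obstacle is establishing the uniform bound $\lambda_1\geq 1$ across the rest of the list: several infinite families (such as $\SU(pq)/\SU(p)\SU(q)$ and $\SO(n^2-1)/\SU(n)$) require closed-form computation in their parameters, and the Killing form normalization has to be tracked carefully from one ambient simple group to another; once the branching rules and normalizations are in hand, however, the bound reduces to a finite arithmetic check.
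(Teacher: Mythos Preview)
Your outline follows the paper's strategy closely: reduce via the Casimir description of $\Spec(G/H,g_{\st})$ and Freudenthal's formula to checking which irreducible $G$-representations with Casimir $<1$ admit $H$-fixed vectors, then go through the Manturov--Wolf--Kr\"amer list.  The paper makes this efficient by first tabulating (Table~\ref{table:CasimirEigenvalues}) \emph{all} irreducible representations of each simple $\fg_\C$ with Casimir $\leq 1$ --- a short finite list in each type --- and packaging the resulting conditions into Proposition~\ref{prop:sufficient}, so that for every pair $(G,H)$ one only has to rule out $H$-fixed vectors in one, two, or three specific small representations (often just $\pi_{\omega_1}$ or $\pi_{\omega_2}$, handled by Lemmas~\ref{lem3:dimV_pi^H} and \ref{lem3:standard}).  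You allude to this reduction but leave it implicit; it is the point that turns ``minimize over all $H$-spherical representations'' into a routine check.

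There is, however, one genuine gap.  The theorem covers \emph{all} non-symmetric strongly isotropy irreducible spaces, not only the simply connected representatives.  For pairs other than the two exceptions the covering inequality $\lambda_1(\text{cover})\leq\lambda_1(\text{quotient})$ propagates $\lambda_1\geq 1$ downward automatically.  But for $\op{G}_2/\SU(3)$ and $\Spin(7)/\op{G}_2$ this fails, since the simply connected model already has $\lambda_1<1$; yet their proper quotients are \emph{not} listed as exceptions in the statement, so one must verify separately that every strongly isotropy irreducible space properly covered by either of these still satisfies $\lambda_1>2E$.  The paper does this in \S\ref{subsec4:roundspheres}: both spaces are round spheres, and using Wolf's structure theorem one identifies the relevant effective quotients as round real projective spaces, for which $\lambda_1(P^d(\R),g_{\text{round}})=2(d+1)>2(d-1)=2E$.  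Your proposal omits this step entirely.
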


\begin{remark}
The exceptions in Theorem~\ref{thm4:main}, $(\op{G}_2/\SU(3),g_{\st})$ and $(\Spin(7)/\op{G}_2,g_{\st})$, are round spheres which are $\nu$-stable (see Remark~\ref{rem:spheres}).
\end{remark}

\begin{corollary}
There are no $\nu$-unstable conformal directions in every
non-symmetric strongly isotropy irreducible space endowed with its standard metric. 
\end{corollary}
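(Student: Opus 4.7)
My plan is to derive the corollary as an immediate consequence of Theorem~\ref{thm4:main} combined with Remark~\ref{rem:spheres}, split into the generic case and the two exceptional round-sphere cases.

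First I would recall, from the introduction, that a $\nu$-unstable conformal direction on an Einstein manifold $(M,g)$ with Einstein factor $E>0$ is produced precisely by a positive eigenvalue of the Laplace–Beltrami operator strictly smaller than $2E$: an eigenfunction with such an eigenvalue yields a conformal perturbation on which the second variation of $\nu$ has the wrong sign. Ruling out these directions therefore reduces either to bounding $\lambda_1(G/H,g_{\st})$ from below by $2E$, or, in the borderline situations, to analyzing the low-lying eigenfunctions explicitly.

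For any non-symmetric strongly isotropy irreducible space $(G/H, g_{\st})$ distinct from $\op{G}_2/\SU(3)$ and $\Spin(7)/\op{G}_2$, Theorem~\ref{thm4:main} already supplies the uniform bound $\lambda_1(G/H,g_{\st}) \geq 1 > 2E$. Since every positive Laplace eigenvalue is at least $\lambda_1$, no eigenvalue can fall into the interval $(0,2E]$, and the absence of $\nu$-unstable conformal directions follows without further work.

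The main, yet modest, obstacle lies in the two remaining spaces, where Theorem~\ref{thm4:main} explicitly gives $\lambda_1 < 2E$. Here I would invoke Remark~\ref{rem:spheres}: these quotients are isometric to the round spheres $S^6$ and $S^7$, and on a round sphere the first Laplace eigenspace is spanned by the restrictions of coordinate functions, i.e.\ the potentials of conformal Killing vector fields. By the computation of Cao--He recalled in that remark, the second variation of Perelman's $\nu$-entropy is semi-negative definite along the corresponding conformal perturbations, so they do not destabilize $\nu$. Combining the two cases yields the corollary.
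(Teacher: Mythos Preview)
Your proof is correct and follows essentially the same approach as the paper, which states the corollary without proof as an immediate consequence of Theorem~\ref{thm4:main} together with Remark~\ref{rem:spheres}. Your write-up simply makes explicit what the paper leaves implicit: the generic case is handled by the bound $\lambda_1\geq 1>2E$, and the two exceptional cases are round spheres, whose $\nu$-stability (and hence absence of $\nu$-unstable conformal directions) is granted by the remark.
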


We now move to the case when the isotropy representation is reducible. 
Although many of these spaces were shown to be $G$-unstable (c.f.\ \cite{EJlauret-Stab}), and consequently $\nu$-unstable, we estimate its first Laplace eigenvalue for all of them because of its importance on its own. 
Unlike in Theorem~\ref{thm4:main}, we assume that $G/H$ is simply connected in this case.

\begin{theorem}\label{thm5:main}
Let $(G/H,g_{\st})$ be a simply connected standard Einstein manifold with $G$ simple and reducible isotropy representation with Einstein factor $E$. 
Then, the smallest positive eigenvalue of the Laplace-Beltrami operator of the standard metric $g_{\st}$ in $G/H$ satisfies 
$$
\lambda_1(G/H,g_{\st})\geq 1>2E
$$ 
excepting the following cases:
\begin{itemize}
\item $\fg\simeq\spp(n)$, where $\lambda_1(G/H,g_{\st})=\frac{n}{n+1}$. 

\smallskip

\item $G/H=\Spin(8)/\op{G}_2$, where $\lambda_1(G/H,g_{\st})=\frac{7}{12}$. 

\smallskip

\item $G/H=\op{F}_4/\Spin(8)$, where $\lambda_1(G/H,g_{\st})=\frac{2}{3}$. 
\end{itemize}

Moreover, $\lambda_1(G/H,g_{\st})> 2E$ excepting the following cases:
\begin{itemize}
\item $\fg\simeq \spp(3)$ and $\fh\simeq \spp(1)\oplus\spp(1)\oplus \spp(1)$, where $\lambda_1(G/H,g_{\st})=\frac34<\frac78=2E$. 

\smallskip
\item $\fg\simeq \spp(2)$ and $\fh\simeq \spp(1)\oplus\ut(1)$, where $\lambda_1(G/H,g_{\st})=\frac23<\frac56=2E$. 

\smallskip
\item $\fg\simeq \spp(5)$ and $\fh\simeq \spp(2)\oplus\ut(3)$, where $\lambda_1(G/H,g_{\st})=\frac56=2E$. 

\smallskip
\item $G/H=\Spin(8)/\op{G}_2$, where $\lambda_1(G/H,g_{\st})=\frac{7}{12}<\frac{3}{5}=2E$. 

\smallskip
\item $G/H=\op{F}_4/\Spin(8)$, where $\lambda_1(G/H,g_{\st})=\frac{2}{3}<\frac89=2E$. 
\end{itemize} 
\end{theorem}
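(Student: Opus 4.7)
The plan is to use the standard spectral description via Peter--Weyl. By Frobenius reciprocity, for the normal homogeneous space $(G/H,g_{\st})$ one has
\begin{equation*}
L^2(G/H) \simeq \bigoplus_{\pi\in\widehat G,\,V_\pi^H\neq 0} V_\pi\otimes(V_\pi^*)^H,
\end{equation*}
and $\Delta_{g_{\st}}$ acts on the $\pi$-isotypical component by the Casimir scalar $\Cas(\pi)$ computed with respect to $-\kil_\fg$. Under the normalization making $\Cas$ of the adjoint representation equal to $1$, Freudenthal's formula reads $\Cas(V_\lambda)=\langle \lambda,\lambda+2\rho\rangle/\langle \theta,\theta+2\rho\rangle$, where $\theta$ is the highest root of $\fg$. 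Hence the theorem reduces to identifying, for each $G/H$ in the Wang--Ziller classification \cite{WangZiller85}, the minimum Casimir among non-trivial spherical representations of $G$.

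Since $\Cas(\lambda)$ grows with $\|\lambda\|$, for every simple $G$ only finitely many irreducibles satisfy $\Cas\leq 1$, and these can be listed explicitly from Freudenthal's formula. The strategy is to proceed case-by-case along the Wang--Ziller list of standard Einstein manifolds $(G/H,g_{\st})$ with $G$ simple and reducible isotropy, deciding via standard branching rules whether any of the small $V_\lambda$ is spherical with respect to the corresponding $H$. Whenever none is, the uniform bound $\lambda_1\geq 1$ follows, and combined with the Einstein factors $E$ tabulated in \cite{WangZiller85} yields $\lambda_1 > 2E$.

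The one case requiring a uniform treatment is the infinite family $\fg\simeq\spp(n)$. Here the relevant small weights are $\varpi_1$ with $\Cas=(2n+1)/(4(n+1))$, $\varpi_2$ with $\Cas=n/(n+1)$, and the adjoint $2\varpi_1$ with $\Cas=1$. I would verify uniformly in $n$ and in the choice of $H$ among the Wang--Ziller cases: first, that $(\CC^{2n})^H=0$, so that $V_{\varpi_1}$ is not spherical; second, that $\dim(\Lambda^2\CC^{2n})^H\geq 2$, so that $V_{\varpi_2}\subset\Lambda^2\CC^{2n}$ is spherical. The second condition typically holds because the restrictions of the canonical symplectic form on $\CC^{2n}$ to the irreducible $H$-summands give linearly independent $H$-invariant $2$-forms; for the few cases where $\CC^{2n}$ remains $H$-irreducible, sphericity of $V_{\varpi_2}$ must be checked by a separate decomposition of $\Lambda^2\CC^{2n}$ or $\Sym^2\CC^{2n}$. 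This yields the uniform value $\lambda_1=n/(n+1)$. For the two sporadic exceptions, the minimizing spherical representations are the standard $8$-dimensional vector representation of $\Spin(8)$, which is spherical because $\op{G}_2\subset\Spin(7)$ fixes a vector in $\R^8$, and the $26$-dimensional fundamental representation of $\op{F}_4$, whose restriction to $\Spin(8)$ contains a trivial summand by triality, with Casimirs $7/12$ and $2/3$ respectively.

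The main obstacle is the volume of combinatorial case analysis required by the length of the Wang--Ziller list, together with the careful sphericity verification via branching rules. Classical-group cases follow from iterated application of standard branchings such as $\SU(n)\supset\SU(k)\times\SU(n-k)\times\Ut(1)$, $\SO(n)\supset\SO(k)\times\SO(n-k)$, and $\Sp(n)\supset\Sp(k)\times\Sp(n-k)$, while the exceptional $G$ cases rely on tables or computer algebra such as LiE or Sage. Once all sphericity checks are in place, the comparison $\lambda_1$ vs.\ $2E$ reduces to an arithmetic verification that isolates the five exceptional subcases listed in the statement.
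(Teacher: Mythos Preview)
Your proposal is correct and follows essentially the same approach as the paper: reduce via Peter--Weyl to finding the smallest Casimir among spherical representations, list the finitely many irreducibles with $\Cas\leq 1$ for each simple type, and then run the Wang--Ziller classification case-by-case using branching rules (the paper packages this as Proposition~\ref{prop:sufficient} plus Lemmas~\ref{lem3:dimV_pi^H}--\ref{lem3:standard}, with the exceptional branchings read off from LieART/Sage). Your argument for $V_{\varpi_2}^H\neq 0$ in the $\spp$ families via linearly independent restricted symplectic forms is a clean conceptual repackaging of the paper's explicit decompositions in Claims~\ref{claim5:Sp(nk)-dimV_omega2^h=k-1} and~\ref{claim5:Sp(3n-1)-dimV_omega2^h=k-1}, and your identification of the spherical vectors in the $\Spin(8)/\op{G}_2$ and $\op{F}_4/\Spin(8)$ exceptions matches the paper's treatment exactly.
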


Notably, only five spaces of those considered in Theorem~\ref{thm5:main} have $\nu$-unstable conformal directions. 
All of them had already been established as $\nu$-unstable for being $G$-unstable (see \cite[Tables 1--2]{EJlauret-Stab} for the references).

We conclude from Theorems~\ref{thm4:main} and \ref{thm5:main} that each of the 112 H.stable (resp.\ 2 H.semistable, 2 neutrally H.stable) Einstein manifolds found by Paul Schwahn in \cite{Schwahn-Lichnerowicz} becomes $\nu$-stable (resp.\ $\nu$-semistable, neutrally $\nu$-stable). 
We next summarize, among all non-symmetric standard Einstein manifolds $(G/H,g_{\st})$ with $G$ simple, 
how many of them have had their $\nu$-stability type decided: 
\begin{itemize}
\item 
Among the ten infinite families of non-symmetric simply connected strongly isotropy irreducible spaces (see Table~\ref{table4:isotropyirred-families-lambda1}), the $\nu$-stability type is decided for 60 cases, always as $\nu$-stable. 
(At the bottom of page 93 in \cite{Schwahn-Lichnerowicz}, the number 51 corresponding to members of isotropy irreducible families should be 60 due to a miscounting.)

\item 
Among the 33 isolated non-symmetric simply connected isotropy irreducible spaces (see Tables~\ref{table4:isotropyirredexcepcion-classical}--\ref{table4:isotropyirredexcepcion-exceptional}), 
17 are $\nu$-stable,
and 1 is $\nu$-semistable. 

\item 
There are nine infinite families (XI--XIX) of isotropy reducible standard Einstein manifolds $(G/H,g_{\st})$ with $G$ simple (see Table~\ref{table5:isotropyred-families}). 
\begin{itemize}

\item Families XV and XVI contain 20 $\nu$-stable cases in total. 

\item Family XVII splits in Families XVIIa and XVIIb. 
Family XVIIa contains 2 neutrally $\nu$-stable cases. 
Family XVIIb has all its members $\nu$-unstable for being $G$-unstable (cf.~\cite{EJlauret-Stab}).

\item all members of the remaining 6 families are $G$-unstable (cf.~\cite{EJlauret-Stab}) and consequently $\nu$-unstable.
\end{itemize}

\item 
Among the 22 isolated isotropy reducible standard Einstein manifolds $(G/H,g_{\st})$ with $G$ simple (see Table~\ref{table5:isotropyredexcepcions}), 
15 are $\nu$-stable,
1 is $\nu$-semistable,
5 are $\nu$-unstable ($G$-unstable). 
\end{itemize}
The H.stability type was not decided for all the remaining cases.

\begin{remark}
A closed Einstein manifold $(M,g)$ (other than a round sphere) with Einstein factor $E$ is $\nu$-unstable if 
\begin{equation}\label{eq:lambda_L<2Eandlambda_1<2E}
\lambda_L(M,g)<2E
\qquad\text{or}\qquad
\lambda_1(M,g)<2E. 
\end{equation}
A natural question is which of these two conditions is more common. 

One can see from \cite[Tables 1--2]{CaoHe15} (and also from \cite[Thm.~1.1]{SemmelmannWeingart22} and \cite[Thm.~1.2]{Schwahn-symmetric}) that the only simply connected compact irreducible symmetric spaces satisfying exactly one of the conditions in \eqref{eq:lambda_L<2Eandlambda_1<2E} are the following:
\begin{itemize}
\item $\lambda_L\geq 2E$ and $\lambda_1<2E$:
$(\SU(n)\times\SU(n))/\SU(n)$ for $n\geq3$, 
$\SU(2n)/\Sp(n)$ for $n\geq3$,
$\Sp(n+1)/(\Sp(n)\times\Sp(1))$ for $n\geq 2$,
$\op{E}_6/\op{F}_4$, and 
$\op{F}_4/\Spin(9)$.

\item $\lambda_L<2E$ and $\lambda_1\geq 2E$:
$\SO(5)/(\SO(3)\times\SO(2))$, and
$\Sp(n)/\Ut(n)$ for $n\geq3$. 
\end{itemize}

The situation changes significantly among the spaces considered in this paper (standard Einstein manifolds $(G/H,g_{\st})$ with $G$ simple). 
None of them satisfy $\lambda_L\geq 2E$ and $\lambda_1<2E$. 
Furthermore, every H.unstable case in Tables~\ref{table5:isotropyred-families}--\ref{table5:isotropyredexcepcions}, excepting the five cases at the bottom of Theorem~\ref{thm5:main}, satisfies $\lambda_L<2E$ and $\lambda_1\geq 2E$, namely, Families XI--XIV (with $3$ exceptions), XVIIb, XVIII--XIX in Table~\ref{table5:isotropyred-families}, and Isolated cases No.~35, 39, and 43 in Table~\ref{table5:isotropyredexcepcions}. 

We may conclude that, among the cases considered in this article, the condition $\lambda_L<2E$ occurs more often than $\lambda_1<2E$. 
\end{remark}

The $\nu$-stability has a strong relation with the dynamical stability of the Ricci flow. For instance, a $\nu$-unstable Einstein manifold $(M,g)$ is \emph{dynamically unstable} (see \cite[Thm.~1.3]{Kroencke15} and \cite[Cor.~6.2.5]{Kroencke-thesis}), which means that there is an ancient flow $g(t)$, $t\in (-\infty,T]$, such that $g(t)\to g$ as $t\to-\infty$ modulo diffeomorphisms, that is, there exists a family of diffeomorphisms $\varphi_t$, $t\in(-\infty,T]$, such that $\varphi_t^*g(t)\to g$ as $t\to-\infty$.  

Furthermore, a compact shrinking Ricci soliton (e.g. a positive Einstein manifold) $(M,g)$ is said to be \emph{dynamically stable} if for every neighbourhood $U$ of $g$ in the space of Riemannian metrics there exists a smaller neighbourhood $V\subset U$ such that the Ricci flow starting in $V$ stays in $U$ for all $t \geq 0$ and converges to an Einstein metric as $t \to+\infty$. 
Kröncke~\cite[Thm.~1.2]{Kroencke15} showed that a dynamically stable compact shrinking Ricci soliton is necessarily $\nu$-semistable (see also \cite[Thm.~1.5]{Kroencke20}). 
Therefore, the 112 $\nu$-stable positive Einstein manifolds shown in this paper are candidates to be dynamically stable with respect to the Ricci flow. 

\subsection*{Organization of the paper}
Section~\ref{sec:preliminaries} introduces Lie theoretical preliminaries. 
In Section~\ref{sec:spectralpreliminaries}, the spectrum of the Laplace-Beltrami operator of a standard Einstein manifold is presented in terms of representations of compact Lie groups, as well as several conditions to ensure the lower bound $\lambda_1\geq1$. 
Sections~\ref{sec:isotirred} and \ref{sec:isotred} provide the proofs of Theorem~\ref{thm4:main} and \ref{thm5:main} respectively.

\subsection*{Acknowledgments}
 The authors wish to express their thanks to Jorge Lauret and Paul Schwahn for several helpful comments on an earlier version of this article.
The second author would like to express his gratitude to the Department of Mathematics at Universidad Nacional del Sur for its hospitality, where this work was initiated.

\section{Lie theoretical notations and conventions}\label{sec:preliminaries}

This section is intended for quick reference: readers may skim or skip it initially and return to it as needed to clarify any notation or conventions. 
The goal is to facilitate the reading of the subsequent sections. 
For general references, see for instance \cite{Bourbaki-Lie4-6}, \cite{BrockerDieck}, \cite{FultonHarris-book}, \cite{Knapp-book-beyond}, \cite{Sepanski}. 

As usual, Lie groups are denoted by capital letters and their Lie algebras by the corresponding lower case Gothic letter. 

\begin{notation}[Compact Lie group]\label{notation2:G}
Given a compact Lie group $G$, we associate the following objects with it: 
the rank of $G$ is given by $\rank(G)=\dim T$, where $T$ is any maximal torus of $G$; 
the complexification $\fg_\C:=\fg\otimes_\R\C$ of $\fg$;
the Killing form $\kil_{\fg}$ of $\fg$ or $\fg_\C$ depending on the context;
the universal enveloping algebra $\mathcal U(\fg_\C)$ of $\fg_\C$; and
if $\fg$ is semisimple, the Casimir element $\Cas_\fg:= \sum_{i}X_i^2\in\mathcal U(\fg_\C)$, where $\{X_i\}_i$ is any orthonormal basis of $\fg$ with respect to the inner product $-\kil_\fg$. 
\end{notation}

\begin{notation}[Maximal torus]\label{notation2:G-T}
Given a compact semisimple Lie group $G$ and a maximal torus $T$ of $G$, we associate with them: 
the root system $\Phi(\fg_\C,\ft_\C)$ of $\fg_\C$ with respect to the Cartan subalgebra $\ft_\C$ of $\fg_\C$ and
the lattice of \emph{$G$-integral weights} $\PP(G):=\{\mu\in\ft_\C^*: \mu(H)\in2\pi\mi\Z\;\; \forall\, H\in\ft\text{ with }\exp(H)=e\}$ of $G$, 
which is a sublattice of 
the lattice of \emph{algebraically integral weights} $\PP(\fg_\C):=\{\mu\in\ft_\C^*:2\frac{\kil_{\fg}^*(\mu,\alpha)}{\kil_{\fg}^*(\alpha,\alpha)}\in\Z\;\; \forall\,\alpha\in\Phi(\fg_\C,\ft_\C)\}$. 
\end{notation}

\begin{notation}[Extensions of the Killing form] \label{notation2:B^*}
In the situation of Notation~\ref{notation2:G-T}, the real form $\mi\ft$ of $\ft_\C$ is very special. 
On the one hand, $\alpha$ is real-valued on $\mi\ft$ for every $\alpha\in\Phi(\fg_\C,\ft_\C)$. 

On the other hand, $\kil_\fg|_{\mi\ft}$ is positive definite. 
For $\mu\in(\mi\ft)^*\subset\ft_\C^*$, there exists $H_\mu\in \mi \ft$ determined by $\mu(H)=\kil_\fg(H,H_{\mu})$ for all $H\in\mi\ft$. 
We denote by $\kil_\fg^*(\cdot,\cdot)$ the corresponding inner product on $(\mi \ft)^*$ determined by $\kil_\fg$, that is, $\kil_\fg^*(\mu,\nu)=\kil_\fg(H_\mu, H_\nu)$. 
\end{notation}

\begin{notation}[Dominant weights] \label{notation2:dominant}
In the situation of Notation~\ref{notation2:G-T}, we fix a system of simple roots $\Pi(\fg_\C,\ft_\C)=\{\alpha_1,\dots,\alpha_n\}$ ($n=\rank(G)$).
We have the fundamental weights $\omega_1,\dots,\omega_n$ determined by $2\frac{\kil_\fg^*(\omega_i,\alpha_j)} {\kil_\fg^*(\alpha_j,\alpha_j)}=\delta_{i,j}$,
the set of positive roots $\Phi^+(\fg_\C,\ft_\C)$, 
the set of dominant algebraically integral weights $\PP^+(\fg_\C)$, 
the set of dominant $G$-integral weights $\PP^+(G)$, 
and
$\rho_\fg:=\frac12\sum_{\alpha\in\Phi^+(\fg_\C,\ft_\C)} \alpha$. 
\end{notation}

One has that 
	$\rho_\fg=\omega_1+\dots+\omega_n$,
	$\PP(\fg_\C)=\bigoplus_{i=1}^n\Z\, \omega_i$, and
	$\PP^+(\fg_\C)=\bigoplus_{i=1}^n\Z_{\geq0}\, \omega_i$.

\begin{notation}[Representation] \label{notation2:pi}
Given a compact, connected and semisimple Lie group $G$, 
a maximal torus $T$ of $G$, 
and $\pi$ a finite-dimensional complex representation of $G$, 
we denote by $V_\pi$ the underlying vector space of $\pi$ (i.e.\ $\pi:G\to\GL(V_\pi)$ is a continuous morphism of groups),  and $\pi$ also denotes the induced representation of $\fg$, $\fg_\C$, and $\mathcal U(\fg_\C)$. 
$\PP(\pi)$ stands for the set of weights of $\pi$ (i.e.\ the subset of $\PP(G)$ with positive multiplicity in $\pi$), 
the differential of $\pi$ is $d\pi:\fg\to\gl(V_\pi)$ and its complexified differential is $(d\pi)_\C:\fg_\C\to\gl(V_\pi)$.

Let $\widehat G$ denote the unitary dual of $G$, that is, the set of equivalence classes of irreducible unitary representations of $G$. 
Also, $1_G\in\widehat G$ is the trivial representation of $G$.

\end{notation}

\begin{remark}[Highest Weight Theorem]\label{rem2:HighestWeightTheorem}
In the situation of Notation~\ref{notation2:dominant}, the Highest Weight Theorem ensures that $\widehat G$ is in correspondence with $\PP^+(G)$. 
More precisely, denoting by $\Lambda_\pi$ the highest weight of $\pi\in\widehat G$ (i.e.\ the largest element in $\PP(\pi)$ with respect to the order induced by $\Pi(\fg_\C,\ft_\C)$), the map $\pi\mapsto\Lambda_\pi$ is a bijection from $\widehat G$ onto $\PP^+(G)$. 
Similarly, at the Lie algebra level, $\PP^+(\fg_\C)$ is in correspondence with the finite-dimensional irreducible representations of the complex semisimple Lie algebra $\fg_\C$. 
\end{remark}

\begin{table}[!htbp]
\renewcommand{\arraystretch}{1.4}
\caption{Data of irreducible root systems}\label{table:Bourbaki}
$
\begin{array}{cccc}
\hline\hline
\text{Type} &  \text{Fundamental weights} &\rho_\fg&\kil_\fg^*(\ee_i,\ee_j) 
\\ \hline\hline \rule{0pt}{14pt}
A_n &  \omega_i=\pr(\ee_1+\dots+\ee_i) &
\sum\limits_{j=1}^{n+1}\frac{n+2-2j}{2}\ee_j&
\frac{1}{2(n+1)}
\\[3mm]\hline
B_n  &
\begin{array}[t]{l}
\omega_i=\varepsilon_1+ \dots+ \varepsilon_i,\, i\leq n-1,\\
\omega_n=\tfrac12( \varepsilon_1+ \dots+ \varepsilon_{n})
\end{array}
&
\sum\limits_{j=1}^{n}(n-\frac{2j-1}{2})\ee_j&
\frac{1}{2(2n-1)}
\\[3mm]\hline
C_n
&
\omega_i=\ee_1+\dots+\ee_i
&
\sum\limits_{j=1}^{n}(n+1-j)\ee_j&
\frac{1}{4(n+1)}
\\[3mm]\hline
D_n
& 
\text{\small$
\begin{array}[t]{r@{\,}l}
\omega_i&= \varepsilon_1+ \dots+ \varepsilon_i,\;  i\leq n-2,\\
\omega_{n-1}&=\tfrac12( \varepsilon_1+ \dots+ \varepsilon_{n-1}-\varepsilon_{n}),\\
\omega_{n}&=\tfrac12( \varepsilon_1+ \dots+\ee_{n-1}+ \varepsilon_{n})
\end{array}
$}
&
\sum\limits_{j=1}^{n}(n-j)\ee_j&
\frac{1}{4n-4}
\\[3mm]\hline
E_8
&\text{\scriptsize$
\begin{array}[t]{l}
\omega_1=2\ee_8,\quad 
\omega_2=\frac12(\sum\limits_{i=1}^7\ee_i+5\ee_8),\\
\omega_3=\frac12(-\ee_1+\sum\limits_{i=2}^7\ee_i+7\ee_8),\\
\omega_i=\sum\limits_{j=i-1}^7\ee_i+(9-i)\ee_8, \; 4\leq i\leq 8
\end{array}
$}
&
\sum\limits_{i=1}^7 (i-1)\ee_i+23\ee_8
&
\frac{1}{60}
\\[3mm]\hline
\op{E}_7
&\text{\scriptsize $
\begin{array}[t]{l}
\omega_1=\ee_8-\ee_7,\quad 
\omega_2=\frac12(\sum\limits_{i=1}^6\ee_i-2\ee_7+2\ee_8),\\
\omega_3=\frac12(-\ee_1+\sum\limits_{i=2}^6\ee_i-3\ee_7+3\ee_8),\\
\omega_i=\sum\limits_{j=i-1}^6\ee_i+\frac{8-i}{2}(\ee_8-\ee_7), \; 4\leq i\leq 7
\end{array}
$}
&
\begin{array}{l}
\sum\limits_{i=1}^6 (i-1)\ee_i-\frac{17}{2}\ee_7+\frac{17}{2}\ee_8
\end{array}
&
\frac{1}{36}
\\[3mm]\hline
\op{E}_6
&\text{\scriptsize$
\begin{array}[t]{l}
\omega_1=\frac23(\ee_8-\ee_7-\ee_6), \quad 
\omega_2=\frac12\big( 
	\sum\limits_{i=1}^5\ee_i
	-\ee_6-\ee_7+\ee_8\big),\\
\omega_3=\frac{5}{6}(\ee_8-\ee_7-\ee_6) 	+\frac12(-\ee_1+\sum\limits_{i=2}^5\ee_i),\\
\omega_i=\frac{7-i}{3}(\ee_8-\ee_7-\ee_6)
+ \sum\limits_{j=i-1}^5\ee_i, \; 4\leq i\leq 6
\end{array}
$}
&
\begin{array}[t]{l}
\sum\limits_{i=1}^5 (i-1)\ee_i\\+4(\ee_8-\ee_7-\ee_6)
\end{array}
&
\frac{1}{24}
\\[3mm]\hline
F_4
&
\begin{array}[t]{l}
\omega_1=\ee_1+\ee_2,\quad 
\omega_2=2\ee_1+\ee_2+\ee_3,\\
\omega_3=\tfrac12(3\ee_1+\ee_2+\ee_3+\ee_4),\quad 
\omega_4=\ee_1
\end{array}
&
11\ee_1+5\ee_2+3\ee_3+\ee_4&
\frac{1}{18}
\\[3mm]\hline
G_2
&
\omega_1=-\varepsilon_2+\varepsilon_3,\quad 
\omega_2=-\varepsilon_1-\varepsilon_2+2\varepsilon_3
&
	-\ee_1-2\ee_2+3\ee_3&
\frac{1}{24}
\\ \hline 
\end{array}$
\end{table}

\begin{notation}[Irreducible root systems]\label{notation2:Bourbaki}

The simply connected compact simple Lie groups of classical types $A_n$, $B_n$, $C_n$, $D_n$ are respectively $\SU(n+1)$, $\Spin(2n+1)$, $\Sp(n)$, and $\Spin(2n)$. 
For the exceptional cases, we denote by $\op{E}_n$,  $\op{F}_4$, and $\op{G}_2$ the simply connected compact simple Lie groups of types $E_n,F_4$ and $G_2$, respectively ($n=6,7,8$), and by $\fe_n,\ff_4,\fg_2$ their corresponding Lie algebras.

Let $\fg_\C$ be the complexified (simple) Lie algebra of a Lie group as above, and let $\ft_\C$ be a Cartan subalgebra of $\fg_\C$. 
We take a basis $\ee_1,\ee_2,\dots$ of $(\mi\ft)^*$ (or sometimes of some extension) as in Bourbaki's book \cite[\S{}VI.4]{Bourbaki-Lie4-6}, as well as the particular ordering of simple roots. 
The corresponding fundamental weights are as in Table~\ref{table:Bourbaki}. 

Only for type $A_n$, we define the projection $\pr:\bigoplus_{i=1}^{n+1}\R\ee_i\to
(\mi\ft)^* = \{\sum_{i=1}^{n+1}a_i\ee_i\in \bigoplus_{i=1}^{n+1}\R\ee_i: \sum_{i=1}^{n+1}a_i=0\}$ given by 
\begin{equation*}
\pr\left(\sum_{i=1}^{n+1}a_i\ee_i \right)= \sum_{i=1}^{n+1}a_i \ee_i-\frac{a_1+\dots+a_{n+1}}{n+1} (\ee_1+\dots+\ee_{n+1}). 
\end{equation*}
\end{notation}

\begin{remark}[Integral weight lattices]\label{rem2:integralweightlattice}
In the situation of Notation~\ref{notation2:G-T} (so $\fg$ is a compact semisimple Lie algebra), let $G_{sc}$ denote the unique (up to isomorphism) connected and simply connected Lie group with Lie algebra $\fg$, which is compact. 
Connected Lie groups having Lie algebra $\fg$ are in correspondence with subgroups of the center $Z=Z(G_{sc})$ (which is finite and abelian) of $G_{sc}$ as follows: 
given $\Gamma$ a subgroup of $Z$, $G_{\Gamma}:=G_{sc}/\Gamma$ is a connected Lie group with Lie algebra $\fg$ and fundamental group isomorphic to $\Gamma$. 

At one extreme we have that $\PP(G_{sc})=\PP(\fg_\C)$. 
At the other, the integral weight lattice of the (centerless) compact Lie group $G_Z=G_{sc}/Z$ (called the \emph{adjoint group} of $\fg$) coincides with the \emph{root lattice} $\RR(\fg_\C):=\Span_\Z( \Phi(\fg_\C,\ft_\C))$.
One has that $\PP(\fg_\C)/\RR(\fg_\C)\simeq Z$. 

In general, given $\Gamma$ a subgroup of $Z$, the weight integral lattice $\PP(G_\Gamma)$ of $G_\Gamma$ satisfies 
$\RR(\fg_\C)\subset \PP(G_\Gamma)\subset \PP(\fg_\C)$,  
$\PP(\fg_\C)/\PP(G_\Gamma)\simeq \Gamma$ and
$\PP(G_\Gamma)/\RR(\fg_\C)\simeq Z/\Gamma$.

From now on we assume that $\fg_\C$ is simple of rank $n$. 
Let $V$ be the extension of $(\mi\ft)^*$ mentioned in Notation~\ref{notation2:Bourbaki}, i.e.\
$V=(\mi\ft)^*$ for types $B_n,C_n,D_n,F_4$, 
$V=\bigoplus_{i=1}^{n+1}\R\ee_i$ for type $A_n$, 
$V=\bigoplus_{i=1}^{8}\R\ee_i$ for type $E_n$ with $n=6,7,8$, and
$V=\bigoplus_{i=1}^{3}\R\ee_i$ for type $G_2$. 
When $\dim V=n$, we identify $\Z^n=\bigoplus_{i=1}^n\Z\ee_i$. 
Let $\innerdots$ denote the inner product on $V$ with orthonormal basis $\{\ee_1,\ee_2,\dots\}$. 
Given a (full) lattice $L$ on $V$, we set $L^*=\{\mu\in V: \inner{\mu}{\nu}\in \Z\;\;\forall\,\nu\in L\}$, the dual lattice of $L$. 
For instance, $(\Z^n)^*=\Z^n$, and the root lattice 
\begin{equation}
\DD_n:=\RR(\so(2n)_\C)=
\left\{{\textstyle \sum\limits_{i=1}^{n}}a_i\ee_i: a_i\in\Z\;\forall \,i,\; \; {\textstyle \sum\limits_{i=1}^{n}} a_i\text{ is even}\right\}
\end{equation}
has dual lattice $\DD_n^* =\Span_\Z(\Z^n,\tfrac12(\ee_1+\dots+\ee_n))$.

We next describe the integral weight lattice of all compact connected simple Lie groups. 

If $\fg_\C$ is of type $E_8$, $Z\simeq\{e\}$, and $\PP(\op{E}_8)=\RR(\fg_\C)=:\EE_8$. 
One can check that $\EE_8^*=\EE_8=\Span(\DD_8,\tfrac12(\ee_1+\dots+\ee_8))$.

If $\fg_\C$ is of type $E_7$, $Z\simeq\Z_2$,  $\EE_7:=\PP(\op{E}_7/\Z_2)=\RR(\fg_\C)=\EE_8\cap \{\ee_7+\ee_8\}^\perp$, and $\PP(\op{E}_7)=\EE_7^*$. 

If $\fg_\C$ is of type $E_6$, $Z\simeq\Z_3$, 
$\EE_6:=\PP(\op{E}_6/\Z_3)=\RR(\fg_\C)=\EE_8\cap\{\ee_7+\ee_8,\, \ee_6+\ee_7+2\ee_8\}^\perp$, and 
$\PP(\op{E}_6)=\EE_6^*$. 

If $\fg_\C$ is of type $F_4$, $Z\simeq\{e\}$, and $\PP(\op{F}_4)=\DD_4^*$. 

If $\fg_\C$ is of type $G_2$, $Z\simeq\{e\}$, and $\PP(\op{G}_2)=\{\sum_{i=1}^{3}a_i\ee_i: a_i\in\Z\;\;\forall\,i,\, \sum_{i=1}^3 a_i=0\}=:\AA_2$.

If $\fg_\C$ is of type $C_n$, $G_{sc}=\Sp(n)$, $Z=\{\pm\Id\}\simeq\Z_2$, 
$\PP(\Sp(n)/\Z_2)=\RR(\fg_\C)=\DD_n$, and 
$\PP(\Sp(n))=\Z^n$. 

If $\fg_\C$ is of type $B_n$, $G_{sc}=\Spin(2n+1)$, $Z=\{\pm1\}\simeq\Z_2$, $\Spin(2n+1)/Z\simeq\SO(2n+1)$, 
$\PP(\SO(2n+1))=\RR(\fg_\C)=\Z^n$, and 
$\PP(\Spin(2n+1))=\DD_n^*$.

Suppose that $\fg_\C$ is of type $D_n$, thus $G_{sc}=\Spin(2n)$.
There is an element $z\in\Spin(2n)$ ($z=e_1\dots e_{2n}$ in the notation of \cite[Ex.~1.35]{Sepanski}) such that $Z=\{\pm1,\pm z\}$ and $z^2=(-1)^n$.
Hence $Z=\langle z\rangle\simeq \Z_4$ if $n$ is odd and $Z=\langle -1\rangle\oplus\langle z\rangle\simeq\Z_2\oplus\Z_2$ if $n$ is even.  
One has $\Spin(2n)/\{\pm1\}\simeq\SO(2n)$, 
$\PP(\Spin(2n)/Z)=\RR(\fg_\C)=\DD_n$,
$\PP(\Spin(2n))=\DD_n^*$, and
$\PP(\SO(2n))=\Z^n$. 
In addition, if $n$ is even, $\PP(\Spin(2n)/\{1,\pm z\}) = \Span_\Z(\DD_n,\tfrac12(\ee_1+\dots+\ee_{n-1}\pm\ee_n) )=:\DD_n^{\pm}$.

We now suppose that $\fg_\C$ of type $A_n$. 
For future convenience, we write $N=n+1$, thus $\fg_{\C}$ is of type $A_{N-1}$ and $G_{sc}=\SU(N)$. 
We have $\AA_{N-1}:=\PP(\SU(N)/\Z_N)= \RR(\fg_\C)=
\{\sum_{i=1}^{N}a_i\ee_i: a_i\in\Z\;\;\forall\,i,\, \sum_{i=1}^N a_i=0\}$ and $\PP(\SU(N))=\AA_{N-1}^*=\Span_\Z(\AA_{N-1}, \pr(\ee_1))$.
The rest of the integral weight lattices are more delicate. 

The center of $\SU(N)$ is 
$
Z=\big\{e^{\frac{2h\pi\mi}{N}}\Id:0\leq h<N \big\}
\simeq\Z_{N}
.
$
For each divisor $d$ of $N$, we set 
\begin{equation}
\Gamma_d= \big\{e^{\frac{2h\pi\mi}{d}}\Id_N:0\leq h<d \big\}
\simeq\Z_{d}
, 
\end{equation}
the only subgroup of $Z$ of order $d$. 
We have that 
\begin{equation}\label{eq:A_N[d]}
\PP(\SU(N)/\Gamma_d)=
\bigcup_{{1\leq p\leq N \,:\, d\mid p}} (\omega_p+\AA_{N-1})
,
\end{equation}
where $\omega_1,\dots,\omega_{N-1}$ are the fundamental weights of type $A_{N-1}$ shown in Table~\ref{table:Bourbaki} and $\omega_N=0$. 
(It is a simple matter to check that the right-hand side of \eqref{eq:A_N[d]} is a lattice.)
\end{remark}

\begin{notation}[Closed subgroup] \label{notation2:G-H}
Given a closed subgroup $H$ of a compact semisimple Lie group $G$, we associate the following objects with it:
$\fp$ denotes the orthogonal complement of $\fh$ in $\fg$ with respect to $\kil_\fg$ (see Notation~\ref{notation2:G}), so
$\fg=\fh{\oplus} \fp$
and $\fp$ is naturally identified with $T_{eH}G/H$; 
the spherical representations of the pair $(G,H)$ are $\widehat G_H= \{\pi\in\widehat G: \dim V_\pi^H> 0\}$.

There is a maximal torus $T$ of $G$ such that $T_H:=T\cap H$ is a maximal torus of $H$. 
We will always request this property to $T$ in this situation. 

The next table describes the notation used for each group in irreducible representations, the highest weights, the fundamental weights, and the basis of $(\mi\ft)^*$ or $\big(\mi(\ft\cap\fh)\big)^*$ (or some extension) as in Notation~\ref{notation2:Bourbaki}:
\begin{equation*}
\begin{array}{ccccc}
\text{group} &\text{irred.\ rep.} & \text{highest weight} & \text{fund.\ weights} & 
\text{Not.~\ref{notation2:Bourbaki}}
\\ \hline
G & \pi & \Lambda & \omega_i & \ee_i\\
H & \sigma & \zeta & \eta_i & \xi_i\\
\end{array}
.
\end{equation*}

When $\fh$ is semisimple, we will use prime symbols to designate the second factor, double prime symbols to designate the third one, and so on. 
For instance, when $\fh=\fh_1\oplus\fh_2$ is the decomposition of $\fh$ in simple ideals, 
any irreducible representation of $\fh$ is of the form $\sigma\widehat\otimes \,\sigma'$ with $\sigma$ an irreducible representation of $\fh_1$ and $\sigma'$ an irreducible representation of $\fh_2$.
Similarly, 
the highest weights are of the form $\zeta+\zeta'$, 
the fundamental weights are $\eta_i,\eta_j'$, 
and the basis of $\big(\mi (\ft\cap\fh)\big)^*=\big(\mi(\ft\cap\fh_1)\big)^*\oplus \big(\mi(\ft\cap\fh_2)\big)^*$ is $\xi_i,\xi_j'$. 

The notation $\pi_{\Lambda}$ (resp.\ $\sigma_\zeta$) will be used for irreducible representations of the compact Lie group $G$ (resp.\ $H$) as well as for $\fg_\C$ (resp.\ $\fh_\C$). 
\end{notation}

In the situation of Notation~\ref{notation2:G-H}, 
it will be useful the identity 
\begin{equation}\label{eq2:dual}
\dim V_\pi^H= \dim V_{\pi^*}^H
\qquad\forall\,\pi\in\widehat G. 
\end{equation}
Here and subsequently, $\pi^*$ stands for the dual (or contragradient) representation of $\pi$.

\begin{notation}[Embedding] \label{notation2:embedding}
Let $H$ be a closed subgroup of a compact Lie group $G$. 
Several times, the subgroup $H$ will be the image of a Lie group homomorphism $\rho:H'\to H\subset G$ from an abstract compact Lie group $H'$, such that the complexified differential $(d\rho)_\C :\fh_\C'\to\gl(V_\rho)$ is a finite-dimensional representation of $\fh_\C'$. 
Although we have 
$$
d\rho(\fh')=\fh\subset \fg\subset \gl(V_\rho),
$$
the analogous inclusions at the Lie group level are not valid in general. 
In other words, $G$ will not be necessarily a subgroup of $\GL(V_\rho)$. 
In this case, we will always take a maximal torus $T'$ of $H'$ such that $d\rho(\ft')=\ft\cap\fh$.
\end{notation}

\begin{remark}[Representations of real, complex and quaternionic types]\label{rem2:types-of-representations}
Let $H'$ be a compact Lie group and let $\rho:H'\to \GL(V_\rho)$ be a complex finite-dimensional representation of $H'$ of dimension $N$. 
Since $H'$ is compact, $V_\rho$ admits an $H'$-invariant complex inner product $\innerdots$. 
One can always choose one of them such that 
$$
\rho(H')\subset \SU(V_\rho)\simeq\SU(N). 
$$ 

A \emph{real structure} (resp.\ \emph{quaternionic structure}) on $\rho$ is a conjugate-linear $H'$-map $\mathcal J:V_\rho\to V_\rho$ such that $\mathcal J^2=\Id_{V_\rho}$ (resp.\ $\mathcal J^2=-\Id_{V_\rho}$). 
If $\rho^*\not\simeq\rho$, then $\rho\oplus\rho^*$ admits real and quaternionic structures simultaneously. 
\begin{itemize}
\item $\rho\in\widehat{H'}$ is said to be of \emph{real type} if it admits a real structure $\mathcal J$. 
We denote by $V_{\rho}^\pm$ the eigenspace of $\mathcal J$ with eigenvalue $\pm1$. $V_{\rho}^\pm$ is a ${\rho}(H')$-invariant real subspace of $V_{\rho}$ of dimension $N$, $\innerdots|_{V_\rho^\pm}$ is a real inner product on $V_\rho^\pm$, and $V_{\rho} =V_{\rho}^+\oplus V_{\rho}^-$. 
Thus 
$$
{\rho}(H')\subset \SO(V_{\rho}^+)\simeq\SO(N)
.
$$

\item $\rho\in\widehat{H'}$ is said to be of \emph{quaternionic type} if it admits a quaternionic structure $\mathcal J$. 
Then $N$ is even and
$$
{\rho}(H')\subset \Sp(V_{\rho},\mathcal J)=\{T\in \SU(V_{\rho}):T^t\mathcal JT=\mathcal J \}\simeq \Sp(\tfrac{N}{2})
.
$$

\item $\rho\in\widehat{H'}$ is said to be of \emph{complex type} if it is not of real or quaternionic type. 
\end{itemize}
Any $\rho\in\widehat{H'}$ is either of real, or quaternionic, or complex type. 
Furthermore, 
$\rho$ is of complex type if and only if $\rho\not\simeq\rho^*$.

Suppose $\fh'\simeq\fh_1'\oplus\fh_2'$ with $\fh_1',\fh_2'$ simple. 
If $\rho_i$ is a finite-dimensional irreducible representation of $(\fh_i')_\C$ for each $i=1,2$, then the irreducible representation $\rho_1\widehat\otimes\rho_2$ of $\fh_\C'$ is 
of complex type if $\rho_1$ or $\rho_2$ is of complex type, 
of real type if $\rho_1,\rho_2$ are both simultaneously of real or quaternionic type, and 
of quaternionic type if $\rho_i^*\simeq\rho_i$ for $i=1,2$, and $\rho_1,\rho_2$ are of different type (i.e.\ one is of real type and the other of quaternionic type). 
This can be easily extended to an arbitrary number of simple factors. 

For more information about representations of real, complex and quaternionic type, see \cite[\S{}II.6]{BrockerDieck} or \cite[\S26.3]{FultonHarris-book}.
\end{remark}

\begin{remark}[Exterior and symmetric product]

Let $\fl$ be an arbitrary compact Lie algebra.
Given $\theta$ a finite-dimensional representation of $\fl_\C$, we denote by $\bigwedge^p(\theta)$  (resp.\ $\Sym^k(\theta)$) the $p$-exterior (resp.\ the $k$-symmetric) power of $\theta$, which has underlying vector space equal to $\bigwedge^p (V_\theta)$ (resp.\ $\Sym^k(V_\rho)$), for any integer $0\leq p\leq \dim V_\theta$ (resp.\ $k\geq0$). 
Clearly we have that $\Lambda^0(\theta) =\Sym^0(\theta) =1_{\fl_\C}$ and $\Lambda^1(\theta) =\Sym^1(\theta) =\theta$.  

Given finite-dimensional representations $\theta_1,\theta_2$ of $\fl$, we have that
\begin{equation}
\label{eq2:Lambda^2(V+W)-Sym^2(V+W)}
\begin{aligned}
{\textstyle \bigwedge^2}(\theta_1\oplus \theta_2) &
\simeq {\textstyle \bigwedge^2}(\theta_1)
\oplus {\textstyle \bigwedge^2}(\theta_1)
\oplus \theta_1\otimes \,\theta_2
,
\\
\Sym^2(\theta_1\oplus \theta_2) &
\simeq \Sym^2(\theta_1)
\oplus \Sym^2(\theta_1)
\oplus \theta_1\otimes \,\theta_2
,
\end{aligned}
\end{equation}
as $\fl_\C$-modules.

Now, let $\fl_1, \fl_2$ be arbitrary compact Lie algebras and $\tau_1,\tau_2$ finite-dimensional representations of $(\fl_1)_\C,(\fl_2)_\C$ respectively. 
It is well known that
\begin{equation}
\label{eq2:Lambda^2(VxW)-Sym^2(VxW)}
\begin{aligned}
{\textstyle \bigwedge^2}(\tau_1\widehat\otimes \, \tau_2) &
\simeq {\textstyle \bigwedge^2}(\tau_1) \widehat\otimes \, \Sym^2(\tau_2)
\oplus \Sym^2(\tau_1) \widehat\otimes \, {\textstyle \bigwedge^2}(\tau_2)
,
\\
\Sym^2(\tau_1\widehat\otimes \, \tau_2) &
\simeq \Sym^2(\tau_1) \widehat\otimes \, \Sym^2(\tau_2)
\oplus {\textstyle \bigwedge^2}(\tau_1) \widehat\otimes \, {\textstyle \bigwedge^2}(\tau_2)
,
\end{aligned}
\end{equation}
\end{remark}

\section{Spectral estimates for standard manifolds}
\label{sec:spectralpreliminaries}

This section describes the spectrum of the Laplace-Beltrami operator associated to a standard manifold and it provides several estimates of its smallest positive eigenvalue in terms of representation theory of compact Lie groups.

\subsection{Spectra of standard homogeneous manifolds}

A Riemannian manifold $(M,g)$ is called \emph{homogeneous} if its isometry group acts transitively on $M$. If there is an action by isometries of a Lie group $G$ on $M$ that is transitive, then $(M,g)$ is called \emph{$G$-homogeneous}. 
In this case, $M$ is diffeomorphic to $G/H$, where $H$ is the isotropy subgroup of the $G$-action at some element in $M$, and $(M,g)$ is isometric to $(G/H,g')$ for some $G$-invariant metric $g'$ on $M$, that is, $g'_{abH}\big(dL_a(v), dL_a(w)\big) = g'_{bH}\big(v,w\big)$ for every $v,w\in T_{bH}G/H$ and $a,b\in G$. 

A $G$-homogeneous Riemannian manifold $(G/H,g)$ (here, $g$ is $G$-invariant) is called \emph{standard} if the inner product $g_{eH}$ on the tangent space at the point $eH$ coincides with $-\kil_{\fg}|_{\fp}$ (see Notations~\ref{notation2:G} and \ref{notation2:G-H}). 
Note that $G$ must be compact and semisimple in order $-\kil_{\fg}|_{\fp}$ is positive definite (see \cite[\S7.G]{Besse} for further details). 
In addition, we assume that $G$ is connected. 

Every Riemannian manifold has a distinguished differential operator which is called the \emph{Laplace-Beltrami operator}. 
We next describe its spectrum on a standard Riemannian manifold $(G/H,g_{\st})$.

Since $\Cas_\fg$ (see Notation~\ref{notation2:G}) is in the center of $\mathcal U(\fg_\C)$, $(d\pi)_\C(\Cas_\fg)$ commutes with $\pi(a)$ for all $a\in G$. 
Schur's Lemma implies that $(d\pi)_\C(\Cas_\fg)$ acts on $V_\pi$ by an scalar $\lambda^\pi$, for any $\pi\in\widehat G$, that is,
\begin{equation*}
(d\pi)_\C(\Cas_\fg)\cdot v=\lambda^\pi\, v
\qquad\forall\, v\in V_\pi.
\end{equation*}  
Moreover, Freudenthal's formula yields that (see Notations~\ref{notation2:G-T} and \ref{notation2:B^*}) 
\begin{equation}\label{eq3:Freudenthal}
\lambda^\pi=\kil_\fg^*(\Lambda_\pi,\Lambda_\pi+2\rho_\fg)
.
\end{equation}

\begin{theorem}\label{thm:Spec(standard)}
Let $(G/H,g_{\st})$ be a connected standard homogeneous space. 
The spectrum of its associated Laplace-Beltrami operator is given by 
$$
\Spec(G/H,g_{\st})
:=\Big\{\!\!\Big\{ 
	\underbrace{\lambda^\pi,\dots,\lambda^\pi}_{d_\pi\times d_\pi^H\text{-times} } 
	: \pi\in\widehat G_H 
\Big\}\!\!\Big\}
,
$$
where $d_\pi=\dim V_\pi$ and $d_\pi^H=\dim V_\pi^H$. 
In particular, the smallest positive eigenvalue is given by 
$$
\lambda_1(G/H,g_{\st})
=\min \left\{\lambda^\pi: \pi\in\widehat G_H\smallsetminus\{1_G\}\right\}
.
$$
\end{theorem}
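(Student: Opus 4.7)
The plan is to combine the Peter--Weyl / Frobenius reciprocity description of $L^2(G/H)$ with the classical fact that on a standard homogeneous manifold the Laplace--Beltrami operator coincides with the action of the Casimir element $\Cas_\fg$ through the left-regular representation of $G$. The proof naturally breaks into three steps: decompose $L^2(G/H)$, identify $\Delta_{g_{\st}}$ with $\Cas_\fg$, then diagonalise on each isotypic component using Schur's Lemma and Freudenthal's formula.

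First I would identify $C^\infty(G/H)$ with the right-$H$-invariant smooth functions on $G$, and view the induced inclusion $L^2(G/H)\hookrightarrow L^2(G)$ as an embedding of left $G$-modules. Peter--Weyl yields
$$L^2(G)\cong \bigoplus_{\pi\in\widehat G} V_\pi\otimes V_{\pi^*}$$
as a unitary left $G$-module (the left action being $\pi(g)\otimes\Id$ on each summand), and taking right-$H$-invariants on the second tensor factor gives
$$L^2(G/H)\cong \bigoplus_{\pi\in\widehat G_H} V_\pi\otimes (V_{\pi^*})^H.$$
By \eqref{eq2:dual} the $\pi$-isotypic component therefore has complex dimension $d_\pi\cdot d_\pi^H$.

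The main technical step is to identify $\Delta_{g_{\st}}$ with $\Cas_\fg$. I would pick an orthonormal basis $\{X_i\}$ of $\fg$ with respect to $-\kil_\fg$ that is adapted to the $\kil_\fg$-orthogonal splitting $\fg=\fh\oplus\fp$. Since $g_{\st}|_\fp=-\kil_\fg|_\fp$ and $-\kil_\fg$ is bi-invariant on all of $\fg$, the standard formula for the Levi--Civita connection of a normal metric (see, e.g.\ \cite[\S7.G]{Besse}) shows that for any $f\in C^\infty(G/H)$, lifted to a right-$H$-invariant function on $G$, the basis vectors from $\fh$ annihilate $f$ while the basis vectors from $\fp$ reproduce the Laplacian of $g_{\st}$. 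Hence, on $L^2(G/H)$, $\Delta_{g_{\st}}$ coincides (up to sign) with $\sum_i X_i^2=\Cas_\fg$ acting through the left-regular representation.

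Finally, on the isotypic summand $V_\pi\otimes (V_{\pi^*})^H$ the Lie algebra $\fg$ acts as $(d\pi)_\C\otimes\Id$, so by Schur's Lemma $\Cas_\fg$ acts by the scalar $\lambda^\pi=\kil_\fg^*(\Lambda_\pi,\Lambda_\pi+2\rho_\fg)$ given by Freudenthal's formula \eqref{eq3:Freudenthal}. Collecting the contributions of all spherical representations yields the claimed spectrum with multiplicities; isolating the unique summand with eigenvalue zero (namely $\pi=1_G$, with multiplicity one since $G$ is connected) produces $\lambda_1(G/H,g_{\st})=\min\{\lambda^\pi:\pi\in\widehat G_H\smallsetminus\{1_G\}\}$. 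The hardest part I anticipate is the middle step: one must verify carefully that the $\fh$-contribution to $\Cas_\fg$ vanishes on right-$H$-invariant functions and get the sign conventions right, a verification that relies crucially on the bi-invariance of $-\kil_\fg$ as an inner product on all of $\fg$.
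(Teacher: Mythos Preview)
Your proposal is correct and follows exactly the standard argument (Peter--Weyl decomposition of $L^2(G/H)$, identification of $\Delta_{g_{\st}}$ with the Casimir, diagonalisation by Schur's Lemma) that the paper invokes; note that the paper does not actually give a proof but simply cites \cite[\S5.6]{Wallach-book} and \cite[\S2]{BLPhomospheres}, where precisely the argument you sketched is carried out.
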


This result is well known; see for instance \cite[\S5.6]{Wallach-book}, or \cite[\S2]{BLPhomospheres} for a more general situation. 
The second assertion follows immediately from the first one since, for any $\pi\in\widehat G$, $\lambda^\pi>0$ if and only if $\pi\not\simeq1_G$.

\subsection{Lower bounds for the first eigenvalue} 
A Riemannian manifold $(M,g)$ is called \emph{Einstein} if its Ricci curvature is a multiple of the metric $g$, that is, $\op{Rc}(g)=Eg$ for some $E\in\R$. 
Wang and Ziller~\cite[Cor.~1.6]{WangZiller85} proved that the \emph{Einstein constant} $E$ of a standard Einstein manifold $(G/H,g_{\st})$  satisfies 
\begin{equation}\label{eq:1/4<E<1/2}
\frac12\leq  2E\leq 1.
\end{equation}
Moreover, $2E=1$ if and only if $G/H$ is locally symmetric, and similarly, $2E=\frac12$ if and only if $H=\{1\}$. 

Our main interest is to decide whether $\lambda_1(G/H,g_{\st})>2E$ holds or not for each non-symmetric standard Einstein manifold $(G/H,g_{\st})$ with $G$ simple. 
It follows immediately from Theorem~\ref{thm:Spec(standard)} the following sufficient condition (via \eqref{eq:1/4<E<1/2}) for a standard Einstein manifold $(G/H,g_{\st})$ that is not locally symmetric:
\begin{equation}\label{eq:sufficientcondition}
\text{If $V_{\pi}^{H}=0$ for all $\pi\in\widehat G\smallsetminus\{1_G\}$ satisfying $\lambda^\pi<1$, then  $\lambda_1(G/H,g_{\st})\geq 1$. }
\end{equation}

Since $G$ will be always simple in our cases of study, it will be useful to identify which are the dominant algebraically integral weights $\Lambda\in\PP^+(\fg_\C)$ satisfying $\lambda^{\pi_{\Lambda}} \leq 1$, where $\fg_\C$ is a complex simple Lie algebra. 
This was done in \cite[\S3]{SemmelmannWeingart22} and we reproduce it in Table~\ref{table:CasimirEigenvalues}.  
(See Table~\ref{table:Bourbaki} and Notation~\ref{notation2:Bourbaki} for the ordering of the fundamental weights chosen for each type.)

\begin{table}
	
	\renewcommand{\arraystretch}{1.17}
	
	\caption{Irreducible representations $\pi$ of complex simple Lie algebras with $\lambda^{\pi }\leq1$.} \label{table:CasimirEigenvalues}
	
	\centering
	\begin{tabular}[t]{c c c c c} 
		\hline
		\hline 
		Type  & $\Lambda_\pi$ & $\pi$ & $\lambda^\pi$  & Constraints \\ [0.5ex] 
		\hline\hline
		$A_m$
		& 0                   & $\C$             & 0                                          & $m\geq 2$     \\
		& $\omega_1$          & $\St$            & $\frac{(m-1)(m+1)}{2m^2}$      & $m\geq 2$  \\ 
		& $\omega_{m-1}$      & $\St^*$          & $\frac{(m-1)(m+1)}{2m^2}$      & $m\geq 3$  \\
		& $\omega_2$          & $\bigwedge^2\St$   & $\frac{(m-2)(m+1)}{m^2}$   & $m\geq 4$  \\
		& $\omega_{m-2}$      & $\bigwedge^2 \St^*$& $\frac{(m-2)(m+1)}{m^2}$   & $m\geq 5$  \\
		& $\omega_3$          & $\bigwedge^3 \St$  & $\frac{7}{8},\;\; \frac{48}{49}$ & $m=6,7$    \\
		& $\omega_4$          & $\bigwedge^3 \St^*$& $\frac{48}{49}$                  & $m=7$      \\
		& $\omega_1+\omega_{m-1}$ & $\Ad$ 
		& $1$     & $m\geq 2$   \\
		\hline 
		$B_m$
		& 0          &  $\C$                          & 0
		& $m\geq 1$ \\
		& $\omega_1$ & $\St$                        & $\frac{m}{2m-1}$       & $m\geq 1$ \\
		& $\omega_2$ & $\Ad$ 
		& 1 
		& $m\geq 2$ \\
		& $\omega_m$ & $\Sigma_{2m+1}$                     & $\frac{m(2m+1)}{8(2m-1)}$ 
		& $1\leq m\leq 6$ \\
		\hline 
		
		$C_m$
		& 0          &   $\C$                           & 0
		& $m\geq 1$  \\
		& $\omega_1$ &  $\St$                          
		& $\frac{2m+1}{4(m+1)}$ 
		& $m\geq 1$ \\
		& $\omega_2$ & $\bigwedge^2_\circ \, \St$      &  $\frac{m}{m+1}$      & $m\geq 2$ \\
		& $\omega_3$ & $\bigwedge^3_\circ \, \St$      & $\frac{15}{16}$            & $m=3$ \\
		& $2\omega_1$& $\Ad$ 
		& 1                          
		& $m\geq 1$ \\
		\hline 
		
		$D_m$
		& 0          &    $\C$                          & 0                                         
		& $m\geq 3$\\
		& $\omega_1$ & $\St$ & $\frac{2m-1}{2(2m-2)}$ &  $m\geq 3$ \\ 
		& $\omega_2$ & $\Ad$ 
		& 1                          & $m\geq 3$  \\
		& $\omega_{m-1}$ &   $\Sigma_{2m}^-$              & $\frac{m(2m-1)}{16(m-1)}$       & $3\leq m\leq 7$\\
		& $\omega_m$     & $\Sigma_{2m}^+$               &  $\frac{m(2m-1)}{16(m-1)}$     &  $3\leq m\leq 7$\\  [1ex]
		\hline
%
	\end{tabular}	
\qquad 
$
\begin{array}[t]{ccccc}
\hline\hline
\text{Type}&\Lambda_\pi&\pi&\lambda^\pi
\\ [0.5ex] 
		\hline\hline
		E_6
		& 0        & \C     & 0               \\
		& \omega_1 & [27]   & \frac{13}{18} \\ 
		& \omega_6 & [27]^* & \frac{13}{18} \\
		& \omega_2 & \Ad    & 1             \\
		\hline 
		E_7
		& 0        & \C   & 0\\
		& \omega_7 & [56] & \frac{19}{24} \\
		& \omega_1 & \Ad  & 1 \\
		\hline 
		E_8
		& 0        & \C  & 0\\
		& \omega_8 & \Ad & 1  \\
		\hline 
		
		F_4
		& 0        & \C   & 0 \\
		& \omega_4 & [26] & \frac{2}{3} \\ 
		& \omega_1 & \Ad  & 1 \\
		\hline 
		G_2
		& 0          &    \C                          & 0 \\
		& \omega_1 & [7] & \frac{1}{2} \\ 
		& \omega_2 & \Ad & 1 \\
		\hline
\end{array}
$	
\end{table}

We have for any $\pi\in\widehat G$ that
\begin{equation}
V_\pi^H\subseteq V_\pi^{\fh_\C} 
:= \{v\in V_\pi: (d\pi)_\C(X)\cdot v=0\quad\forall\, X\in\fh_\C\},
\end{equation}
and the equality holds when $H$ is connected.

\begin{proposition}\label{prop:sufficient}
Let $(G/H,g_{\st})$ be a standard homogeneous Riemannian manifold with $G$ and $H$ connected. 
Then $\lambda_1(G/H,g_{\st})\geq 1$ in the following cases:
\begin{enumerate}
\item\label{item3:su-N!=6,7} 
If $\fg\simeq\su(N)$ with $N\neq6,7$, and $V_{\pi_{\Lambda}}^{\fh_\C}=0$ for all $\Lambda\in \{\omega_1,\omega_2\}$. 

\item\label{item3:su-N==6,7} 
If $\fg\simeq\su(6)$ or $\su(7)$, and $V_{\pi_{\Lambda}}^{\fh_\C}=0$ for all $\Lambda\in \{\omega_1,\omega_2,\omega_3\}$. 	

\item\label{item3:SU/Zd-d!=1,2,3} 
If $G\simeq\SU(N)/\Z_d$ with $d\neq 1,2,3,\; d\mid N$. 

\item\label{item3:SU/Zd-d!=1,2} 
If $G\simeq\SU(N)/\Z_d$ with $N>7,\; d\neq 1,2, \; d\mid N$. 

\item\label{item3:SU/Zd-d=2} 
If $G\simeq\SU(N)/\Z_2$ with $N$ even, and $V_{\pi_{\omega_2}}^H=0$. 

\item\label{item3:SU/Zd-d=3,N=6} 
If $G\simeq\SU(6)/\Z_3$, and $V_{\pi_{\omega_3}}^H=0$. 


\item\label{item3:sp-N!=3}
If $\fg\simeq\spp(N)$ with $N\neq3$, and $V_{\pi_{\Lambda}}^{\fh_\C}=0$ for $\Lambda\in\{\omega_1,\omega_2\}$. 

\item\label{item3:sp-N==3}
If $\fg\simeq\spp(3)$, and $V_{\pi_{\Lambda}}^{\fh_\C}=0$ for $\Lambda\in\{\omega_1,\omega_2,\omega_3\}$. 

\item\label{item3:Sp(N)/Z_2}
If $G\simeq\Sp(N)/\Z_2$, and $V_{\pi_{\omega_2}}^H=0$.


\item\label{item3:so-N>=15} 
If $\fg\simeq\so(N)$ with $N\geq15$, and	$V_{\pi_{\omega_1}}^{\fh_\C}=0$. 

\item\label{item3:so-N<15} 
If $\fg\simeq\so(N)$ with $N\leq14$, and	
$V_{\pi_{\Lambda}}^{\fh_\C}=0$ for $\Lambda\in\{\omega_1,\omega_{\lfloor\frac{N}{2}\rfloor}\}$. 

\item\label{item3:SO} 
If $G\simeq\SO(N)$, and	$V_{\pi_{\omega_1}}^H=0$. 

\item\label{item3:SO/Z2} 
If $G\simeq\SO(N)/\Z_2$ with $N$ even.


\item \label{item3:E8}
If $\fg\simeq\fe_8$. 

\item \label{item3:E7}
If $\fg\simeq\fe_7$, and $V_{\pi_{\omega_7}}^{\fh_\C}=0$.

\item \label{item3:E7/Z2}
If $G\simeq\op{E}_7/\Z_2$.

\item \label{item3:E6}
If $\fg\simeq\fe_6$, and $V_{\pi_{\omega_1}}^{\fh_\C}=0$.

\item \label{item3:E6/Z3}
If $G\simeq\op{E}_6/\Z_3$.

\item \label{item3:F4}
If $\fg\simeq\ff_4$, and $V_{\pi_{\omega_4}}^{\fh_\C}=0$.

\item \label{item3:G2}
If $\fg\simeq\fg_2$, and $V_{\pi_{\omega_1}}^{\fh_\C}=0$.
\end{enumerate}
\end{proposition}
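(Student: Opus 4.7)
The plan is to apply the sufficient condition~\eqref{eq:sufficientcondition}: under each hypothesis I would show that $V_\pi^H=0$ for every $\pi\in\widehat G\smallsetminus\{1_G\}$ with $\lambda^\pi<1$. Two general reductions make this feasible. First, since $H$ is connected, $V_\pi^H$ coincides with $V_\pi^{\fh_\C}$; second, since $G$ is connected, $\widehat G$ is parametrized by $\PP^+(G)=\PP^+(\fg_\C)\cap\PP(G)$; finally, by~\eqref{eq2:dual}, it suffices to carry out the verification on only one representative of each dual pair $\{\Lambda,\Lambda^*\}$.

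With these reductions in place, the proof becomes a bookkeeping exercise driven by Table~\ref{table:CasimirEigenvalues}: for each simple type of $\fg$ I would read off the finite set of dominant weights with $\lambda^{\pi_\Lambda}<1$, collapse it modulo duality, and intersect with $\PP(G)$ using the lattice descriptions in Remark~\ref{rem2:integralweightlattice}. The leftover weights should match exactly the list in the hypothesis of the corresponding item, and then~\eqref{eq:sufficientcondition} delivers the bound $\lambda_1\geq 1$.

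I would organize the verification in four blocks. For $\fg\simeq\su(N)$ (items~\ref{item3:su-N!=6,7}--\ref{item3:SU/Zd-d=3,N=6}), Table~\ref{table:CasimirEigenvalues} leaves $\{\omega_1,\omega_2\}$ (and additionally $\omega_3$ when $N\in\{6,7\}$) after applying duality; formula~\eqref{eq:A_N[d]} then removes $\omega_p$ from $\PP(\SU(N)/\Z_d)$ whenever $d\nmid p$, reproducing items~\ref{item3:SU/Zd-d!=1,2,3}--\ref{item3:SU/Zd-d=3,N=6} exactly. For $\fg\simeq\spp(N)$ (items~\ref{item3:sp-N!=3}--\ref{item3:Sp(N)/Z_2}), the list is $\{\omega_1,\omega_2\}$ (plus $\omega_3$ when $N=3$), and $\PP(\Sp(N)/\Z_2)=\DD_N$ excludes the odd-index $\omega_k$'s, leaving just $\omega_2$. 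For $\fg\simeq\so(N)$ (items~\ref{item3:so-N>=15}--\ref{item3:SO/Z2}), the short list consists of $\omega_1$ and the (half-)spin weight $\omega_{\lfloor N/2\rfloor}$ (only for $N\leq 14$); the lattice $\PP(\SO(N))=\Z^{\lfloor N/2\rfloor}$ eliminates the (half-)spin weights, and $\PP(\SO(N)/\Z_2)=\DD_{N/2}$ for $N$ even further eliminates $\omega_1$. Finally, for the exceptional types (items~\ref{item3:E8}--\ref{item3:G2}), Table~\ref{table:CasimirEigenvalues} already provides very short lists.

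The main \emph{obstacle} is organizational rather than conceptual: I would need to verify items~\ref{item3:E7/Z2} and~\ref{item3:E6/Z3} by showing that the minuscule weights $\omega_7$ of $\fe_7$ and $\omega_1$ of $\fe_6$ do not lie in the root lattices $\EE_7$ and $\EE_6$, respectively. This follows by reading off their $\ee_i$-coordinates from Table~\ref{table:Bourbaki} and noting that these coordinates violate the parity/divisibility constraints defining $\EE_8$ (and hence $\EE_7$ and $\EE_6$). Apart from such small explicit checks, the whole argument is a single systematic application of~\eqref{eq:sufficientcondition}, which I would streamline by stating once the unified reduction ``modulo duality, every nonzero weight of $\PP^+(G)$ with Casimir eigenvalue less than one belongs to the hypothesis list'' and then tabulating each item compactly.
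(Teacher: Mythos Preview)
Your proposal is correct and follows essentially the same approach as the paper: both reduce to the sufficient condition~\eqref{eq:sufficientcondition}, list the dominant weights with Casimir eigenvalue less than~$1$ from Table~\ref{table:CasimirEigenvalues}, collapse modulo duality via~\eqref{eq2:dual}, and then intersect with $\PP(G)$ using the explicit lattice descriptions of Remark~\ref{rem2:integralweightlattice} (in particular~\eqref{eq:A_N[d]} for type~$A$). The paper merely carries this out item by item rather than in the four blocks you describe, but the logic and the individual checks are identical.
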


\begin{proof}
In each item we will check that the hypotheses force the sufficient condition in \eqref{eq:sufficientcondition}, which translates via the Highest Weight Theorem (see Remark~\ref{rem2:HighestWeightTheorem}) to
\begin{equation}\label{eq2:sufficientcondition-weights}
\text{
	$V_{\pi_{\Lambda}}^{H}=0$ for all $\Lambda\in\PP^+(G)\smallsetminus\{0\}$ satisfying $\lambda^{\pi_\Lambda}<1$.
}
\end{equation} 
We will first identify 
\begin{equation}\label{eq2:lambda^pi<1}
\Lambda\in\PP^+(\fg_\C)\smallsetminus\{0\}\text{ satisfying }
\lambda^{\pi_{\Lambda}}<1
\end{equation}
from Table~\ref{table:CasimirEigenvalues}, and then check which of them belong to $\PP(G)$ using the information in Remark~\ref{rem2:integralweightlattice}. 
In the cases when there are $\Lambda_1,\Lambda_2$ satisfying \eqref{eq2:lambda^pi<1} and $\pi_{\Lambda_1}^*\simeq\pi_{\Lambda_2}$, one of them is omitted in the hypotheses because $V_{\pi_{\Lambda_1}}^{\fh_\C}=0$ if and only if $V_{\pi_{\Lambda_2}}^{\fh_\C}=0$ by \eqref{eq2:dual}.

We first consider the cases when $\fg_\C\simeq\su(N)$. 
By Table~\ref{table:CasimirEigenvalues}, 
\eqref{eq2:lambda^pi<1} is only satisfied for
\begin{itemize}
\item $\omega_1,\omega_{N-2}$ for $N\geq2$ ($\pi_{\omega_{N-2}}^*\simeq\pi_{\omega_1}$ if $N>3$, so $\omega_{N-2}$ will be omitted anyway),

\item $\omega_2,\omega_{N-3}$  for $N\geq3$ ($\pi_{\omega_{N-3}}^*\simeq\pi_{\omega_2}$ if $N>5$, so $\omega_{N-3}$ will be omitted anyway),

\item $\omega_3,\omega_{N-4}$  for $N=6,7$ ($\pi_{\omega_{4}}^*\simeq\pi_{\omega_3}$ if $N=7$, so $\omega_{4}$ will be omitted anyway).
\end{itemize}

We next check whether they belong to $\PP(G)$ for each connected Lie group $G$ with Lie algebra $\su(N)$, namely, $\SU(N)/\Z_d$ for $d\mid N$. 
Let $d$ be a positive divisor of $N$. 
It follows immediately from \eqref{eq:A_N[d]} that
\begin{enumerate}
\renewcommand{\labelenumi}{(\roman{enumi})}
\item $\omega_1\in \PP(\SU(N)/\Z_d)$ if and only $d=1$,

\item $\omega_2\in \PP(\SU(N)/\Z_d)$ if and only if  $d=1$, or $d=2$ when $N\equiv 0\pmod 2$,

\item $\omega_3\in \PP(\SU(N)/\Z_d)$ if and only if  $d=1$, or $d=3$ when $N\equiv 0\pmod 3$.
\end{enumerate}
We thus obtain the following conclusions:

\smallskip
\noindent 
\eqref{item3:su-N!=6,7} 
$\fg\simeq\su(N)$ with $N\neq6,7$: 
$\lambda^{\pi_{\omega_3}}>1$ since $N\neq 6,7$, thus \eqref{eq2:sufficientcondition-weights} is equivalent to $V_{\pi_{\Lambda}}^{\fh_\C}=0$ for all $\Lambda\in \{\omega_1,\omega_2\}$.

\smallskip
\noindent 
\eqref{item3:su-N==6,7} 
$\fg\simeq\su(6)$ or $\su(7)$: 
\eqref{eq2:sufficientcondition-weights} is equivalent to $V_{\pi_{\Lambda}}^{\fh_\C}=0$ for all $\Lambda\in \{\omega_1,\omega_2,\omega_3\}$.

\smallskip
\noindent 
\eqref{item3:SU/Zd-d!=1,2,3} 
$G\simeq\SU(N)/\Z_d$ with $d\neq 1,2,3,\; d\mid N$:
It follows from (i)--(iii) that $\omega_1,\omega_2,\omega_3 \notin\PP(G)$, thus \eqref{eq2:sufficientcondition-weights} always holds. 

\smallskip
\noindent 
\eqref{item3:SU/Zd-d!=1,2} 
$G\simeq\SU(N)/\Z_d$ with $N>7,\; d\neq 1,2, \; d\mid N$: 
$\lambda^{\pi_{\omega_3}}>1$  because $N\neq 6,7$.
Furthermore, $\omega_1$ and $\omega_2$ are not in $\PP^+(G)$ by (i) and (ii) respectively. 
Thus \eqref{eq2:sufficientcondition-weights} always holds.

\smallskip
\noindent 
\eqref{item3:SU/Zd-d=2} 
$G\simeq\SU(N)/\Z_2$ with $N$ even:
$\omega_1$ and $\omega_3$ are not in $\PP^+(G)$ by (i) and 
(iii) respectively. 
Thus, \eqref{eq2:sufficientcondition-weights} is equivalent to $V_{\pi_{\omega_2}}^H=0$.

\smallskip
\noindent 
\eqref{item3:SU/Zd-d=3,N=6} 
$G\simeq\SU(6)/\Gamma_3$:
$\omega_1$ and $\omega_2$ are not in $\PP^+(G)$ by (i) and 
(ii) respectively. 
Thus, \eqref{eq2:sufficientcondition-weights} is equivalent to $V_{\pi_{\omega_3}}^H=0$.

\medskip

Suppose $\fg\simeq\spp(N)$. 
\eqref{eq2:lambda^pi<1} holds for $\omega_1=\ee_1$, $\omega_2=\ee_1+\ee_2$, and also $\omega_3=\ee_1+\ee_2+\ee_3$ when $N=3$.

\smallskip
\noindent
\eqref{item3:sp-N!=3}
$\fg\simeq\spp(N)$ with $N\neq3$:
\eqref{eq2:sufficientcondition-weights} is equivalent to $V_{\pi_{\Lambda}}^{\fh_\C}=0$ for $\Lambda\in\{\omega_1,\omega_2\}$ because $N\neq3$. 

\smallskip
\noindent
\eqref{item3:sp-N==3}
$\fg\simeq\spp(3)$:
\eqref{eq2:sufficientcondition-weights} is equivalent to $V_{\pi_{\Lambda}}^{\fh_\C}=0$ for $\Lambda\in\{\omega_1,\omega_2,\omega_3\}$.

\smallskip
\noindent
\eqref{item3:Sp(N)/Z_2} 
$G\simeq\Sp(N)/\Z_2$: 
$\PP(\Sp(n)/\Z_2)=\DD_n$ (see Remark~\ref{rem2:integralweightlattice}), so $\omega_1,\omega_3$ are not there because the sum of their coordinates are not even. 
Consequently, \eqref{eq2:sufficientcondition-weights} is equivalent to $V_{\pi_{\omega_2}}^H=0$.

\medskip

Suppose $\fg\simeq\so(N)$. 
\eqref{eq2:lambda^pi<1} holds for $\omega_1=\ee_1$, and also for $\omega_m=\frac12(\ee_1+\dots+\ee_m)$ when $N=2m+1\leq 13$ and $\omega_{m-1}=\frac12(\ee_1+\dots+\ee_{m-1}-\ee_m),\omega_{m} =\frac12(\ee_1+\dots+\ee_m)$ when $N=2m\leq 14$. 
The irreducible representation of $\fg_\C$ associated with $\omega_m$ is a spin representation, which does not descend to a representation of $\SO(N)$. 
Moreover, $\pi_{\omega_{m-1}}^*\simeq \pi_{\omega_{m}}$ when $N=2m$, thus $\omega_{m-1}$ will be omitted.

\noindent
\eqref{item3:so-N>=15} 
$\fg\simeq\so(N)$ with $N\geq15$:
\eqref{eq2:lambda^pi<1} holds only for $\omega_1=\ee_1$ because $N\geq15$, thus \eqref{eq2:sufficientcondition-weights} reduces to $V_{\pi_{\omega_1}}^{\fh_\C}=0$. 

\noindent
\eqref{item3:so-N<15} 
$\fg\simeq\so(N)$ with $N\leq14$:
\eqref{eq2:sufficientcondition-weights} is equivalent to $V_{\pi_{\omega_1}}^{\fh_\C} = V_{\pi_{\omega_m}}^{\fh_\C}=0$, where $N=2m$ or either $N=2m+1$.

\noindent
\eqref{item3:SO}
$G\simeq\SO(N)$: 
since $\omega_m\notin \PP(\SO(N))=\Z^m$, \eqref{eq2:sufficientcondition-weights} is equivalent to $V_{\pi_{\omega_1}}^H=0$. 

\smallskip
\noindent
\eqref{item3:SO/Z2} 
$G\simeq\SO(2m)/\Z_2$: 
since $\omega_1,\omega_m\notin \PP(\SO(2m)/\Z_2)=\DD_m$, \eqref{eq2:sufficientcondition-weights} always holds. 

\medskip

We finish with the exceptional types. 

\smallskip
\noindent
\eqref{item3:E8}
$\fg\simeq\fe_8$:
\eqref{eq2:lambda^pi<1} is empty, so \eqref{eq2:sufficientcondition-weights} always holds.

\smallskip
\noindent
\eqref{item3:E7}
$\fg\simeq\fe_7$:
\eqref{eq2:lambda^pi<1} holds only for $\omega_7=\ee_6+\frac12(\ee_8-\ee_7)$, thus 
\eqref{eq2:sufficientcondition-weights} is equivalent to $V_{\pi_{\omega_7}}^{\fh_\C}=0$.

\smallskip
\noindent
\eqref{item3:E7/Z2} 
$G\simeq\op{E}_7/\Z_2$: 
since $\omega_7$ is not in $\EE_8\supset \PP(\op{E}_7/\Z_2)=\EE_7$ (see Remark~\ref{rem2:integralweightlattice}), \eqref{eq2:sufficientcondition-weights} always holds.

\smallskip
\noindent
\eqref{item3:E6}
$\fg\simeq\fe_6$:
\eqref{eq2:lambda^pi<1} is only satisfied for $\omega_1=\frac23(\ee_8-\ee_7-\ee_6)$ and $\omega_6=\frac23(\ee_8-\ee_7-\ee_6)+\ee_5$.
However, $\pi_{\omega_6}^*\simeq\pi_{\omega_1}$. 
Thus 
\eqref{eq2:sufficientcondition-weights} is equivalent to $V_{\pi_{\omega_1}}^{\fh_\C}=0$.

\smallskip
\noindent
\eqref{item3:E6/Z3} 
$G\simeq\op{E}_6/\Z_3$: 
$\omega_1$ and $\omega_6$ are clearly not in $\EE_8\supset \PP(\op{E}_6/\Z_3)=\EE_6$, thus \eqref{eq2:sufficientcondition-weights} always holds.

\smallskip
\noindent
\eqref{item3:F4}
$\fg\simeq\ff_4$: 
\eqref{eq2:lambda^pi<1} holds only for $\omega_4$, thus 
\eqref{eq2:sufficientcondition-weights} is equivalent to $V_{\pi_{\omega_4}}^{\fh_\C}=0$.

\smallskip
\noindent
\eqref{item3:G2}
$\fg\simeq\fg_2$:
\eqref{eq2:lambda^pi<1} holds only for $\omega_1$, thus 
\eqref{eq2:sufficientcondition-weights} is equivalent to $V_{\pi_{\omega_1}}^{\fh_\C}=0$.
\end{proof}

We now end the section by giving conditions which ensure $V_\pi^H=0$ under certain hypotheses. 

\begin{lemma}\label{lem3:dimV_pi^H}
Let $H$ be a closed subgroup of a compact Lie group $G$, and let $\pi$ be a finite-dimensional representation of $G$.  
Then $V_\pi^H=0$ if the trivial representation $1_H$ of $H$ does not occur in the decomposition in irreducible $H$-representations of the restriction $\pi|_H$ of $\pi$ to $H$. 
\end{lemma}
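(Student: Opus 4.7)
The plan is to recognize $V_\pi^H$ as the $1_H$-isotypic component of the restriction $\pi|_H$, and then invoke complete reducibility together with the fact that an irreducible representation has no nonzero invariant vector unless it is trivial.

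First I would use that $H$ is compact (being closed in the compact group $G$), so the restriction $\pi|_H$ is completely reducible. Write
\begin{equation*}
\pi|_H \;\simeq\; \bigoplus_{\sigma\in\widehat H} m_\sigma\,\sigma
\end{equation*}
for the decomposition into $H$-isotypic components, where $m_\sigma\in\Z_{\geq 0}$. The operation $V\mapsto V^H$ is additive, so
\begin{equation*}
V_\pi^H \;\simeq\; \bigoplus_{\sigma\in\widehat H} m_\sigma\, V_\sigma^H.
\end{equation*}

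Next I would argue that $V_\sigma^H=0$ whenever $\sigma\neq 1_H$. Indeed, $V_\sigma^H$ is an $H$-invariant subspace of $V_\sigma$, hence by irreducibility it equals either $0$ or $V_\sigma$; but if $V_\sigma^H=V_\sigma$ then $H$ acts trivially on $V_\sigma$, forcing $\sigma=1_H$. Therefore only the $1_H$-isotypic summand contributes, and
\begin{equation*}
V_\pi^H \;\simeq\; m_{1_H}\,V_{1_H},
\end{equation*}
whose dimension is exactly $m_{1_H}$. Hence $V_\pi^H=0$ as soon as $m_{1_H}=0$, i.e.\ as soon as the trivial representation $1_H$ does not appear in the decomposition of $\pi|_H$ into $H$-irreducibles, which is the claim.

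There is essentially no obstacle here: the result is a direct consequence of complete reducibility for compact groups and Schur's lemma. The only point to keep in mind when applying the lemma later (e.g.\ via Proposition~\ref{prop:sufficient}) is that the decomposition $\pi|_H=\bigoplus_\sigma m_\sigma\sigma$ is usually analyzed through the complexified Lie algebra $\fh_\C$, for which the equivalent statement $V_\pi^{\fh_\C}=0$ is used whenever $H$ is connected.
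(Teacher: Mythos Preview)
Your proof is correct and follows exactly the paper's approach: the paper's entire proof is the one-line observation that $\dim V_\pi^H$ equals the multiplicity of $1_H$ in $\pi|_H$, which is precisely what you establish in detail via complete reducibility and Schur's lemma.
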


\begin{proof}
This is obvious since $\dim V_\pi^H$ is precisely the number of times that $1_H$ occurs in $\pi|_H$. 
\end{proof}

\begin{notation}\label{notation:G_rho}
Let $H'$ be a compact and connected Lie group. 
For an $N$-dimensional complex representation $\rho$ of $H'$, we denote by $G_\rho$ the subgroup of $\GL(V_\rho)$ satisfying that $\rho(H')\subset G_\rho$ with $G_\rho\simeq\SU(N),\SO(N),\Sp(N/2)$ according to $\rho$ is of complex, real or quaternionic type as in Remark~\ref{rem2:types-of-representations}. 
In any of these cases, $(\fg_\rho)_\C$ is a classical complex Lie algebra and the highest weight of its standard representation $\st_{(\fg_\rho)_\C}$ is always the first fundamental weight $\omega_1$, i.e.\ $\st_{(\fg_\rho)_\C}=\pi_{\omega_1}$. 
\end{notation}

\begin{lemma}\label{lem3:standard}
Let $H'$ be a compact and connected Lie group, let $\rho$ be a complex finite-dimensional representation ${H}'$, and let $G_\rho$ be as above. 
Then 
$$
\dim V_{\pi_{\omega_1}}^{(d\rho)_\C(\fh_\C')}
= \dim V_{(d\rho)_\C}^{\fh_\C'}
,
$$
which coincides with the number of times that the trivial representation $1_{\fh_\C'}$ occurs in the decomposition of $(d\rho)_\C$ as irreducible representations. 

In particular, if $G$ is any connected Lie group with Lie algebra $\fg_\rho$ such that $\omega_1\in\PP(G)$,  $\rho_0:H'\to G$ is the only Lie group homomorphism satisfying $d\rho_0=d\rho$, and $H:=\rho_0(H')\simeq H'/\op{Ker}(\rho_0)$, then 
$$
V_{\pi_{\omega_1}}^{H}=V_{\rho}^{H'}
.
$$
\end{lemma}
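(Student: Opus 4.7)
The plan is to reduce the statement to a direct identification of $V_{\pi_{\omega_1}}$ with (a complexification of) $V_\rho$ by splitting into the three cases of Remark~\ref{rem2:types-of-representations} according to the type of $\rho$.

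First I would observe that the three dimensions claimed to coincide all compute the same thing, namely the multiplicity of $1_{\fh_\C'}$ in the decomposition of $(d\rho)_\C$: the rightmost one by Lemma~\ref{lem3:dimV_pi^H} (or its obvious Lie-algebraic analogue), and the leftmost one once we identify $V_{\pi_{\omega_1}}$ as an $\fh_\C'$-module (through $(d\rho)_\C$) with $V_\rho$ (possibly after complexification). Thus the real content is to exhibit that identification in each of the three cases.

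Next I would unpack the three cases. If $\rho$ is of complex type, then $G_\rho \simeq \SU(N)$ with standard representation literally on $V_\rho$, so $\pi_{\omega_1} \circ \rho = \rho$ and the equality $V_{\pi_{\omega_1}}^{(d\rho)_\C(\fh_\C')} = V_\rho^{\fh_\C'}$ is tautological. If $\rho$ is of quaternionic type, then $G_\rho \simeq \Sp(V_\rho,\mathcal{J}) \subset \SU(V_\rho)$, so again the standard representation is on $V_\rho$ itself and the identity is immediate. The case requiring care is the real one: here $G_\rho \simeq \SO(V_\rho^+)$ and $V_{\pi_{\omega_1}}$ is naturally the complex vector space $V_\rho^+ \otimes_\R \C$. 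I would use the canonical $\R$-linear identification $V_\rho = V_\rho^+ \oplus \mi V_\rho^+$ to produce a $\C$-linear $G_\rho$-equivariant isomorphism $V_\rho^+ \otimes_\R \C \xrightarrow{\sim} V_\rho$ sending $v\otimes (a+b\mi) \mapsto av + b\mi v$. Under this isomorphism, taking $\fh_\C'$-invariants on the left corresponds to taking them on the right (since complexification commutes with taking invariants of a real Lie algebra action), yielding the claimed dimension equality.

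For the second statement, I would use connectedness. Since $H'$ is connected and $\rho_0$ is continuous, $H = \rho_0(H')$ is connected. Consequently invariants under the group and under the Lie algebra coincide on both sides, i.e.\ $V_{\pi_{\omega_1}}^H = V_{\pi_{\omega_1}}^{\fh_\C}$ and $V_\rho^{H'} = V_\rho^{\fh_\C'}$. The hypothesis $\omega_1 \in \PP(G)$ is exactly what guarantees the existence of $\rho_0$ as a Lie group homomorphism into $G$ whose differential is $d\rho$ (the representation $\pi_{\omega_1}$ of $\fg_\C$ integrates to $G$ precisely when its highest weight lies in $\PP(G)$); combining this with $\fh = d\rho_0(\fh') = d\rho(\fh')$ reduces the group-level equality to the Lie-algebra equality established in the first part.

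The main subtlety I expect is just the real-type bookkeeping in the paragraph above: making sure that the conjugate-linear real structure $\mathcal{J}$ is used only to define $V_\rho^+$, after which one works entirely $\C$-linearly, so that the isomorphism $V_\rho^+ \otimes_\R \C \simeq V_\rho$ is genuinely a morphism of complex $G_\rho$-modules and does not accidentally introduce a twist by complex conjugation. Apart from that, the proof is a routine unwinding of the definitions in Notation~\ref{notation:G_rho}.
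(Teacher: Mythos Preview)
Your proof is correct and follows essentially the same approach as the paper. The paper's argument is more terse: since by Notation~\ref{notation:G_rho} the group $G_\rho$ is defined as a subgroup of $\GL(V_\rho)$ in \emph{all three} cases, the inclusion $G_\rho\hookrightarrow\GL(V_\rho)$ is automatically (the integration of) $\pi_{\omega_1}$, so $\pi_{\omega_1}\circ(d\rho)_\C=(d\rho)_\C$ on the common underlying space $V_\rho$ without any case split. Your real-type analysis is correct but unnecessary once one observes that the real structure already identifies $V_\rho$ with $V_\rho^+\otimes_\R\C$, so the isomorphism you construct is the identity.

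One small misattribution: the hypothesis $\omega_1\in\PP(G)$ is not what guarantees the existence of $\rho_0$ (that is simply assumed in the statement); rather, it ensures that $\pi_{\omega_1}$ integrates to a representation of $G$, so that the expression $V_{\pi_{\omega_1}}^H$ is meaningful at the group level. This does not affect the validity of your argument.
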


\begin{proof}
The underlying vector spaces of the representations $\rho$ and $\pi_{\omega_1}\circ\rho$ coincide.
Hence
$$
V_{\pi_{\omega_1}}^{(d\rho)_\C(\fh_\C')}
= V_{\pi_{\omega_1}\circ(d\rho)_\C }^{\fh_\C'}
.
$$
Since $\pi_{\omega_1}$ is the standard representation, $\pi_{\omega_1}\circ(d\rho)_\C$ is essentially $(d\rho)_\C$, and the first assertion follows. 
Since $H'$ is connected, so is $H$.
Then  $V_{\pi_{\omega_1}}^{H}=V_{\pi_{\omega_1}}^{\fh_\C}$ and  $V_{\rho}^{H'}=\dim V_{(d\rho)_\C}^{\fh_\C'}$, and the second assertion follows too.  
\end{proof}

\section{Strongly isotropy irreducible spaces} \label{sec:isotirred}

A homogeneous space $G/H$ is called \emph{isotropy irreducible} if the isotropy representation on the tangent space $T_{eH}G/H$ of the isotropy group $H$ is irreducible. 
Similarly, $G/H$ is \emph{strongly isotropy irreducible} if the identity connected component $H^o$ of $H$ acts irreducibly on $T_{eH}G/H$. 
Of course, both notions coincide when $H$ is connected. 

Strongly isotropy irreducible spaces were classified independently by Manturov~\cite{Manturov61a,Manturov61b,Manturov66}, Wolf~\cite{Wolf68}, and Krämer~\cite{Kramer}. 
The remaining isotropy irreducible spaces were classified by Wang and Ziller~\cite{WangZiller91}. 

The aim of this section is to estimate the smallest positive eigenvalue $\lambda_1(G/H,g_{\st})$ of the Laplace-Beltrami operator associated to the standard metric $g_{\st}$ on a compact strongly isotropy irreducible space $G/H$. 
We will omit the compact irreducible symmetric spaces since for those cases $\lambda_1(G/H,g_{\st})$ was explicitly computed by Urakawa~\cite{Urakawa86}. 
Every non-symmetric strongly isotropy irreducible space $M$ is compact, therefore they are compact Lie groups $H\subset G$ such that $M=G/H$.

The simply connected non-symmetric strongly isotropy irreducible spaces consist in 10 infinite families described in Table~\ref{table4:isotropyirred-families} and 33 isolated cases described in Tables~\ref{table4:isotropyirredexcepcion-classical}--\ref{table4:isotropyirredexcepcion-exceptional}.
For each entry $G/H$ in Tables~\ref{table4:isotropyirred-families}--\ref{table4:isotropyirredexcepcion-exceptional}, 
\begin{itemize}
\item $H\subset G$ are compact Lie groups,
\item $G/H$ is simply connected, 
\item $G$ is connected and acts effectively on $G/H$, 
\item the linear isotropy action of $H$ on the tangent space of $G/H$ is irreducible. 
\end{itemize}
(Note that $H$ is automatically connected because $G$ is connected and $G/H$ is simply connected, thus $G/H$ is isotropy irreducible.)
This information was extracted from the four tables in pages 107--110 in 
\cite{Wolf68}, and from \cite{Wolf84-errata-isot-irred}. 
An arbitrary (not necessarily simply connected) non-symmetric strongly isotropy irreducible space is covered by one element in the tables. 

We next explain the embedding of $H$ into $G$ for each case using Notation~\ref{notation:G_rho}.

\begin{remark}[Embedding of $H$ into $G$]
\label{rem4:embedding}
Let $G/H$ be an entry in  Tables~\ref{table4:isotropyirred-families}--\ref{table4:isotropyirredexcepcion-exceptional}.
Let $H'$ denote the connected and simply connected Lie group with Lie algebra $\fh'=\fh$. 
The Lie algebra representation $(d\rho)_\C$ of $\fh'$ corresponding to $G/H$ (indicated in the tables) defines a classical Lie algebra $\fg_\rho\simeq \su(N),\so(N),\spp(N/2)$ according to whether $(d\rho)_\C$ is of complex, real or quaternionic type, where $N$ is the dimension of $(d\rho)_\C$ (see Remark~\ref{rem2:types-of-representations}). 

If $G$ is classical (i.e.\ $G/H$ is in  Table~\ref{table4:isotropyirred-families} or \ref{table4:isotropyirredexcepcion-classical}), one can check that $\fg=\fg_\rho$.  
If $\rho:H'\to G$ is the only Lie group homomorphism with $(d\rho)_\C$ indicated in the table, then $H=\rho(H')\simeq H'/\op{Ker}(\rho)$. 
Note that $G$ and $G_\rho$ share the same universal cover, but they do not necessarily coincide. 

If $G$ is exceptional (i.e.\ $G/H$ is in Table~\ref{table4:isotropyirredexcepcion-exceptional}), $(d\rho)_\C$ is not necessarily irreducible, but it decomposes in irreducible representations of the same type.
If $(d\rho)_\C=(d\rho_1)_\C\oplus\dots\oplus (d\rho_l)_\C$, we set $\fg_\rho=\fg_{\rho_1}\oplus\dots\oplus \fg_{\rho_l}$. 
There is a connected Lie group $G_\rho'$ with Lie algebra $\fg_\rho'=\fg_\rho$ containing $G$, so $\fg\subset \fg_\rho$. 
The only Lie group homomorphism $\rho:H'\to G_\rho'$ with $(d\rho)_\C$ indicated in the tables satisfies $\rho(H')\subset G$. 
In this case, $H=\rho(H')\simeq H'/\op{Ker}(\rho)$. 

Additionally, still when $G$ is exceptional, one has that $\fh$ is a maximal subalgebra of $\fg$, and the inclusion $\fh\hookrightarrow\fg$ determines the inclusion $H\hookrightarrow G$ at the Lie group level. 
\end{remark} 

The goal is to prove 
Theorem~\ref{thm4:main}:  $\lambda_1(G'/H',g_{\st})\geq 1>2E$ for all non-symmetric strongly isotropy irreducible spaces $G'/H'$ excepting for $\op{G}_2/\SU(3)$ and $\Spin(7)/\op{G}_2$.

We recall that $\lambda_1(M,g)\leq \lambda_1(M',g')$ for every Riemannian covering $(M,g)\to (M',g')$. 
Indeed, eigenfunctions of the Laplacian of $(M',g')$ are pulled back to eigenfunctions of the Laplacian of $(M,g)$.  
Consequently, if $\lambda_1(G/H,g_{\st})\geq 1$ for a simply connected choice $G/H$ as above, then $\lambda_1(G'/H',g_{\st})\geq1$ for every space $G'/H'$ covered by $G/H$. 
Therefore, it will be sufficient to show that 
\begin{itemize}
\item $\lambda_1(G/H,g_{\st})\geq 1$ for each entry in the tables different to $\op{G}_2/\SU(3)$ and $\Spin(7)/\op{G}_2$, which is done in Subsections~\ref{subsec4:Sp}--\ref{subsec4:exceptional} divided according to the type of $\fg_\C$, and

\item $\lambda_1(G'/H',g_{\st})\geq 1$ for any strongly isotropy irreducible space $G'/H'$ properly covered by $\op{G}_2/\SU(3)$ or $\Spin(7)/\op{G}_2$, which is done in Subsection~\ref{subsec4:roundspheres}.
\end{itemize}

Additionally, Subsection~\ref{subsec4:nu-stability} describes the consequences of Theorem~\ref{thm4:main} on the $\nu$-stability type, and Subsection~\ref{subsec4:computational} gives some numerical computations of $\lambda_1$ of independent interest. 

\begin{remark}\label{rem4:Einsteinfactor}
For each non-symmetric simply connected strongly isotropy irreducible space, the value of the Einstein factor $E$ in  Tables~\ref{table4:isotropyirred-families}--\ref{table4:isotropyirredexcepcion-exceptional} was computed by using the expression (see \cite[p.\ 568]{WangZiller85} or \cite[(7.94)]{Besse})
\begin{equation*}
E=\frac14 + \frac12 \sum_{i=0}^{r}\frac{\dim\fh_i(1-c_i)}{\dim(G/H)}
,
\end{equation*}
where $\fh=\fh_0\oplus\fh_1\oplus \dots\oplus\fh_r$ is the decomposition in irreducible ideals with $\fh_0=\fz(\fh)$ and $\kil_{\fh_i}=c_i\kil_{\fg}|_{\fh_i}$ for $i=0,\dots,r$. 
The factors $c_i$'s were determined using the general procedure described in \cite[p.\ 38--40]{DAtriZiller}. 
These values correct those in \cite{Schwahn-Lichnerowicz} for Families IV and VI. 
\end{remark}


\begin{table}
\caption{The 10 families of simply connected strongly isotropy irreducible spaces.}\label{table4:isotropyirred-families}

{\renewcommand{\arraystretch}{1.5}\small
 \begin{tabular}{ccccccc} 
 No. & $\dfrac{G}{H}$ & $(d\rho)_\C$ &Cond. 
 \\ [1ex]
 \hline \hline \\ [-4ex]
I & 
	$\dfrac{\SU\big(\frac{n(n-1)}{2}\big)/\Z_d}{\SU(n)/\Z_n}$ &
	$\sigma_{\eta_2}$ &
	{\tiny 
	$n\geq 5$,
	\begin{tabular}{c}
		$d=\frac{n}{2}$, $n$ even\\[-3pt]
		$d=n$, $n$ odd
	\end{tabular}
	}& 
\\ 
 \hline \\ [-4ex]
 
 II & 
	$\dfrac{\SU\big(\frac{n(n+1)}{2}\big)/\Z_d}{\SU(n)/\Z_n}$ & 
	$\sigma_{2\eta_1}$  &
	{\tiny 
	$n\geq 3$,
	\begin{tabular}{c}
		$d=\frac{n}{2}$, $n$ even\\[-3pt]
		$d=n$, $n$ odd
	\end{tabular}
	}&
 \\ \hline \\ [-4ex]

III	& 
	$\dfrac{\SU(pq)/\Z_d}{\big(\SU(p)/\Z_p\big)\times \big(\SU(q)/\Z_q\big)}$ &
	$\sigma_{\eta_1}\widehat\otimes \,\, \sigma_{\eta_1'}'$ & 
	{\tiny $\begin{array}{c} 
		2\leq p\leq q, \;
		p+q\neq 4 \\[-3pt] 
		d=\op{lcm}(p,q)
	\end{array}$ } &  
\\ \hline \\ [-3.5ex]
IV & 
	$\begin{array}{c}
		\dfrac{\Sp(n)/\Z_2} {(\Sp(1)/\Z_2)\times (\SO(n)/\Z_2)}, \text{ $n$ even} \\[9pt]
		\dfrac{\Sp(n)/\Z_2}{(\Sp(1)/\Z_2)\times \SO(n)}, \text{ $n$ odd} 
	\end{array}$
	& 
	$\sigma_{\eta_1}\widehat\otimes \,\, \sigma_{\eta_1'}'$ &
	$n\geq 3$ & 
\\ \hline \\ [-3ex]

V & 
	$\begin{array}{c}
		\dfrac{\Spin(n^2-1)}{\SU(n)/\Z_n}, \text{ $n$ odd} \\[9pt]
		\dfrac{\SO(n^2-1)}{\SU(n)/\Z_n}, \text{ $n$ even} 
	\end{array}$ & 
	$\sigma_{\eta_1+\eta_{n-1}}$ & 
	$n\geq 3$ & 
\\ \hline \\ [-3ex]
 
VI & 
	$\begin{array}{c}
		\dfrac{\Spin(2n^2-n-1)}{\Sp(n)/\Z_2}, \text{ $n\equiv 0,1\pmod 4$} \\[9pt]
		\dfrac{\SO(2n^2-n-1)}{\Sp(n)/\Z_2}, \text{ $n\equiv 2,3\pmod 4$} 
	\end{array}$ &
	$\sigma_{\eta_2}$  & 
	$n\geq 3$ & 
\\ \hline \\ [-3ex]
 
VII  & 
	$\begin{array}{c}
		\dfrac{\Spin(2n^2+n)}{\Sp(n)/\Z_2}, \text{ $n\equiv 0,3\pmod 4$} \\[9pt]
		\dfrac{\SO(2n^2+n)}{\Sp(n)/\Z_2}, \text{ $n\equiv 1,2\pmod 4$} 
	\end{array}$ &
	$\sigma_{2\eta_1}$ &
	$n\geq 3$ & 

 \\
 \hline \\ [-3ex]
 
VIII & 
	$\dfrac{\SO(4n)/\Z_2}{(\Sp(1)/\Z_2)\times(\Sp(n)/\Z_2)}$ & 
	$\sigma_{\eta_1}\widehat\otimes \,\, \sigma_{\eta_1'}'$ & 
	$n\geq 2$ & 
\\
 \hline \\ [-3ex]
 
IX   & 
	$\begin{array}{c}
		\dfrac{\SO\big(\frac{n(n-1)}{2}\big)}{\SO(n)}, \text{ $n\equiv 1\pmod 2$} \\[9pt]
		\dfrac{\Spin\big(\frac{n(n-1)}{2}\big)}{\SO(n)/\Z_2}, \text{ $n\equiv 0\pmod 4$} \\[9pt]
		\dfrac{\SO\big(\frac{n(n-1)}{2}\big)}{\SO(n)/\Z_2}, \text{ $n\equiv 2\pmod 4$} 
	\end{array}$ &
	$\begin{array}{cc} 
		\sigma_{\eta_2} \\
		\sigma_{2\eta_2}
		\end{array}$  & 
	$\begin{array}{cc} n=5, \\ n\geq 7 \end{array}$ 
 \\
 \hline \\ [-3ex]
X    & 
	$\begin{array}{c}
		\dfrac{\SO\big(\frac{(n-1)(n+2)}{2}\big)}{\SO(n)}, \text{ $n\equiv 1\pmod 2$} \\[9pt]
		\dfrac{\Spin\big(\frac{(n-1)(n+2)}{2}\big)}{\SO(n)/\Z_2}, \text{ $n\equiv 0\pmod 4$} \\[9pt]
		\dfrac{\SO\big(\frac{(n-1)(n+2)}{2}\big)}{\SO(n)/\Z_2}, \text{ $n\equiv 2\pmod 4$} \\[9pt]
	\end{array}$ &
 	$\sigma_{2\eta_1}$ & 
 	$n\geq 5$ & 
\\ 
\hline
\end{tabular}

\smallskip

{\scriptsize $\SO(20)/(\SU(4)/\Z_4)$} is not listed as in \cite{Wolf68} for being a member of Family X ($n=6$) (see \cite{Schwahn-Lichnerowicz}).
}
\end{table}

{
\renewcommand{\arraystretch}{1.5}
\begin{table}[!ht]
\caption{Isolated simply connected strongly isotropy irreducible spaces with $G$ classical.}\label{table4:isotropyirredexcepcion-classical}

\begin{tabular}[t]{c c c c c c c c} 
\hline 
\hline
No & $G$
 & $H$
 & $(d\rho)_\C$ & $2E$& 
\begin{tabular}{c}
$\nu$-stability \\[-11pt] type
\end{tabular}
& 
 \begin{tabular}{c}
 \tiny computational \\[-12pt] 
 \tiny calculation of \\[-12pt]
 \tiny $\lambda_1(G/H,g_{\st})$ 
 \end{tabular}
\\ [0.5ex]
\hline 
\hline 
 1&$\SU(16)/\Z_4$ & $\SO(10)/\Z_2$  & $\sigma_{\eta_4}$  &$\frac{11}{16}$  &unknown 
 &$\lambda^{\pi_{2\omega_2}}=\frac{63}{32}$
 \\  \hline 
 2&$\SU(27)/\Z_3$ & $\op{E}_6/\Z_3$    & $\sigma_{\eta_1}$ & $\frac{11}{18}$ &$\nu$-semistable 
 &$\lambda^{\pi_{3\omega_1}}=\frac{130}{81}$
 \\ \hline
 3&$\Spin(7)$  & $\op{G}_2$    & $\sigma_{\eta_1}$ & $\frac{9}{10}$  &$\nu$-stable*
 &$\lambda^{\pi_{\omega_3}}=\frac{21}{40}$
  \\ \hline
 4& $\SO(133)$ & $\op{E}_7/\Z_2$   & $\sigma_{\eta_3}$ & $\frac{135}{262}$ &$\nu$-stable
 &not computed
 \\ \hline
 5&$\Sp(2)/\Z_2$  & $\SO(3)$  & $\sigma_{3\eta_1}$ & $\frac{9}{10}$  &unknown 
 &$\lambda^{\pi_{4\omega_1}}=\frac{8}{3}$
 \\ \hline
 6&$\Sp(7)/\Z_2$  & $\Sp(3)/\Z_2$ & $\sigma_{\eta_3}$  & $\frac{29}{40}$  &unknown
 &$\lambda^{\pi_{4\omega_1}}=\frac{9}{4}$
 \\ \hline
 7&$\Sp(10)/\Z_2$ & $\SU(6)/\Z_6$  & $\sigma_{\eta_3}$  & $\frac{15}{22}$  &unknown 
 &$\lambda^{\pi_{4\omega_1}}=\frac{24}{11}$
 \\ \hline
 8&$\Sp(16)/\Z_2$ & $\SO(12)/\Z_2$ & $\sigma_{\eta_5}$ & $\frac{43}{68}$   &unknown
 &$\lambda^{\pi_{4\omega_1}}=\frac{36}{17}$
 \\ \hline
 9&$\Sp(28)/\Z_2$ & $\op{E}_7/\Z_2$  &  $\sigma_{\eta_7}$ & $\frac{17}{29}$  &$\nu$-stable 
 &$\lambda^{\pi_{4\omega_1}}=\frac{60}{29}$
 \\ \hline
10&$\Spin(14)$ & $\op{G}_2$  &  $\sigma_{\eta_2}$ & $\frac{2}{3}$    &unknown 
&$\lambda^{\pi_{\omega_3}}=\frac{11}{8}$
\\ \hline
11& $\Spin(16)/\Z_2 \;\dag$
& $\SO(9)$ &  $\sigma_{\eta_4}$ & $\frac{23}{28}$  &unknown 
&$\lambda^{\pi_{2\omega_2}}=\frac{15}{7}$
\\ \hline
12&$\Spin(26)$ & $\op{F}_4$  &  $\sigma_{\eta_4}$ & $\frac{2}{3}$    &unknown 
&$\lambda^{\pi_{3\omega_1}}=\frac{27}{16}$
\\ \hline
13&$\Spin(42)$ & $\Sp(4)/\Z_2$& $\sigma_{\eta_4}$  & $\frac{19}{35}$   &$\nu$-stable
&$\lambda^{\pi_{2\omega_2}}=\frac{41}{20}$
\\ \hline
14&$\Spin(52)$ & $\op{F}_4$&  $\sigma_{\eta_1}$  & $\frac{27}{50}$  &$\nu$-stable 
&not computed
\\ \hline
15&$\SO(70)/\Z_2$ & $\SU(8)/\Z_8$ & $\sigma_{\eta_4}$  & $\frac{179}{340}$ &$\nu$-stable
&not computed
\\ \hline
16& $\Spin(78)$ & $\op{E}_6/\Z_3$ & $\sigma_{\eta_2}$   & $\frac{10}{19}$  &$\nu$-stable 
&not computed
\\ \hline
17&$\Spin(128)/\Z_2 \; \dag $
& $\SO(16)/\Z_2$ & $\sigma_{\eta_7}$ & $\frac{173}{336}$ &$\nu$-stable
&not computed
\\ \hline
18&$\Spin(248)$ & $\op{E}_8$ &  $\sigma_{\eta_8}$  & $\frac{125}{246}$ &$\nu$-stable
&not computed
\\  \hline 
\end{tabular}

\

* $(\Spin(7)/\op{G}_2,g_{\st})\simeq (S^7,g_{\text{round}})$ is $\nu$-stable by Remark~\ref{rem:spheres}.

$\dag$ The quotient $\Spin(N)/\Z_2$ is not isomorphic to $\SO(N)$. 
\end{table}

\begin{table} 
\caption{Isolated simply connected strongly isotropy irreducible spaces with $G$ exceptional} \label{table4:isotropyirredexcepcion-exceptional}

\begin{tabular}[t]{cc c c cc c} 
 \hline 
 \hline
No & $G$ & $H$ &$(d\rho)_\C$ &$2E$ & 
\begin{tabular}{c}
$\nu$-stability \\[-11pt] type
\end{tabular}
& 
 \begin{tabular}{c}
 \tiny computational \\[-12pt] 
 \tiny calculation of \\[-12pt]
 \tiny $\lambda_1(G/H,g_{\st})$ 
 \end{tabular}
\\ 
 \hline 
 \hline 
19&
	$\op{E}_6$ &  
	$\SU(3)/\Z_3$             & 
	$\sigma_{2(\eta_1+\eta_2)}$ &
	$\frac{11}{18}$ &
	$\nu$-stable&
	$\lambda^{\pi_{2\omega_1}}=\frac{14}{9}$ 
\\  \hline 
20&
	$\op{E}_6/\Z_3$ & 
	$\SU(3)^3/(\Z_3\times\Z_3)$  & 
	$\sigma_{\eta_1}\widehat{\otimes} \,\sigma_{\eta_1'}'\widehat{\otimes} \,\sigma_0''$&
	$\frac{5}{6}$ &
	unknown   &
	$\lambda^{\pi_{\omega_1+\omega_6}}=\frac{3}{2}$
\\ \hline 
21&
	$\op{E}_6$ & 
	$\op{G}_2$               & 
	$\sigma_{2\eta_1}$&
	$\frac{25}{36}$  &
	$\nu$-stable &
	$\lambda^{\pi_{2\omega_1}}=\frac{14}{9}$
\\ \hline  
22&
	$\op{E}_6/\Z_3$ & 
	$(\SU(3)/\Z_3) \times \op{G}_2$   & 
	\scriptsize
	$
	\sigma_{\eta_1}\widehat{\otimes} \,\sigma_{\eta_1'}'\oplus
	\sigma_{2\eta_2}\widehat{\otimes} \,\sigma_0'
	$&
	$\frac{19}{24}$ &
	unknown   &
	$\lambda^{\pi_{\omega_1+\omega_6}}=\frac{3}{2}$
\\ \hline 
23&
	$\op{E}_7$ & 
	$\SU(3)/\Z_3$ & 
	$\sigma_{6\eta_1}\oplus \,\sigma_{6\eta_2}$&
	$\frac{71}{126}$  &
	$\nu$-stable &
	$\lambda^{\pi_{2\omega_7}}=\frac{5}{3}$
\\ \hline 
24&
	$\op{E}_7/\Z_2$ & 
	$(\SU(6)\times\SU(3))/\Z_6$& 
	$\sigma_{\eta_1}\widehat{\otimes} \,\sigma_{\eta_1'}'$&
	$\frac{5}{6}$ &
	unknown     &
	$\lambda^{\pi_{\omega_6}}=\frac{14}{9}$
\\  \hline 
25&
	$\op{E}_7/\Z_2$ & 
	$\op{G}_2\times (\Sp(3)/\Z_2)$  & 
	$\sigma_{\eta_1}\widehat{\otimes} \,\sigma_{\eta_1'}'$&
	$\frac{7}{9}$  &
	$\nu$-stable &
	$\lambda^{\pi_{\omega_6}}=\frac{14}{9}$
\\ \hline 
26&
	$\op{E}_7$ & 
	$\SO(3)\times\op{F}_4$   & 
	$\sigma_{\eta_1}\widehat{\otimes} \,\sigma_{\eta_1'}'$&
	$\frac{47}{54}$  &
	unknown  &
	$\lambda^{\pi_{\omega_6}}=\frac{14}{9}$
\\ \hline 
27&
	$\op{E}_8$ & 
	$\SU(9)/\Z_3$      & 
	$\sigma_{\eta_3}\oplus \sigma_{\eta_6}\oplus \sigma_{\eta_1+\eta_8}$&
	$\frac{5}{6}$ &
	$\nu$-stable &
	$\lambda^{\pi_{\omega_7}}=2$
\\ \hline 
28&
	$\op{E}_8$ & 
	$(\op{E}_6\times\SU(3))/\Z_3$  & 
	\tiny
	$
	\begin{array}{l}
	\sigma_{\eta_1}\widehat{\otimes} \,\sigma_{\eta_1'}' \oplus
	\sigma_{\eta_6}\widehat{\otimes}\,\sigma_{\eta_2'}' \oplus\\[-6pt]
	\sigma_{0}\widehat{\otimes} \,\sigma_{\eta_1'+\eta_2'}' \oplus
	\sigma_{\eta_2}\widehat{\otimes} \,\sigma_{0}' 
	\end{array}
	$&
	$\frac{5}{6}$ &
	$\nu$-stable &
	$\lambda^{\pi_{\omega_1}}=\frac{8}{5}$
\\ \hline 
29&
	$\op{E}_8$ & 
	$\op{G}_2\times\op{F}_4$    & 
	$\begin{array}{c}
	\sigma_{\eta_1}\widehat{\otimes} \sigma'_{\eta'_1}\oplus \sigma_{0}\widehat{\otimes} \sigma'_{\eta'_1} \oplus \\ [-6pt]
	\sigma_{\eta_2}\widehat{\otimes} \sigma'_0 
	\end{array}$&
	$\frac{23}{30}$  &
	$\nu$-stable &
	$\lambda^{\pi_{\omega_1}}=\frac{8}{5}$
\\ \hline 
30&
	$\op{F}_4$ & 
	$(\SU(3)\times\SU(3))/\Z_3$ & 
	$
	\begin{array}{c} 
	\sigma_{\eta_1}\widehat{\otimes} \sigma'_{\eta'_1}\oplus \sigma_{\eta_2}\widehat{\otimes} \sigma'_{\eta'_2}
	\oplus\\   \sigma_0 \widehat{\otimes} \sigma'_{\eta'_1+\eta'_2}
	\end{array}$&
	$\frac{5}{6}$  &
	unknown  &
	$\lambda^{\pi_{\omega_3}}=\frac{4}{3}$
\\ \hline 
31&
	$\op{F}_4$ & 
	$\SO(3)\times\op{G}_2$ & 
	$\sigma_{2\eta_1}\widehat{\otimes} \sigma'_{\eta'_1}\oplus \sigma_{4\eta_1}\widehat{\otimes} \sigma'_{0}$&
	$\frac{29}{36}$  &
	unknown &
	$\lambda^{\pi_{2\omega_3}}=\frac{13}{9}$
\\ \hline 
32&
	$\op{G}_2$ & 
	$\SO(3)$          & 
	$\sigma_{6\eta_1}$&
	$\frac{43}{56}$  &
	unknown &
	$\lambda^{\pi_{2\omega_2}}=\frac{5}{2}$
\\ \hline 
33&
	$\op{G}_2$ & 
	$\SU(3)$           & 
	$\sigma_{\eta_1}\oplus \sigma_{\eta_2}$&
	$\frac{5}{6}$    &
	$\nu$-stable* &
	$\lambda^{\pi_{\omega_1}}=\frac{1}{2}$
\\  \hline 
\end{tabular}

\

* $(\op{G}_2/\SU(3),g_{\st})\simeq (S^6,g_{\text{round}})$ is $\nu$-stable by Remark~\ref{rem:spheres}.

\end{table} 
}


\subsection{Symplectic cases for isotropy irreducible spaces}\label{subsec4:Sp}
In this subsection we prove that $\lambda_1(G/H,g_{\st})\geq1$ for all cases $G/H$ from Tables~\ref{table4:isotropyirred-families}--\ref{table4:isotropyirredexcepcion-exceptional} with $\fg=\spp(N)$ for some $N\in\N$, namely, Family IV and Isolated cases No.~5--9. 
Note that $G=\Sp(N)/\Z_2$ in all these cases. 
Now, Proposition~\ref{prop:sufficient}\eqref{item3:Sp(N)/Z_2} yields that it suffices to show that 
\begin{equation}\label{eq4-Sp:V_(omega2)^H=0}
V_{\pi_{\omega_2}}^H=0. 
\end{equation}
In all cases within this subsection, the goal will be to obtain the decomposition in irreducible $H$-modules of $\pi_{\omega_2}|_H$ and check that the trivial representation $1_H$ of $H$ does not occurs in it, concluding that \eqref{eq4-Sp:V_(omega2)^H=0} holds by Lemma~\ref{lem3:dimV_pi^H}. 

\smallskip
\noindent
$\bullet$
Family IV: 
We have from \cite{Wolf84-errata-isot-irred} that, for any $n\geq3$, $G=\Sp(n)/\Z_2$ and 
$$
H\simeq \begin{cases}
(\Sp(1)/\Z_2)\times(\SO(n)/\Z_2)&\text{ if $n$ is even,}\\
(\Sp(1)/\Z_2)\times \SO(n)&\text{ if $n$ is odd. }
\end{cases}
$$

We set $H'=\SU(2)\times\SO(n)$ and $\rho=\st_{\SU(2)} \widehat\otimes \,\st_{\SO(n)}:H'\to\GL(\C^2\otimes \C^{n})$.
Since $\st_{\SU(2)}=\sigma_{\eta_1}$ is of quaternionic type and $\st_{\SO(n)}$ is of real type, it follows that $\rho$ is of quaternionic type. 
Thus $\rho:H'\to \Sp(n)$, and $H=\rho_0(H')\simeq H'/\op{Ker}(\rho_0)$, where $\rho_0:H'\to G=\Sp(n)/\Z_2$ is the only Lie group homomorphism satisfying $d\rho_0=d\rho$.

We will work only at the Lie algebra level without losing generality because $H$ and $H'$ are connected. 
We have that 
$\st_{\SO(3)} = \sigma_{2\eta_1'}'$, 
$\st_{\SO(n)} = \sigma_{\eta_1'}'$ for $n\geq5$, 
and 
$\st_{\SO(4)}=\sigma_{2\eta_1'}'\widehat\otimes \, \sigma_{2\eta_1''}''$ using $\so(4)\simeq\su(2)\oplus\su(2)$.

It is well known that the standard representation $\pi_{\omega_1}$ of $\fg_\C$ satisfies 
\begin{equation}\label{eq4-Sp:Lambda^2st}
{\textstyle\bigwedge^2}\pi_{\omega_1}
\simeq  \pi_{\omega_2}\oplus  1_{\fg_\C}
\end{equation}
as representations of $\fg_\C$. 
Moreover, $\pi_{\omega_1}|_{\fh_\C'}= (d\rho)_\C$. 
(Note that $\pi_{\omega_1}$ does not descend to a representation of $G=\Sp(n)/\Z_2$; this is the reason why we work at the Lie algebra level.)

It follows immediately from \eqref{eq2:Lambda^2(VxW)-Sym^2(VxW)} that 
\begin{equation*}
\begin{aligned}
\big({\textstyle \bigwedge^2}(\pi_{\omega_1})\big)|_{\fh_\C'}&
=
{\textstyle \bigwedge^2}((d\rho)_\C) 
=
{\textstyle \bigwedge^2(\sigma_{\eta_1}\widehat\otimes \,\st_{\SO(n)})} 
\\ &
\simeq {\textstyle \bigwedge^2}(\sigma_{\eta_1}) \,\widehat\otimes \, \Sym^2(\st_{\SO(n)})
\oplus \Sym^2(\sigma_{\eta_1}) \,\widehat\otimes \, {\textstyle \bigwedge^2}(\st_{\SO(n)})
.
\end{aligned} 
\end{equation*}
It is well known that
${\textstyle \bigwedge^2}(\sigma_{\eta_1})\simeq \sigma_0$,
$\Sym^2(\sigma_{\eta_1})\simeq \sigma_{2\eta_1}$, 
$\Sym^2(\st_{\SO(3)})=\Sym^2(\sigma_{2\eta_1'}') \simeq \sigma_{4\eta_1'}'\oplus \sigma_0'$, 
$\Sym^2(\st_{\SO(4)})
	=\Sym^2(\sigma_{2\eta_1'}'\oplus \sigma_{2\eta_1''}'')
	\simeq \sigma_{0}'\widehat\otimes  \sigma_{0}''\oplus \sigma_{2\eta_1'}'\widehat\otimes \sigma_{2\eta_1''}''$,
$\Sym^2(\st_{\SO(n)})=\Sym^2(\sigma_{\eta_1'}') \simeq \sigma_{2\eta_1'}'\oplus \sigma_0'$ for $n\geq5$, 
Furthermore, ${\textstyle \bigwedge^2}(\st_{\SO(n)})$ is equivalent to 
	$\sigma_{2\eta_1'}'$ for $n=3$, 
	$\sigma_{2\eta_1'}'\widehat\otimes  \sigma_{0}''\oplus \sigma_{0}'\widehat\otimes \sigma_{2\eta_1''}''$ for $n=4$, 
	$\sigma_{2\eta_2'}'$ for $n=5$, 
	$\sigma_{\eta_2'+\eta_3'}'$ for $n=6$, and 
	$\sigma_{\eta_2'}'$ for $n\geq7$. 
Hence  
\begin{equation}\label{eq4:Sp:Lambda^2st|_h}
{\textstyle \bigwedge^2}(\pi_{\omega_1})|_{\fh_\C'}
\simeq 
\begin{cases}
\sigma_{0}\widehat\otimes \, \sigma_{0}' 
\oplus 
\sigma_{0}\widehat\otimes \, \sigma_{4\eta_1'}' 
\oplus 
\sigma_{2\eta_1}\widehat\otimes \, \sigma_{2\eta_1'}' 
	&\text{for }n=3,
\\
\sigma_{0}\widehat\otimes\sigma_{0}'\widehat\otimes  \sigma_{0}''
\oplus 
\sigma_{0}\widehat\otimes \sigma_{2\eta_1'}'\widehat\otimes \sigma_{2\eta_1''}''
\\ \qquad 
\oplus
\sigma_{2\eta_1}\widehat\otimes \, 
\sigma_{2\eta_1'}'\widehat\otimes  \sigma_{0}''
\oplus 
\sigma_{2\eta_1}\widehat\otimes \, 
\sigma_{0}'\widehat\otimes \sigma_{2\eta_1''}''

	&\text{for }n=4,
\\

\sigma_{0}\widehat\otimes \, \sigma_{0}' 
\oplus
\sigma_{0}\widehat\otimes \, \sigma_{2\eta_1'}' 
\oplus 
\sigma_{2\eta_1}\widehat\otimes \, \sigma_{2\eta_2'}' 
	&\text{for }n=5,
\\
\sigma_{0}\widehat\otimes \, \sigma_{0}' 
\oplus 
\sigma_{0}\widehat\otimes \, \sigma_{2\eta_1'}' 
\oplus 
\sigma_{2\eta_1}\widehat\otimes \, \sigma_{\eta_2'+\eta_3'}' 
	&\text{for }n=6,
\\
\sigma_{0}\widehat\otimes \, \sigma_{0}' 
\oplus 
\sigma_{0}\widehat\otimes \, \sigma_{2\eta_1'}' 
\oplus 
\sigma_{2\eta_1}\widehat\otimes \, \sigma_{\eta_2'}' 
	&\text{for }n\geq 7.
\end{cases}
\end{equation}
Cancelling the trivial representation of $\fh_\C'$ (i.e.\ $\sigma_0\widehat\otimes \,\sigma_{0}'$ for $n\neq 4$ and $\sigma_0\widehat\otimes \,\sigma_{0}'\widehat\otimes \,\sigma_{0}''$ for $n=4$)
in \eqref{eq4:Sp:Lambda^2st|_h} with the trivial representation $1_{\fg_\C}|_{\fh_\C'}=1_{\fh_\C'}$ within $\bigwedge^2\pi_{\omega_1}|_{\fh_\C'}$ from \eqref{eq4-Sp:Lambda^2st}, we conclude that 
\begin{equation}\label{eq4SpFliaIV:branching-omega2}
\pi_{\omega_2}|_{\fh_\C'}
\simeq 
\begin{cases}
\sigma_{0}\widehat\otimes \, \sigma_{4\eta_1'}' 
\oplus 
\sigma_{2\eta_1}\widehat\otimes \, \sigma_{2\eta_1'}' 
	&\text{for }n=3,
\\
\sigma_{0}\widehat\otimes \sigma_{2\eta_1'}'\widehat\otimes \sigma_{2\eta_1''}''
\oplus
\sigma_{2\eta_1}\widehat\otimes \, 
\sigma_{2\eta_1'}'\widehat\otimes  \sigma_{0}''
\oplus 
\sigma_{2\eta_1}\widehat\otimes \, 
\sigma_{0}'\widehat\otimes \sigma_{2\eta_1''}''

	&\text{for }n=4,
\\

\sigma_{2\eta_1}\widehat\otimes \, \sigma_{2\eta_2'}' 
\oplus \sigma_{0}\widehat\otimes \, \sigma_{2\eta_1'}' 
	&\text{for }n=5,
\\
\sigma_{2\eta_1}\widehat\otimes \, \sigma_{\eta_2'+\eta_3'}' 
\oplus \sigma_{0}\widehat\otimes \, \sigma_{2\eta_1'}' 
	&\text{for }n=6,
\\
\sigma_{2\eta_1}\widehat\otimes \, \sigma_{\eta_2'}' 
\oplus \sigma_{0}\widehat\otimes \, \sigma_{2\eta_1'}' 
	&\text{for }n\geq 7.
\end{cases}
\end{equation}
Since the trivial representation of $\fh_\C'$ does not occur in $\pi_{\omega_2}|_{\fh_\C'}$, we conclude by Lemma~\ref{lem3:dimV_pi^H} that $V_{\pi_{\omega_2}}^{H}=V_{\pi_{\omega_2}}^{\rho_0(H')}=0$, so \eqref{eq4-Sp:V_(omega2)^H=0} holds for $n\geq3$ as claimed.

\smallskip
\noindent
$\bullet$
Isolated case No.~5: 
$\fg\simeq\spp(2)$ and $\fh'=\so(3)$. 

We might argue as above to determine $\pi_{\omega_2}|_{\fh'}$. 
Fortunately, there are quicker ways such as using a computer program (e.g.\ \cite{Sage}) or looking at branching rules tables (e.g.\ \cite{LieART}). 
We will pick the second choice most of the times.

We have that $\dim V_{\pi_{\omega_2}}=5$.
The second row of the table beginning on page 330 in \cite{LieART} says `$\textbf{5}=\textbf{5}$', ensuring $\pi_{\omega_2}|_{\fh'}\simeq \sigma_{4\eta_1}$. 
Indeed, the value $\textbf{5}$ at the left-hand side corresponds via \cite[Table A.43 in pp.~85]{LieART} to $\pi_{\omega_2}$ marked with Dynkin label $(0,1)$, and similarly the value $\textbf{5}$ at the right-hand side corresponds to $\sigma_{4\eta_1}$. 
(Although the value $5$ is the dimension of the representation, there might be in general other irreducible representations with the same dimension.) 

Since $\pi_{\omega_2}|_{\fh'}\simeq\sigma_{4\eta_1}$, Lemma~\ref{lem3:dimV_pi^H} implies that \eqref{eq4-Sp:V_(omega2)^H=0} holds. 

\begin{remark}[References to LieART] \label{rem4:LieART}
The article \cite{LieART} supporting the computer program LieART 2.0 will be cited several times in the sequel without giving details as above. 
The page number in \cite{LieART} will always correspond to the arXiv's version of \cite{LieART}, which includes the long tables with the branching rules in Appendix A. 
The authors double checked all branching laws using \cite{Sage}. 
For instance, the following code returns $\pi_{\omega_2}|_{\fh'}$:
\begin{lstlisting}
	sage: G=WeylCharacterRing("C2", style="coroots")
	sage: H=WeylCharacterRing("A1", style="coroots")
	sage: G(0,1).branch(H,rule=branching_rule(G,H, rule="symmetric_power"))
	A1(4)
\end{lstlisting}
\end{remark}

\smallskip
\noindent
$\bullet$
Isolated case No.~6: 
$\fg\simeq\spp(7)$ and $\fh'=\spp(3)$. 
We have that $\pi_{\omega_2}|_{\fh'}\simeq \sigma_{2\eta_2}$ 
(see e.g.\ \cite[pp.\ 350]{LieART}; $\dim V_{\pi_{\omega_2}}=90$), 
thus \eqref{eq4-Sp:V_(omega2)^H=0} holds by Lemma~\ref{lem3:dimV_pi^H}. 

\smallskip
\noindent
$\bullet$
Isolated case No.~7: 
$\fg\simeq\spp(10)$ and $\fh'=\su(6)$. 
We have that $\pi_{\omega_2}|_{\fh'}\simeq \sigma_{\eta_2+\eta_4}$ 
(see e.g.\ \cite[pp.\ 359]{LieART}; $\dim V_{\pi_{\omega_2}}=189$), 
thus \eqref{eq4-Sp:V_(omega2)^H=0} holds by Lemma~\ref{lem3:dimV_pi^H}. 

\smallskip
\noindent
$\bullet$
Isolated case No.~8: 
$\fg\simeq\spp(16)$ and $\fh'=\so(12)$. 
One can see (e.g.\ using \cite{Sage}) that $\pi_{\omega_2}|_{\fh'}\simeq \sigma_{\eta_4}$, 
thus \eqref{eq4-Sp:V_(omega2)^H=0} holds by Lemma~\ref{lem3:dimV_pi^H}. 
 
\smallskip
\noindent
$\bullet$
Isolated case No.~9: 
$\fg\simeq\spp(28)$ and $\fh'=\fe_7$. 
One can see (e.g.\ using \cite{Sage}) that $\pi_{\omega_2}|_{\fh'}\simeq \sigma_{\eta_6}$, 
thus \eqref{eq4-Sp:V_(omega2)^H=0} holds by Lemma~\ref{lem3:dimV_pi^H}.

\subsection{Special unitary cases for isotropy irreducible spaces}\label{subsec4:SU}
In this subsection we prove that $\lambda_1(G/H,g_{\st})\geq1$ for each $G/H$ from Tables~\ref{table4:isotropyirred-families}--\ref{table4:isotropyirredexcepcion-exceptional} with $\fg=\su(N)$ for some $N\in\N$. 

\smallskip
\noindent
$\bullet$
Family I: 
We have that $G=\SU(N)/\Z_d$ with $N=\frac{n(n-1)}2$ and $H\simeq \SU(n)/\Z_n$ for $n\geq5$, where $d=n/2$ if $n$ is even, or $d=n$ if $n$ is odd. 
Since $N\geq 10>7$ and $d\geq3$,  $\lambda_1(G/H,g_{\st})\geq1$ immediately follows from Proposition~\ref{prop:sufficient}\eqref{item3:SU/Zd-d!=1,2}.

\smallskip
\noindent
$\bullet$
Family II: 
$G=\SU(N)/\Z_d$ with $N=\frac{n(n+1)}2$ and $H\simeq \SU(n)/\Z_n$ for $n\geq3$, where $d=n/2$ if $n$ is even, or $d=n$ if $n$ is odd. 

If $n\geq 5$, then $N\geq 15>7$ and $d\geq3$, thus the assertion follows from Proposition~\ref{prop:sufficient}\eqref{item3:SU/Zd-d!=1,2}.

For $n=4$ we have that $N=10$ and $d=2$.  Proposition~\ref{prop:sufficient}\eqref{item3:SU/Zd-d=2} ensures that it is sufficient to prove that $V_{\pi_{\omega_2}}^H=0$.

For $n=3$ we have that $N=6$ and $d=3$.
Proposition~\ref{prop:sufficient}\eqref{item3:SU/Zd-d=3,N=6} ensures that it is sufficient to prove that $V_{\pi_{\omega_3}}^H=0$.

Both follow from Lemma~\ref{lem3:dimV_pi^H} since $\pi_{\omega_2}|_{\fh_\C'}\simeq \sigma_{2\eta_1+\eta_2}$ for $n=4$ (see e.g.\ \cite[pp.\ 171]{LieART}; $\dim V_{\pi_{\omega_2}}=45$) and
$\pi_{\omega_3}|_{\fh_\C'}\simeq \sigma_{3\eta_1}\oplus \sigma_{3\eta_2}$ for $n=3$ (see e.g.\ \cite[pp.\ 157]{LieART}; $\dim V_{\pi_{\omega_3}}=20$).

\smallskip
\noindent
$\bullet$
Family III: 
$G=\SU(pq)/\Z_d$ and $H\simeq (\SU(p)/\Z_p)\times(\SU(q)/\Z_q)$ for $p\geq q\geq 2$, $pq>4$, where $d=\op{lcm}(p,q)$. 

The cases when $d>3$ follow from Proposition~\ref{prop:sufficient}\eqref{item3:SU/Zd-d!=1,2,3}. 
One can check that $d=3$ only if $p=q=3$, in which case follows from
Proposition~\ref{prop:sufficient}\eqref{item3:SU/Zd-d!=1,2} since $N=pq>7$.

\smallskip
\noindent
$\bullet$
Isolated case No.~1:
$G=\SU(16)/\Z_4$ and $H\simeq \SO(10)/\Z_2$. 
This case follows from  Proposition~\ref{prop:sufficient}\eqref{item3:SU/Zd-d!=1,2,3} since $d=4>3$.

\smallskip
\noindent
$\bullet$
Isolated case No.~2:
$G=\SU(27)/\Z_3$ and $H\simeq \op{E}_6/\Z_3$. 
This case follows from  Proposition~\ref{prop:sufficient}\eqref{item3:SU/Zd-d!=1,2} since $N=27>7$.

\subsection{Orthogonal cases for isotropy irreducible spaces} \label{subsec4:SO}

In this subsection we prove that $\lambda_1(G/H,g_{\st})\geq1$ for each $G/H$ from Tables~\ref{table4:isotropyirred-families}--\ref{table4:isotropyirredexcepcion-exceptional} with $\fg=\so(N)$ for some $N\in\N$ with the exception of $\Spin(7)/\op{G}_2$, namely, Families V--X and Isolated cases No.~4 and 10--18.

As we mentioned in Remark~\ref{rem4:embedding}, the embedding of $H$ into $G$ is given by $H=\rho(H')\simeq H'/\op{Ker}(\rho)$, where $H'$ is the connected and simply connected Lie group with Lie algebra $\fh$ and $\rho$ is the Lie group homomorphism with complexified  differential $(d\rho)_\C:\fh'\to \fg=\fg_{\rho}\subset\gl(V_\rho)$ as indicated in the tables, which is irreducible as a representation of $\fh_\C'$. 
Now, Lemma~\ref{lem3:standard} yields
\begin{equation}\label{eq4-SO:V_st^H=0}
V_{\pi_{\omega_1}}^{\fh_\C}=0. 
\end{equation}

\smallskip
\noindent
$\bullet$
Family V: 
For $n\geq3$, we have $G=\Spin(n^2-1)$ if $n$ is odd, $G=\SO(n^2-1)$ if $n$ is even, and $H=\SU(n)/\Z_n$. 

For $n\geq4$, we have $N=n^2-1\geq 15$.  Proposition~\ref{prop:sufficient}\eqref{item3:so-N>=15} and \eqref{eq4-SO:V_st^H=0} give  $\lambda_1(G/H,g_{\st})\geq1$.  

For $n=3$, $G=\Spin(8)$.  
By Proposition~\ref{prop:sufficient}\eqref{item3:so-N<15} and \eqref{eq4-SO:V_st^H=0}, it is enough to show $V_{\pi_{\omega_4}}^{\fh_\C}=0$, which follows from Lemma~\ref{lem3:dimV_pi^H} since $\pi_{\omega_4}|_{\fh_\C'}\simeq \sigma_{\eta_1+\eta_2}$ (see e.g.\ \cite[pp.\ 209]{LieART}; $\dim V_{\pi_{\omega_4}} =8$).

\smallskip
\noindent
$\bullet$
Family VI: 
For $n\geq3$, we have 
$G=\Spin(2n^2-n-1)$ if $n\equiv0,1\pmod 4$, 
$G=\SO(2n^2-n-1)$ if $n\equiv2,3\pmod 4$,
and $H\simeq \Sp(n)/\Z_2$. 

For $n\geq4$, $N=2n^2-n-1\geq 27$, and the case follows from Proposition~\ref{prop:sufficient}\eqref{item3:so-N>=15} and \eqref{eq4-SO:V_st^H=0}. 

For $n=3$, $G=\SO(14)$, and the case follows from Proposition~\ref{prop:sufficient}\eqref{item3:SO} and \eqref{eq4-SO:V_st^H=0}.

\smallskip
\noindent
$\bullet$
Family VII: 
For $n\geq2$, we have
$G=\Spin(2n^2+n)$ if $n\equiv0,3\pmod 4$, 
$G=\SO(2n^2+n)$ if $n\equiv1,2\pmod 4$,
and $H\simeq \Sp(n)/\Z_2$. 

For $n\geq3$, $N=2n^2+n\geq 20\geq15$, so the case follows from Proposition~\ref{prop:sufficient}\eqref{item3:so-N>=15} and \eqref{eq4-SO:V_st^H=0}. 

For $n=2$, $G=\SO(10)$, and the case follows from Proposition~\ref{prop:sufficient}\eqref{item3:SO} and \eqref{eq4-SO:V_st^H=0}.

\smallskip
\noindent
$\bullet$
Family VIII: 
For $n\geq2$, we have $G=\SO(4n)/\Z_2$ and $H\simeq (\Sp(1)/\Z_2)\times (\Sp(n)/\Z_2)$. 

Since $N=4n$ is even, the case follows from  Proposition~\ref{prop:sufficient}\eqref{item3:SO/Z2}.

\smallskip
\noindent
$\bullet$
Family IX: 
For $n\geq7$, we have $\fg=\so\big(\frac{n(n-1)}{2}\big)$ and $\fh\simeq \so(n)$. 

For $n\geq7$ we have $N=\binom{n}{2}\geq 21$, so the case follows from Proposition~\ref{prop:sufficient}\eqref{item3:so-N>=15} and \eqref{eq4-SO:V_st^H=0}.

\smallskip
\noindent
$\bullet$
Family X: 
For $n\geq5$, we have $\fg=\so\big(\frac{(n-1)(n+2)}{2}\big)$ and $\fh\simeq \so(n)$. 

For $n\geq6$ we have $N=\frac{(n-1)(n+2)}{2}\geq 20$, so the case follows from Proposition~\ref{prop:sufficient}\eqref{item3:so-N>=15} and \eqref{eq4-SO:V_st^H=0}. 

For $n=5$ we have $G=\SO(14)$, and the case follows from Proposition~\ref{prop:sufficient}\eqref{item3:SO} and \eqref{eq4-SO:V_st^H=0}.

\smallskip
\noindent
$\bullet$
Isolated case No.~10:
$G=\Spin(14)$ and $H\simeq\op{G}_2$. 

By Proposition~\ref{prop:sufficient}\eqref{item3:so-N<15} and \eqref{eq4-SO:V_st^H=0}, it is enough to prove $V_{\pi_{\omega_7}}^H=0$. 
This follows from Lemma~\ref{lem3:dimV_pi^H} since $\pi_{\omega_7}|_{\fh_\C}\simeq \sigma_{\eta_1+\eta_2}$ (see e.g.\ \cite[pp.~255]{LieART}; $\dim V_{\pi_{\omega_7}}=64$).

\smallskip
\noindent
$\bullet$
Isolated cases No.~11--18:
We always have $\fg=\so(N)$ for some $N\geq16$, thus the case follows from Proposition~\ref{prop:sufficient}\eqref{item3:so-N>=15} and \eqref{eq4-SO:V_st^H=0}.

\subsection{Exceptional cases for isotropy irreducible spaces} \label{subsec4:exceptional}

In this subsection we prove that $\lambda_1(G/H,g_{\st})\geq1$ for each $G/H$ from Tables~\ref{table4:isotropyirred-families}--\ref{table4:isotropyirredexcepcion-exceptional} with $\fg_\C$ an exceptional complex simple Lie algebra excluding $\op{G}_2/\SU(3)$, namely, Isolated cases No.~19--32.

\smallskip
\noindent
$\bullet$
Isolated cases No.~19 and 21:
We have $G=\op{E}_6$ in the three cases. Proposition~\ref{prop:sufficient}\eqref{item3:E6} ensures that it is sufficient to show that $V_{\pi_{\omega_1}}^H=0$. 
This follows from Lemma~\ref{lem3:dimV_pi^H} since $\pi_{\omega_1}|_{\su(3)_\C}\simeq \sigma_{2(\eta_1+\eta_2)}$ 
and 
$\pi_{\omega_1}|_{(\fg_2)_\C}\simeq \sigma_{2\eta_1}$ 
(see e.g.\ \cite[pp.~378, 377]{LieART}; $\dim V_{\pi_{\omega_1}}=27$).

\smallskip
\noindent
$\bullet$
Isolated cases No.~20 and 22:
We have $G=\op{E}_6/\Z_3$, and the case follows from Proposition~\ref{prop:sufficient}\eqref{item3:E6/Z3}.

\smallskip
\noindent
$\bullet$
Isolated case No.~23:
We have $G=\op{E}_7$, and the case follows from Proposition~\ref{prop:sufficient}\eqref{item3:E7} and Lemma~\ref{lem3:dimV_pi^H} since 
	$\pi_{\omega_7}|_{\fh_\C'}\simeq \sigma_{6\eta_1}\oplus\sigma_{6\eta_2}$ 
(see e.g.\ \cite[pp.~381]{LieART}; $\dim V_{\pi_{\omega_7}}=56$).

\smallskip
\noindent
$\bullet$
Isolated cases No.~24--26:
We have $G=\op{E}_7/\Z_2$, and the case follows from Proposition~\ref{prop:sufficient}\eqref{item3:E7/Z2}. 

\smallskip
\noindent
$\bullet$
Isolated cases No.~27--29:
We have $G=\op{E}_8$, and the case follows from Proposition~\ref{prop:sufficient}\eqref{item3:E8}. 

\smallskip
\noindent
$\bullet$
Isolated cases No.~30--31:
We have $G=\op{F}_4$, and the cases follow from  Proposition~\ref{prop:sufficient}\eqref{item3:F4} and Lemma~\ref{lem3:dimV_pi^H} since
	$\pi_{\omega_4}|_{\su(3)_\C\oplus\su(3)_\C}\simeq \sigma_{\eta_1}\widehat\otimes\,\sigma_{\eta_1'}' \oplus \sigma_{\eta_2}\widehat\otimes\,\sigma_{\eta_2'}' \oplus \sigma_{0}\widehat\otimes\,\sigma_{\eta_1'+\eta_2'}'
$ 
and 
	$\pi_{\omega_4}|_{\su(2)_\C\oplus (\fg_2)_\C}\simeq \sigma_{2\eta_1}\widehat\otimes\,\sigma_{\eta_1'}' \oplus \sigma_{4\eta_1}\widehat\otimes\,\sigma_{0}'$ 
(see e.g.\ \cite[pp.~387, 391]{LieART}; $\dim V_{\pi_{\omega_4}}=26$).

\smallskip
\noindent
$\bullet$
Isolated case No.~32:
We have $G=\op{G}_2$, and the case follows from  Proposition~\ref{prop:sufficient}\eqref{item3:G2} and Lemma~\ref{lem3:dimV_pi^H} since
	$\pi_{\omega_1}|_{\fh_\C'}\simeq \sigma_{6\eta_1}$ 
(see e.g.\ \cite[pp.~400]{LieART}; $\dim V_{\pi_{\omega_1}}=7$).

\subsection{Round spheres} \label{subsec4:roundspheres}

The isotropy irreducible simply connected spaces $\op{G}_2/\SU(3)$ (No.~33) and $\Spin(7)/\op{G}_2$ (No.~3) endowed with their standard metrics are round spheres of dimension $6$ and $7$ respectively. 
Let $G/H$ denote any of them. 
We will show first that $\lambda_1(G/H,g_{\st})<2E$. 
Additionally, we will show that $\lambda_1(G'/H',g_{\st})>2E'=2E$ for any strongly isotropy irreducible space $G'/H'$ associated to the pair $(\fg,\fh)$ as above.

We first mention a useful fact. 
Let $g_{\text{round}}$ denote the Riemannian metric with constant sectional curvature $1$ on the $d$-dimensional real projective space $P^d(\R)$.
It is well known that the Einstein factor of $(P^d(\R),g_{\text{round}})$ is $E=d-1$ and  
\begin{equation}\label{eq:lambda_1/E-roundP^d(R)}
\lambda_1(P^d(\R),g_{\text{round}})
=2(d+1)
=2E+4>2E. 
\end{equation}

\smallskip
\noindent
$\bullet$
Isolated case No.~33:
$G=\op{G}_2$ and $H\simeq \SU(3)$.
We have that
$\pi_{\omega_1}|_H\simeq \sigma_0\oplus\sigma_{\eta_1}\oplus\sigma_{\eta_2}$ (see e.g.\ \cite[pp.~ 392]{LieART}; $\dim V_{\pi_{\omega_1}}=7$).
Therefore $V_{\pi_{\omega_1}}^{H}\neq0$ by Lemma~\ref{lem3:dimV_pi^H}. 
Since $\lambda^{\pi_{\omega_1}}$ is minimum among $\lambda^{\pi}$ for $\pi\in\widehat {G}\smallsetminus\{1_{G}\}$ by Table~\ref{table:CasimirEigenvalues}, 
Theorem~\ref{thm:Spec(standard)} implies that 
$
\lambda_1(\op{G}_2/\SU(3),g_{\st})=\lambda^{\pi_{\omega_1}} =\frac12<\frac56=2E.
$

Let $G'/H'$ be a strongly isotropy irreducible space associated to the pair $(\fg_2,\su(3))$ such that $(G'/H',g_{\st})$ is not isometric to $(G/H,g_{\st})$. 
Since $(G/H,g_{\st})\to (G'/H',g_{\st})$ is a proper Riemannian covering and $(G/H,g_{\st})$ is isometric to a round $6$-sphere, we deduce that $(G'/H',g_{\st})$ is homothetic to $(P^6(\R),g_{\text{round}})$. 
It follows from \eqref{eq:lambda_1/E-roundP^d(R)} that 
$
\lambda_1(G'/H',g_{\st})>2E,
$
as claimed.

\smallskip
\noindent
$\bullet$
Isolated case No.~3:
$G=\Spin(7)$ and $H\simeq \op{G}_2$. 
One gets
$\pi_{\omega_3}|_H\simeq \sigma_0\oplus\sigma_{\eta_1}$
(see e.g.\ \cite[pp.~195]{LieART}; $\dim V_{\pi_{\omega_3}}=8$).
Thus $V_{\pi_{\omega_3}}^{H}\neq0$ by Lemma~\ref{lem3:dimV_pi^H}. 
Similarly as above, Theorem~\ref{thm:Spec(standard)} implies that 
$
\lambda_1(\Spin(7)/\op{G}_2,g_{\st}) =\lambda^{\pi_{\omega_3}} =\frac{21}{40}<\frac{9}{10}=2E.
$

Let $G'/H'$ be a strongly isotropy irreducible space associated to the pair $(\so(7),\fg_2)$ such that $(G'/H',g_{\st})$ is not isometric to $(G/H,g_{\st})$.
The strategy used in the previous case does not work here since odd-dimensional round spheres covers infinitely many Riemannian manifolds. 

We can assume that $G'$ acts effectively on $G'/H'$ without losing generality. 
According to the second part of Theorem~11.1 in \cite{Wolf68}, there is a central subgroup $Q$ of the center $Z:=Z(\Spin(7))$ of $\Spin(7)$ and a subgroup $H''$ of the normalizer $N_G(H)$ of $H$ in $G$ with $H=H_o''=N_G(H)_o$, such that
$
G'=G/Q
$
and 
$K'=(Q\cdot K'')/Q$.
Since $Z\simeq\Z_2$ and $N_G(H)=Z\cdot H$, it follows that $Q=Z(\Spin(7))$, $G'=\SO(7)$ and $H'=\SU(3)$. 
Hence $(G'/H',g_{\st})$ is homothetic to
$(P^6(\R),g_{\text{round}})$, thus 
$\lambda_1(G'/H',g_{\st})>2E$ by \eqref{eq:lambda_1/E-roundP^d(R)}. 

This completes the proof of Theorem~\ref{thm4:main}.

\subsection{Consequences on \texorpdfstring{$\nu$}{nu}-stability} \label{subsec4:nu-stability}

Theorem~\ref{thm4:main} ensures that there are no $\nu$-unstable conformal directions in every non-symmetric strongly isotropy irreducible space (notice the two exceptions are round spheres, so they are $\nu$-stable; see Remark~\ref{rem:spheres}). 
Consequently, any of these spaces established as H.stable is automatically $\nu$-stable. 
Schwahn in \cite{Schwahn-Lichnerowicz} did it for several of them. 
All this information is present in Table~\ref{table4:isotropyirred-families-lambda1} for the families, and inside Tables~\ref{table4:isotropyirredexcepcion-classical}--\ref{table4:isotropyirredexcepcion-exceptional} for the isolated cases.

\begin{table}
\caption{$\nu$-stability and $\lambda_1$ for the 10 families of simply connected strongly isotropy irreducible spaces.}\label{table4:isotropyirred-families-lambda1}

{\renewcommand{\arraystretch}{1.3}
	\begin{tabular}{ccccc} 
		No. & $\tfrac{\fg}{\fh}$ 
		& $2E$ & $\nu$-stable & 
		\begin{tabular}{c}
			\tiny computational calculation \\[-10pt] 
			\tiny of $\lambda_1(G/H,g_{\st})$
		\end{tabular}
		\\ [0.5ex]
		\hline \hline 
		I & $\tfrac{\su\left(\frac{n(n-1)}{2}\right)}{\su(n)}$ &
		$\frac{1}{2}+\frac{4}{n(n-2)}$ & 
		$8\leq n\leq 11$ &
		\tiny
		\begin{tabular}{c}
			$\lambda^{\pi_{3\omega_1}}=\tfrac{42}{25}$\;\; $n=6$\\
			$\lambda^{\pi_{2\omega_1+2\omega_{N}}}$\;\; 
			$n=5,7,8,9$
			\\
			where $N=\frac{n(n-1)}{2}-1$
		\end{tabular} 
		\\ [1.5ex]
		\hline 
		II & 
		$\tfrac{\su\left(\frac{n(n+1)}{2}\right)}{\su(n)}$  & 
		$\frac{1}{2}+\frac{4}{n(n+2)}$ & 
		$6\leq n\leq 8$ & 
		\tiny
		\begin{tabular}{cc}
			$\lambda^{\pi_{3\omega_1}}=\tfrac{15}{8}$\;\;$n=3$, 
			\\
			$\lambda^{\pi_{2\omega_1+2\omega_{N}}}$\;\;$4\leq n\leq 7$ 
			\\
			where $N=\frac{n(n+1)}{2}-1$
		\end{tabular}  
		
		\\[1.5ex] \hline 

		III	& 
		$\tfrac{\su(pq)}{\su(p)\oplus \su(q)}$ &
		$\frac{1}{2}+\frac{p^2+q^2}{p^2 q^2}$ & 
		{\tiny
			\begin{tabular}{l}
				$p=3$, $12\leq q\leq 16$, or
				\\[-3pt]
				$4\leq p\leq q$, $24\leq pq\leq 49$
			\end{tabular}
		} & $\lambda^{\pi_{\omega_2+\omega_{pq-2}}}\quad pq\leq 25$
		\\ [1.5ex] \hline 
		IV & $\tfrac{\spp(n)}{\spp(1)\oplus \so(n)}$ &
		$\frac{3}{4}+\frac{8-n}{4n(n+1)}$ && 
		$\lambda^{\pi_{2\omega_2}}$ \, $3\leq n\leq 14$
		\\ \hline 
		
		V & $\tfrac{\so(n^2-1)}{\su(n)}$ & 
		$\frac{1}{2}+\frac{2}{n^2-3}$ &&
		\scriptsize 
		\begin{tabular}{l}
			$\lambda^{\pi_{\omega_3+\omega_4}}=\frac{5}{4}\;\; n=3$\\[-2pt]
			$\lambda^{\pi_{\omega_3}}\;\; 4\leq n\leq 8$
		\end{tabular}
		\\ [1.5ex] \hline 
		\\ [-3ex]
		
		VI & $\tfrac{\so(2n^2-n-1)}{\spp(n)}$ & 
		$\frac{1}{2}+\frac{2}{(n-1)(n+1)(2n-3)}$ &&
		$\lambda^{\pi_{3\omega_1}} \quad  3\leq n\leq 6$
		\\[1.5ex] \hline 
		\\ [-3ex]
		
		VII  & $\tfrac{\so(2n^2+n)}{\spp(n)}$ & 
		$\frac{1}{2}+\frac{2}{2n^2+n-2}$  &
		$3\leq n\leq 13$&
		$\lambda^{\pi_{\omega_3}}\;\;  3\leq n\leq 6$
		\\ [1.5ex]
		\hline 
		
		VIII & $\tfrac{\so(4n)}{\spp(1)\oplus \spp(n)}$ &
		$\frac{3}{4}+\frac{n+4}{4n(2n-1)}$ &&
		$\lambda^{\pi_{\omega_4}}\;\; 2\leq n\leq 7$
		\\ [1.5ex]
		\hline 
		\\ [-3ex]
		
		IX   & $\tfrac{\so\left(\frac{n(n-1)}{2}\right)}{\so(n)}$ &
		$\frac{1}{2}+\frac{4}{n^2-n-4}$ & 
		$7\leq n\leq 27$ &
		$\lambda^{\pi_{\omega_3}}\;\; 7\leq n\leq 10$
		\\ [1.5ex]
		\hline 
		\\ [-3ex]

		X    & $\tfrac{\so\left(\frac{(n-1)(n+2)}{2}\right)}{\so(n)}$ &
		$\frac{1}{2}+\frac{4n}{(n-2)(n+2)(n+3)}$&&
		$\lambda^{\pi_{3\omega_1}}\;\; 5\leq n\leq 8$
		\\ [0.5ex] 
		\hline
	\end{tabular}
	
	\

}
\end{table}

\subsection{Computational results for isotropy irreducible cases}\label{subsec4:computational}
In addition to the (almost uniform) lower bound in Theorem~\ref{thm4:main}, we present computational results for the exact value of $\lambda_1(G/H,g_{\st})$. 
We covered several cases within the families of simply connected non-symmetric strongly isotropy irreducible spaces where computations were feasible, as well as for most of the isolated cases. 
In some instances, the computations exceeded the computer's memory capacity. These computational values are shown in the column $\lambda_1(G/H,g_{\st})$ in Tables~ \ref{table4:isotropyirredexcepcion-classical},  \ref{table4:isotropyirredexcepcion-exceptional} and \ref{table4:isotropyirred-families-lambda1}.

We next describe the algorithm that the authors implemented in \cite{Sage} to obtain the computational results and we show the implementation with a concrete example.

\begin{algorithm}[The smallest Laplace eigenvalue of a standard manifold]\label{algorithm}
Input: a compact, connected and semisimple Lie group $G$ of rank $n$, $H$ a closed subgroup of $G$. 

Output: $\lambda_1(G/H,g_{\st})$

Steps:
\begin{enumerate}
\item Find $\pi_0\in\widehat G$ such that $V_{\pi_0}^H\neq0$. 
For instance, running over $\pi_\Lambda$ with $\Lambda=\sum_{i=1}^na_i\omega_i\in \mathcal{P}^+(G)$,  $a_1+\dots+a_n\leq 5$. 

\item Compute $\lambda^{\pi_0}$ using \eqref{eq3:Freudenthal}.

\item Determine the set $\mathcal A\subset \PP^+(G)$ such that $\lambda^{\pi_{\Lambda}}\leq \lambda^{\pi_0}$ and $\Lambda\neq0$.

\item Return $\lambda_1(G/H,g_{\st})=\min\{\lambda^{\pi_{\Lambda}} : \Lambda\in\mathcal A\}$. 
\end{enumerate}
\end{algorithm}

\begin{example}
We run Algorithm \ref{algorithm} for the Isolated case No.~10, that is, we compute $\lambda_1(\Spin(14)/\op{G}_2, g_{\st})$, where the embedding of $\op{G}_2$ into $\Spin(14)$ is given by $\sigma_{\eta_2}$.

	\begin{lstlisting}
sage: G=WeylCharacterRing("D7",style="coroots")
sage: H=WeylCharacterRing("G2",style="coroots")
sage: b=branching_rule(G,"G2(0,1,0,0)",rule="plethysm")
sage: fw=list(G.fundamental_weights())
sage: fwH=list(H.fundamental_weights())
sage: rho=sum(fw[i] for i in range(7))
sage: def eigenvalue(mu):
sage:     return (1/24)*sum(mu[i]*(mu[i]+2*rho[i]) for i in range(7))
sage: def simplex(m,K):
sage:     if m==1: return [[i] for i in range(K+1)]
sage:     sol=[]
sage:     for k in range(K+1):
sage:         for v in simplex(m-1,K-k):
sage:             sol.append(v+[k])
sage:     return sol    
sage: lower_bound=2
sage: minimal_weight=0
sage: for a in simplex(7,5):
sage:     Lambda=sum(a[i]*fw[i] for i in range(7))
sage:     if eigenvalue(Lambda)<lower_bound and eigenvalue(Lambda)>0:
sage:         tau=G(Lambda).branch(H,rule=b)
sage:         mono=tau.monomials()
sage:         if H(0*fwH[0]) in mono:
sage:             lower_bound=eigenvalue(Lambda)
sage:             minimal_weight=Lambda
sage: print("Minimum eigenvalue =", lower_bound)
sage: print("Minimal weight =", minimal_weight)
Minimum eigenvalue = 11/8
Minimal weight = (1, 1, 1, 0, 0, 0, 0)
	\end{lstlisting}
The result means that $\lambda_1(\operatorname{Spin}(14)/\operatorname{G}_2,g_{st})=\lambda^{\pi_{\omega_3}}=\frac{11}{8}$.

\end{example}

\section{Isotropy reducible spaces}\label{sec:isotred}


\begin{sidewaystable}

\rule{0mm}{150mm}

\renewcommand{\arraystretch}{1.9} 

\caption{The 9 families of standard Einstein manifolds $(G/H,g_{\st})$ with $G$ simple and reducible isotropy representation}
\label{table5:isotropyred-families}

\centering
 \begin{tabular}{c c c c c c } 
 \hline 
 \hline 
No. & $\fg/\fh$  & Condition & $2E$ & H.stability &$\nu$-stability\\ [0.5ex]
\hline 
\hline 
XIa  & $\frac{\su(n)}{\R^{n-1}}$ 
	& $n\geq 3$ & $\frac{1}{2}+\frac{1}{n}$ & $G$-unstable & $\nu$-unstable \\ \hline 
XIb  & $\frac{\su(kn)}{k\su(n)\oplus(k-1)\R}$
	& $k\geq 3, n\geq 2$  & $\frac{1}{2}+\frac{1}{n}$ & $G$-unstable & $\nu$-unstable  \\ \hline 
XII  & $\frac{\su(l+pq)}{\su(l)\oplus\su(p)\oplus\su(q)\oplus \R}$
	& {\small $\begin{array}{c}	2\leq p\leq q \\[-8pt] lpq=p^2+q^2+1 \end{array}$ }
	&  $\frac{1}{2}+\frac{p^2+q^2}{(p^2+1)(q^2+1)}$ & $G$-unstable & $\nu$-unstable \\ \hline 
XIII & $\frac{\spp(kn)}{k\spp(n)}$
	& $k\geq 3, n\geq 1$  &  $\frac{1}{2}+\frac{2n+1}{2(kn+1)}$ & $G$-unstable & $\nu$-unstable  \\ \hline 
XIV  & $\frac{\spp(3n-1)}{\su(2n-1)\oplus\spp(n)\oplus\R}$
	& $n\geq 1$           & $\frac{5}{6}$             & $G$-unstable & $\nu$-unstable   \\ \hline 
XV   & $\frac{\so(4n^2)}{2\spp(n)}$ 
	& $n\geq 2$ & $\frac{1}{2}+\frac{2n+1}{2n(2n^2-1)}$ & 
	{\begin{tabular}{c}
	H.stable for $3\leq n\leq 9$,\\[-13pt]
	unknown otherwise
	\end{tabular}} & 
	{\begin{tabular}{c}
	$\nu$-stable for $3\leq n\leq 9$,\\[-13pt]
	unknown otherwise
	\end{tabular}} \\ \hline 
XVI  & $\frac{\so(n^2)}{2\so(n)}$
	& $n\geq 3$& $\frac{1}{2}+\frac{2n-2}{n(n^2-2)}$ &  
	{\small 
	\begin{tabular}{c}
	H.stable for $4\leq n\leq 16$, \\[-13pt] 
	unknown otherwise\\[-13pt]
	($G$-stable for $n\geq3$)
	\end{tabular} }& 
	{\begin{tabular}{c}
	$\nu$-stable for $4\leq n\leq 16$, \\[-13pt]
	unknown otherwise
	\end{tabular}} \\ \hline 
XVIIa & $\frac{\so(2n)}{\R^n}$
	& $n\geq 3$ & $\frac{1}{2}+\frac{1}{2(n-1)}$ &
	{\small \begin{tabular}{c}
	H.n.\ stable for $6\leq n\leq 7$, \\[-13pt]
	unknown otherwise\\[-13pt]
	($G$-neutrally stable for $n\geq3$)
	\end{tabular}} &
	{\begin{tabular}{c}
	n.\ $\nu$-stable for $6\leq n\leq 7$, \\[-13pt]
	unknown otherwise
	\end{tabular}} \\ \hline 
XVIIb & $\frac{\so(kn)}{k\so(n)}$ 
	& $k,n\geq 3$ & $\frac{1}{2}+\frac{n-1}{kn-2}$ & $G$-unstable & $\nu$-unstable \\ \hline 
XVIII & $\frac{\so(3n+2)}{\su(n+1)\oplus\so(n)\oplus\R}$
	& $n\geq 3$ & $\frac{5}{6}$ & $G$-unstable & $\nu$-unstable\\ \hline 
XIX  & $\frac{\so(n)}{\fh_1\oplus\cdots\oplus \fh_l}$
	& {$l\geq 1$, [9,\S7]} & $\frac{1}{2}+\frac{2\dim \fh_i}{(n-2)\dim\fp_i}$ & $G$-unstable & $\nu$-unstable \\ [1ex]
 \hline
 \end{tabular}
\end{sidewaystable}

\begin{sidewaystable}

\rule{0mm}{150mm}

\renewcommand{\arraystretch}{1.25} 

\centering 

\caption{Isolated standard Einstein manifolds $(G/H,g_{\st})$ with $G$ simple and reducible isotropy representation}
\label{table5:isotropyredexcepcions}
 \begin{tabular}{cc c c cccc} 
 \hline 
No.&  $\fg$   & $\fh$    & Embedding   & $2E$& H.stability &$\nu$-stability &\begin{tabular}{c}
lower bound \\[-6pt]
for $\lambda_1$
\end{tabular}\\ [0.5ex]
 \hline 
 \hline 
\rule{0pt}{2.5ex} 
34&$\so(8)$ &  $\mathfrak{g}_2$    & $\mathfrak{g}_2 \xhookrightarrow{\eta_1} \so(7)\subset \so(8)$          &  $\frac{5}{6}$  
	&$G$-unstable & $\nu$-unstable 
	&$\lambda^{\pi_{\omega_1}}=\frac{7}{12}$
	\\ [1ex] \hline 
\rule{0pt}{2.5ex} 
35&$\so(26)$ & {\tiny $\spp(1)\oplus\spp(5)\oplus\so(6)$} & $\spp(1)\oplus \spp(5) \xhookrightarrow{\eta_1+\eta_1'} \so(20) $             &  $\frac{29}{40}$   
	&$G$-unstable 
	& $\nu$-unstable  
	&$\lambda^{\pi_{2\omega_1}}=\frac{13}{12}$
	\\ [1ex]\hline 
\rule{0pt}{2.5ex} 
36&$\mathfrak{f}_4$ & $\so(8)$    & $\so(8)\subset\so(9) \overset{\text{max.}}{\subset} \ff_4$           & $\frac{8}{9}$  
	&$G$-unstable 
	& $\nu$-unstable  
	&$\lambda^{\pi_{\omega_4}}=\frac{2}{3}$
	\\[1ex] \hline  
\rule{0pt}{2.5ex} 
37&$\fe_6$ & $3\su(2)$   & 
	{\small \begin{tabular}{c}
		$\su(2)\xhookrightarrow{2\eta_1} \su(3)$,\\[-4pt]
		$3\su(3) \overset{\text{max.}}{\subset} \fe_6$
	\end{tabular}}  & 
	$\frac{5}{8}$  
	&H.semistable 
	& $\nu$-semistable 
	&  $\lambda^{\pi_{\omega_1+\omega_6}}=\frac{3}{2}$
	\\[1ex] \hline 
\rule{0pt}{2.5ex} 
38&$\fe_6$ & $\su(2)\oplus\so(6)$ & 
	{\small \begin{tabular}{c}
	$\so(6) \subset \su(6)$,\\[-2pt] 
	$\su(2)\oplus\su(6) \overset{\text{max.}}{\subset} \fe_6 $
	\end{tabular}} & $\frac{3}{4}$  
	&n.\ $G$-stable 
	& \begin{tabular}{c}
	unknown\\[-6pt]
	\tiny
	($\nu$-stable in \\[-8pt]
	\tiny conformal directions)
	\end{tabular}
	&$\lambda^{\pi_{\omega_3}}=\frac{25}{18}$
	\\ [1ex]\hline 
\rule{0pt}{2.5ex} 
39&$\fe_6$ & $\so(8)\oplus\R^2$  & 
	{\begin{tabular}{c}
	$\so(8)\oplus \R\subset\so(10)$,\\[-4pt]
	$\so(10)\oplus \R \overset{\text{max.}}{\subset} \fe_6$ 
	\end{tabular}}
	& $\frac{5}{6}$ 
	& $G$-unstable 
	&  $\nu$-unstable    
	&$\lambda^{\pi_{\omega_2}}=1$
	\\  [1ex]\hline 
\rule{0pt}{2.5ex} 
40&$\fe_6$ & $\R^6$  & \text{max.\ torus} & $\frac{7}{12}$ 
	& H.stable  
	& $\nu$-stable  
	&$\lambda^{\pi_{\omega_2}}=1$
	\\ [1ex]\hline 
\rule{0pt}{2.5ex} 
41&$\fe_7$ & $7\su(2)$   & 
	{\begin{tabular}{c}
	$3\so(4)\subset \so(12)$, \\[-3pt] 
	$\so(12)\oplus \su(2) \overset{\text{max.}}{\subset} \fe_7$ 
	\end{tabular}}
	& $\frac{2}{3}$ 
	& H.stable 
	& $\nu$-stable 
	&$\lambda^{\pi_{\omega_6}}=\frac{14}{9}$
	\\ [1ex]\hline 
\rule{0pt}{2.5ex} 
42&$\fe_7$ & $\so(8)$              & 
	$\so(8)\subset \su(8) \overset{\text{max.}}{\subset} \fe_7$ 
	& $\frac{13}{18}$  
	& H.stable 
	& $\nu$-stable  
	&$\lambda^{\pi_{\omega_2}}=\frac{35}{24}$
	 \\ [1ex]\hline 
\rule{0pt}{2.5ex} 
43&$\fe_7$ & $3\su(2)\oplus\so(8)$   & 
	{\begin{tabular}{c}
	$\so(8)\oplus\so(4)\subset \so(12)$, \\[-3 pt]
	$\so(12)\oplus \su(2) \overset{\text{max.}}{\subset}\fe_7$
	\end{tabular}} 
	& $\frac{7}{9}$   
	& $G$-unstable 
	& $\nu$-unstable 
	&$\lambda^{\pi_{\omega_6}}=\frac{14}{9}$
	\\ [1ex]\hline 
\rule{0pt}{2.5ex} 
44&$\fe_7$ & $\R^7$    & \text{max.\ torus} & $\frac{5}{9}$  
	&H.stable 
	& $\nu$-stable 
	&$\lambda^{\pi_{\omega_1}}=1$
	\\ [1ex]\hline 
\end{tabular}

\end{sidewaystable}

\addtocounter{table}{-1}

\begin{sidewaystable}

\rule{0mm}{150mm}

\renewcommand{\arraystretch}{1.25} 

\centering

\caption{Isolated standard Einstein manifolds $(G/H,g_{\st})$ with $G$ simple and reducible isotropy representation (continuation)}

 \begin{tabular}{cc c c cccc} 
 \hline 
No.&  $\fg$   & $\fh$    & Embedding   & $2E$& H.stability &$\nu$-stability &\begin{tabular}{c}
lower bound \\[-6pt]
for $\lambda_1$
\end{tabular}\\ [0.5ex]
 \hline 
 \hline 
\rule{0pt}{2.5ex} 
45&$\fe_8$ & $8\su(2)$ &
	$4\so(4)\subset\so(16)\overset{\text{max.}}{\subset}\fe_8$ 
	& $\frac{3}{5}$ 
	& H.stable 
	& $\nu$-stable
	&$\lambda^{\pi_{\omega_1}}=\frac{8}{5}$
	\\ [1ex]\hline 
\rule{0pt}{2.5ex} 
46&$\fe_8$ & $4\su(3)$   & 
	{\begin{tabular}{c}
	$3\su(3)\overset{\text{max.}}{\subset} \fe_6$, \\[-4pt] 
	$\fe_6\oplus\su(2)\overset{\text{max.}}{\subset}\fe_8$
	\end{tabular}} 
	& $\frac{19}{30}$  
	& H.stable 
	& $\nu$-stable 
	&$\lambda^{\pi_{\omega_1}}=\frac{8}{5}$
	\\ [1ex]\hline 
\rule{0pt}{2.5ex} 
47&$\fe_8$ & $4\su(2)$   & 
	{\begin{tabular}{c}
	$\su(2)\xhookrightarrow{2\eta_1} \su(3)$, \\[-4pt]
	$4\su(3)\subset \fe_8$ as above
	\end{tabular}} 
	& $\frac{11}{20}$ 
	& H.stable 
	& $\nu$-stable 
	&  $\lambda^{\pi_{\omega_1}}=\frac{8}{5}$
	\\ [1ex]\hline 
\rule{0pt}{2.5ex} 
48&$\fe_8$ & $2\su(3)$   & 
	{\begin{tabular}{c}
	$2\su(3)\xhookrightarrow{\eta_1+\eta_1'} \su(9)$,\\[-4pt] 
	$\su(9) \overset{\text{max.}}{\subset} \fe_8$
	\end{tabular}}  
	& $\frac{17}{30}$ 
	& H.stable 
	& $\nu$-stable 
	&$\lambda^{\pi_{\omega_1}}=\frac{8}{5}$
	\\ [1ex] \hline 
\rule{0pt}{2.5ex} 
49&$\fe_8$ & $2\su(5)$   & max.\ subalgebra  & $\frac{7}{10}$ 
	&H.stable 
	& $\nu$-stable 
	&$\lambda^{\pi_{\omega_1}}=\frac{8}{5}$
	\\ [1ex] \hline 
\rule{0pt}{2.5ex} 
50&$\fe_8$ & $\so(9)$   & $\so(9)\subset \su(9)\overset{\text{max.}}{\subset} \fe_8$  &  $\frac{13}{20}$ 
	&H.stable 
	& $\nu$-stable 
	&$\lambda^{\pi_{\omega_7}}=2$
	\\ [1ex] \hline 
\rule{0pt}{2.5ex} 
51&$\fe_8$ & $\so(9)$   & $\so(9)\xhookrightarrow{\eta_4} \so(16) \overset{\text{max.}}{\subset} \fe_8$  &  $\frac{13}{20}$ 
	&H.stable 
	& $\nu$-stable 
	&$\lambda^{\pi_{2\omega_8}}=\frac{31}{15}$
	\\ [1ex] \hline 
\rule{0pt}{2.5ex} 
52&$\fe_8$ & $2\so(8)$   & $2\so(8)\subset \so(16)\overset{\text{max.}}{\subset} \fe_8$ & $\frac{11}{15}$ 
	&H.stable 
	& $\nu$-stable
	&$\lambda^{\pi_{\omega_1}}=\frac{8}{5}$
	\\ [1ex] \hline 
\rule{0pt}{2.5ex} 
53&$\fe_8$ & $\so(5)$   & max.\ subalgebra  &  $\frac{13}{24}$ 
	&H.stable 
	& $\nu$-stable 
	&$\lambda^{\pi_{2\omega_8}}=\frac{31}{15}$
	\\ [1ex] \hline 
\rule{0pt}{2.5ex} 
54&$\fe_8$ & $2\spp(2)$   & $2\spp(2)\xhookrightarrow{\eta_1+\eta_1'} \so(16) \overset{\text{max.}}{\subset} \fe_8$ & $\frac{7}{12}$ 
	&H.stable 
	& $\nu$-stable 
	&$\lambda^{\pi_{\omega_1}}=\frac{8}{5}$\\ [1ex] \hline 
\rule{0pt}{2.5ex} 
55&$\fe_8$ & $\R^8$   & max.\ torus &  $\frac{8}{15}$  
	&H.stable 
	& $\nu$-stable 
	&$\lambda^{\pi_{\omega_8}}=1$
	\\ [1ex] \hline 
 \end{tabular}

\end{sidewaystable}


The homogeneous spaces $G/H$ with $G$ simple and reducible isotropy representation such that the standard metric $g_{\st}$ is Einstein were classified by Wang and Ziller~\cite{WangZiller85}. 
They consist of 12 infinite families (one of them is actually a conceptual construction) and 22 isolated examples (we have combined some families following \cite{Schwahn-Lichnerowicz} in such a way we have 9).

The $G$-stability of these spaces was studied in \cite{EJlauret-Stab}. 
Several cases were shown to be $G$-unstable, and consequently they are H.unstable and $\nu$-unstable. 
The goal of this section is to study the existence of $\nu$-unstable conformal directions of  them.
In other words, we want to determine whether $\lambda_1(G/H,g_{\st})> 2E$ holds or not.

\begin{remark}\label{rem5:Einsteinfactor}
There is a typo in \cite[Table 2]{Schwahn-Lichnerowicz} for the Einstein constant $E$ for Family XV. 
The correct value is $E=\frac{1}{4}+\frac{2n+1}{4n(2n^2-1)}$ as in Table~\ref{table5:isotropyred-families}. 
\end{remark}

\begin{remark}[Different presentations] \label{rem5:simplyconnected}
In \cite{WangZiller85}, the classification is given as pairs $(\fg,\fh)$ of Lie algebras (shown in Tables~\ref{table5:isotropyred-families}--\ref{table5:isotropyredexcepcions}) instead of simply connected effective realizations $G/H$ as in Section~\ref{sec:isotirred}. 
The embedding $(d\rho)_\C$ of $\fh$ into $\fg$ is also given in \cite{WangZiller85}, and it will be recalled in the cases where it is necessary. 

For each pair $(\fg,\fh)$, there may be several isotropy reducible spaces $G'/H'$ such that $\fg'=\fg$, $\fh'=\fh$ and $(G'/H',g_{\st})$ is Einstein. 
The Einstein factor $E$ is the same for all of them. 
We will always pick a realization with $G$ connected and $G/H$ simply connected, so $H$ is necessarily connected. 

Any other realization $G'/H'$ of $(\fg,\fh)$ satisfies that $(G/H,g_{\st})\to (G'/H',g_{\st})$ is a Riemannian covering, so 
$$
\lambda_1(G/H,g_{\st})\leq \lambda_1(G'/H',g_{\st}).
$$ 
Although the spaces covered by an $H$.unstable manifold are not necessarily $H$.unstable, one has that if $G/H$ is $G$-unstable then $\lambda_L(G'/H',g_{\st})<2E$ and therefore $(G'/H',g_{\st})$ is $\nu$-unstable for every presentation $G'/H'$ not necessarily simply connected.
We conclude that the $\nu$-stability type shown in Tables~\ref{table5:isotropyred-families}--\ref{table5:isotropyredexcepcions} holds for every presentation $G'/H'$. 
\end{remark}

The proof of Theorem~\ref{thm5:main} is given in Subsections~\ref{subsec5:flag}--\ref{subsec5:exceptional} as a case by case procedure using Proposition~\ref{prop:sufficient}.

\subsection{Full flag manifolds}\label{subsec5:flag}
Yamaguchi~\cite{Yamaguchi79} computed the first eigenvalue of the Laplace-Beltrami operator of all simply connected standard full flag manifolds $G/T_{\max}$ with $G$ simple. 
For those that are Einstein (Families XIa and XVIIa, and the Isolated cases No.~39, 44, 55), he obtained that 
\begin{equation}\label{eq5:flag-lambda1}
\lambda_1(G/T_{\max},g_{\st})=\lambda^{\Ad_G}=1. 
\end{equation}

Indeed, in the notation of Remark~\ref{rem2:integralweightlattice}, for any compact connected simple Lie group $G$, one has that $\widehat G_{T_{\max}}$ corresponds via the Highest Weight Theorem (see Remark~\ref{rem2:HighestWeightTheorem}) to the dominant weights in the root lattice $\RR(\fg_\C)$. 
A simple inspection in Table~\ref{table:CasimirEigenvalues} returns that there is no $\Lambda\in \RR(\fg_\C)\cap \PP^+(\fg_\C)$ with $\lambda^{\pi_{\Lambda}}<1$ if $\fg_\C$ is of type $A$, $D$, or $E$ (this is no longer true for types $B$, $C$, $F_4$ and $G_2$). 
Moreover, the highest weight $\Lambda_{\Ad_G}$ of the adjoint representation $\Ad_G$ of $G$, which is irreducible because $G$ is simple, is always in the root lattice $\RR(\fg_\C)$, and then \eqref{eq5:flag-lambda1} follows.

\subsection{Symplectic cases for isotropy reducible spaces}\label{subsec5:Sp}
In this subsection we consider the cases $G/H$ from Tables~\ref{table5:isotropyred-families}--\ref{table5:isotropyredexcepcions} with $\fg=\spp(N)$ for some $N\in\N$, namely, Families XIII and XIV. 
We will prove that  $\lambda_1(G/H,g_{\st})=\lambda^{\pi_{\omega_2}}<1$ for them. 
Although we know in advance that these cases are $\nu$-unstable for being $G$-unstable, we will explicitly determine whether $\lambda_1(G/H,g_{\st})>2E$ in order to determine which cases have $\nu$-unstable conformal directions.

\smallskip
\noindent
$\bullet$
Family XIII: 
$\fg=\spp(kn)$ and $\fh\simeq \spp(n)\oplus\dots\oplus\spp(n)$ ($k$-times) for $k\geq3$ and $n\geq1$. 

Set $H'=\Sp(n)\times\dots\times\Sp(n)$ ($k$-times) and 
\begin{equation}\label{eq5:rhoSp(nk)}
\rho=
{\textstyle \bigoplus\limits_{i=1}^k}\,
	\sigma_{0}^{(1)}
	\widehat\otimes\,\dots\widehat\otimes \,
	\sigma_{0}^{(i-1)} 
	\widehat\otimes\,
	\sigma_{\eta_1^{(i)}}^{(i)}
	\widehat\otimes\,
	\sigma_{0}^{(i+1)} 
	\widehat\otimes\,\dots\widehat\otimes\,
	\sigma_{0}^{(k)}
: H'\to\GL\Big({\textstyle \bigoplus\limits_{i=1}^k}\, \C^{2n}\Big)
. 
\end{equation}
Here, the superscript index $(i)$ means the $i$-th component. 
Each term of $\rho$ is clearly of quaternionic type, so is $\rho$ and the image of $\rho$ is included in $G_\rho=\Sp(nk)$. 
Although we do not have the effective realization $G/H$, we know that $G$ is a compact Lie group with Lie algebra $\spp(nk)$ and $H=\rho_0(H')\simeq H'/\op{Ker}(\rho_0)$, where $\rho_0:H'\to G$ is the only Lie group homomorphism with $d\rho_0=d\rho$.

We have that $V_{\pi_{\omega_1}}^{\fh_\C} =V_{\pi_{\omega_1}}^{(d\rho)_\C(\fh_\C')} = V_\rho^{\fh_\C'}$ by Lemma~\ref{lem3:standard}. 
Since $(d\rho)_{\C}$ does not contain the trivial representation of $\fh_\C'$, we conclude that  $V_{\pi_{\omega_1}}^{\fh_\C}=0$.

\begin{claim}\label{claim5:Sp(nk)-dimV_omega2^h=k-1}
$\dim V_{\omega_2}^H=k-1$.
\end{claim}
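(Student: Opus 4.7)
The plan is to compute $\dim V_{\pi_{\omega_2}}^{\fh_\C'}$ via the identity \eqref{eq4-Sp:Lambda^2st}, namely $\bigwedge^2\pi_{\omega_1}\simeq \pi_{\omega_2}\oplus 1_{\fg_\C}$, by first counting invariants in $\bigwedge^2\pi_{\omega_1}|_{\fh_\C'}$ and then subtracting one. Since $H=\rho_0(H')$ with $H'$ connected, $H$ is connected, so $\dim V_{\pi_{\omega_2}}^H=\dim V_{\pi_{\omega_2}}^{\fh_\C'}$, and the claim will follow.

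First I would write $(d\rho)_\C = \tau_1\oplus\dots\oplus\tau_k$, where $\tau_i$ denotes the representation of $\fh_\C'$ that is $\sigma_{\eta_1^{(i)}}^{(i)}$ on the $i$-th factor of $\fh_\C'$ and trivial on all other factors (so each $\tau_i$ is an irreducible $\fh_\C'$-module of dimension $2n$). Applying the $k$-fold iteration of \eqref{eq2:Lambda^2(V+W)-Sym^2(V+W)} yields
\begin{equation*}
{\textstyle \bigwedge^2}((d\rho)_\C)
\simeq \bigoplus_{i=1}^k {\textstyle \bigwedge^2}(\tau_i)
\oplus \bigoplus_{1\leq i<j\leq k} \tau_i\otimes\tau_j.
\end{equation*}
Since $\pi_{\omega_1}|_{\fh_\C'}=(d\rho)_\C$, the left-hand side is exactly $\bigwedge^2\pi_{\omega_1}|_{\fh_\C'}$.

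The main step is to count trivial $\fh_\C'$-subrepresentations on the right-hand side. For each $i$, applying \eqref{eq4-Sp:Lambda^2st} to the symplectic factor $\spp(n)_\C$ gives $\bigwedge^2(\tau_i)\simeq (\sigma_{\eta_2^{(i)}}^{(i)})\oplus \sigma_0^{(i)}$ (interpreting $\sigma_{\eta_2^{(i)}}^{(i)}$ as $0$ when $n=1$), where all other factors still act trivially; this contributes exactly one trivial $\fh_\C'$-summand. For $i<j$, the representation $\tau_i\otimes\tau_j$ restricted to the $i$-th and $j$-th factors is the irreducible module $\sigma_{\eta_1^{(i)}}^{(i)}\widehat\otimes\,\sigma_{\eta_1^{(j)}}^{(j)}$ (nontrivial on both factors), and trivial on the remaining factors; hence it is irreducible and nontrivial as an $\fh_\C'$-module, contributing no trivial summand.

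Thus $\dim V_{\bigwedge^2\pi_{\omega_1}}^{\fh_\C'}=k$. Restricting \eqref{eq4-Sp:Lambda^2st} to $\fh_\C'$ gives $\dim V_{\pi_{\omega_2}}^{\fh_\C'} = \dim V_{\bigwedge^2\pi_{\omega_1}}^{\fh_\C'} - 1 = k-1$, which by connectedness of $H$ equals $\dim V_{\pi_{\omega_2}}^H$. The only potential obstacle is bookkeeping for the edge case $n=1$, where $\omega_2$ does not exist for $\spp(1)$, but this does not affect the count of trivials from $\bigwedge^2(\tau_i)$, which remains one per factor.
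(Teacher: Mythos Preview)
Your proof is correct and follows essentially the same approach as the paper: decompose $\bigwedge^2((d\rho)_\C)$ via \eqref{eq2:Lambda^2(V+W)-Sym^2(V+W)}, count $k$ trivial summands (one from each $\bigwedge^2(\tau_i)$, none from the cross terms), and subtract one using \eqref{eq4-Sp:Lambda^2st}. You are in fact slightly more careful than the paper in explicitly justifying the passage from $H$-invariants to $\fh_\C'$-invariants via connectedness and in flagging the harmless $n=1$ edge case.
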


\begin{proof}
\renewcommand{\qedsymbol}{$\square$}
The proof is very similar to the one given in Family IV in Subsection~\ref{subsec4:Sp}, so we will omit several details. 
It follows from \eqref{eq2:Lambda^2(VxW)-Sym^2(VxW)} that the $2$-exterior power of the $i$-th term in \eqref{eq5:rhoSp(nk)} is
\begin{multline}
{\textstyle \bigwedge^2}\big(
	\sigma_{0}^{(1)}
	\widehat\otimes\,\dots\widehat\otimes\, 
	\sigma_{\eta_1^{(i)}}^{(i)}
	\widehat\otimes\,\dots\widehat\otimes\,
	\sigma_{0}^{(k)}
\big)
=	\sigma_{0}^{(1)}
	\widehat\otimes\,\dots\widehat\otimes \,
	{\textstyle \bigwedge^2}(\sigma_{\eta_1^{(i)}}^{(i)})
	\widehat\otimes\,\dots\widehat\otimes\,
	\sigma_{0}^{(k)}
\\
=	\sigma_{0}^{(1)}
	\widehat\otimes\,\dots\widehat\otimes \,
	\big(\sigma_{\eta_2^{(i)}}^{(i)}\oplus \sigma_{0}^{(i)}\big)
	\widehat\otimes\,\dots\widehat\otimes\,
	\sigma_{0}^{(k)}
= 1_{\fh_\C'} 
\oplus \,
	\sigma_{0}^{(1)}
	\widehat\otimes\,\dots\widehat\otimes \,
	\sigma_{\eta_2^{(i)}}^{(i)}
	\widehat\otimes\,\dots\widehat\otimes\,
	\sigma_{0}^{(k)}
.
\end{multline}
Now, \eqref{eq2:Lambda^2(V+W)-Sym^2(V+W)} gives 
\begin{equation}
\begin{aligned}
{\textstyle\bigwedge^2}((d\rho)_\C)&
=k\cdot1_{\fh_\C'} \oplus  
{\textstyle \bigoplus\limits_{i=1}^k}\,
	\sigma_{0}^{(1)}
	\widehat\otimes\,\dots\widehat\otimes \,
	\sigma_{\eta_2^{(i)}}^{(i)}
	\widehat\otimes\,\dots\widehat\otimes\,
	\sigma_{0}^{(k)}
\\ & \quad 
\oplus
{\textstyle \bigoplus\limits_{1\leq i<j\leq k}}\,
	\sigma_{0}^{(1)}
	\widehat\otimes\,\dots\widehat\otimes \,
	\sigma_{\eta_1^{(i)}}^{(i)} 
	\widehat\otimes\,\dots\widehat\otimes \,
	\sigma_{\eta_1^{(j)}}^{(j)}
	\widehat\otimes\,\dots\widehat\otimes \,
	\sigma_{0}^{(k)}
.
\end{aligned}
\end{equation}
This tells us that $\big({\textstyle \bigwedge^2}(\pi_{\omega_1})\big)|_{\fh_\C'}$ contains $k$-times the trivial representation $1_{\fh_\C'}$. 
Since $\bigwedge^2\pi_{\omega_1}=\pi_{\omega_2}\oplus1_{\fg_\C}$, we conclude that $\pi_{\omega_2}|_{\fh_\C'}$ contains $(k-1)$-copies of $1_{\fh_\C'}$, which proves the assertion. 
\end{proof}

Note that $G$ is isomorphic to $\Sp(nk)$ or $\Sp(nk)/\Z_2$, and $\omega_2\in\PP^+(G)$ for both cases. 
From Theorem~\ref{thm:Spec(standard)}, combining Table~\ref{table:CasimirEigenvalues} and the facts $V_{\pi_{\omega_1}}^H=0$ and $V_{\pi_{\omega_2}}^H\neq0$ by Claim~\ref{claim5:Sp(nk)-dimV_omega2^h=k-1}, we conclude that 
\begin{equation*}
\lambda_1(G/H,g_{\st})=\lambda^{\pi_{\omega_2}}=\frac{nk}{nk+1}<1. 
\end{equation*}

Since $2E=\frac12+\frac{2n+1}{2(nk+1)}$, it is a simple matter to check that $\lambda_1(G/H,g_{\st})>2E$ if and only if $n(k-2)>2$, which always holds excepting the case $n=1$ and $k=3$. In particular, 
\begin{equation}
\lambda_1\left(G/H ,g_{\st}\right)=\frac{3}{4}
<
\frac78=2E
\end{equation}
when $\fg\simeq\spp(3)$ and $\fh\simeq \spp(1)\oplus\spp(1)\oplus\spp(1)$. 
This space has $\nu$-unstable conformal directions.

\smallskip
\noindent
$\bullet$
Family XIV: 
$\fg=\spp(3n-1)$ and $\fh\simeq \spp(n)\oplus\ut(2n-1)$ for $n\geq1$. 

Set $H'=\Sp(n)\times\Ut(2n-1)$ and 
\begin{equation}\label{eq5:rhoSp(3n-1)}
\rho=
	\sigma_{\eta_1}
	\widehat\otimes
	\sigma_{0}'
	\,\oplus\, 
	\sigma_{0}
	\widehat\otimes
	\big(\sigma_{\eta_1'}' \oplus \sigma_{\eta_{2n-2}'}'\big)
: H'\to\GL\left(\C^{2n}\oplus \C^{2(2n-1)} \right)
. 
\end{equation}
Note that $(\sigma_{\eta_1'}')^*\simeq\sigma_{\eta_{2n-2}'}'$, thus $\sigma_{\eta_1'}' \oplus \sigma_{\eta_{2n-2}'}'$ has a quaternionic structure (see Remark~\ref{rem2:types-of-representations}), as well as $\sigma_{\eta_1} \widehat\otimes \sigma_{0}'$. 
Thus $\rho(H')\subset \Sp(3n-1)$. 

We have that $V_{\pi_{\omega_1}}^{\fh_\C} =V_{\pi_{\omega_1}}^{(d\rho)_\C(\fh_\C')} = V_\rho^{\fh_\C'}$ by Lemma~\ref{lem3:standard}. 
Since $(d\rho)_{\C}$ does not contain the trivial representation of $\fh_\C'$, we conclude that  $V_{\pi_{\omega_1}}^{\fh_\C}=0$.

\begin{claim}\label{claim5:Sp(3n-1)-dimV_omega2^h=k-1}
$\dim V_{\omega_2}^H=1$.
\end{claim}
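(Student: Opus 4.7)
The plan is to mimic the bookkeeping used in Claim~\ref{claim5:Sp(nk)-dimV_omega2^h=k-1}: compute the multiplicity of $1_{\fh_\C'}$ inside $\bigwedge^2\pi_{\omega_1}|_{\fh_\C'}=\bigwedge^2((d\rho)_\C)$ and then subtract the single trivial summand coming from the splitting $\bigwedge^2\pi_{\omega_1}\simeq\pi_{\omega_2}\oplus 1_{\fg_\C}$ of \eqref{eq4-Sp:Lambda^2st}. The first step is to decompose $(d\rho)_\C=\tau_1\oplus\tau_2\oplus\tau_3$, with
\[
\tau_1=\sigma_{\eta_1}\widehat\otimes\sigma_0',\qquad
\tau_2=\sigma_0\widehat\otimes\sigma_{\eta_1'}',\qquad
\tau_3=\sigma_0\widehat\otimes\sigma_{\eta_{2n-2}'}',
\]
and expand $\bigwedge^2(\tau_1\oplus\tau_2\oplus\tau_3)$ through the three-term version of \eqref{eq2:Lambda^2(V+W)-Sym^2(V+W)} combined with \eqref{eq2:Lambda^2(VxW)-Sym^2(VxW)}.

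The key observation I would then invoke is that the central $\R$-factor of $\ut(2n-1)=\su(2n-1)\oplus\R$ acts on $\sigma_{\eta_1'}'$ and on $\sigma_{\eta_{2n-2}'}'\simeq(\sigma_{\eta_1'}')^*$ with opposite nonzero weights (this is precisely what forces the quaternionic structure required by $\rho(H')\subset\Sp(3n-1)$). Hence $1_{\fh_\C'}$ can only contribute in the $\ut(1)$-weight-zero pieces of $\bigwedge^2((d\rho)_\C)$. An immediate inspection rules out $\bigwedge^2\tau_2$ (weight $+2$), $\bigwedge^2\tau_3$ (weight $-2$), $\tau_1\otimes\tau_2$ (weight $+1$) and $\tau_1\otimes\tau_3$ (weight $-1$), leaving only two candidate summands: $\bigwedge^2\tau_1\simeq(\bigwedge^2\sigma_{\eta_1})\widehat\otimes\sigma_0'$, which contributes exactly one copy of $1_{\fh_\C'}$ via the standard identity $\bigwedge^2\sigma_{\eta_1}\simeq\sigma_{\eta_2}\oplus\sigma_0$ (degenerating to $\bigwedge^2\sigma_{\eta_1}\simeq\sigma_0$ when $n=1$), and $\tau_2\otimes\tau_3=\sigma_0\widehat\otimes(\sigma_{\eta_1'}'\otimes(\sigma_{\eta_1'}')^*)$, which contributes exactly one further copy via $V\otimes V^*\supset 1$.

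Adding the two contributions gives multiplicity $2$ for $1_{\fh_\C'}$ inside $\bigwedge^2((d\rho)_\C)$, whence multiplicity $1$ inside $\pi_{\omega_2}|_{\fh_\C'}$; connectedness of $H$ then upgrades this to $\dim V_{\pi_{\omega_2}}^H=1$. The mildest technical wrinkle I foresee is the degenerate case $n=1$, where $\ut(2n-1)=\ut(1)$ is abelian and $\sigma_{\eta_2}$ does not exist for $\spp(1)\simeq\su(2)$; the $\ut(1)$-weight argument is nevertheless uniform in $n$ and collapses these small cases to the same count, so this should be cosmetic rather than a genuine obstacle.
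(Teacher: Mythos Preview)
Your proof is correct and follows essentially the same route as the paper: both compute the multiplicity of $1_{\fh_\C'}$ in $\bigwedge^2((d\rho)_\C)$ via \eqref{eq2:Lambda^2(V+W)-Sym^2(V+W)} and \eqref{eq2:Lambda^2(VxW)-Sym^2(VxW)}, find it equals $2$, and then subtract the one copy coming from \eqref{eq4-Sp:Lambda^2st}. The only cosmetic difference is that the paper splits $(d\rho)_\C$ into two summands (the $\spp(n)$-piece and the full $\ut(2n-1)$-piece $\sigma_{\eta_1'}'\oplus\sigma_{\eta_{2n-2}'}'$) and expands everything explicitly, whereas you split into three irreducible summands and use the central $\ut(1)$-weight to discard four of the six pieces at a glance; your bookkeeping is slightly cleaner but the substance is identical.
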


\begin{proof}
\renewcommand{\qedsymbol}{$\square$}
It follows from \eqref{eq2:Lambda^2(V+W)-Sym^2(V+W)} and \eqref{eq2:Lambda^2(VxW)-Sym^2(VxW)} that
\begin{equation}
\begin{aligned}
{\textstyle\bigwedge^2}\big((d\rho)_\C\big)&
= \sigma_{\eta_1}
	\widehat\otimes
	\big(\sigma_{\eta_1'}' \oplus \sigma_{\eta_{2n-2}'}'\big)
	\,\oplus\, 
{\textstyle\bigwedge^2}\big(  
	\sigma_{\eta_1}
	\widehat\otimes
	\sigma_{0}'
\big)
	\,\oplus\, 
{\textstyle\bigwedge^2}\big(  
	\sigma_{0}
	\widehat\otimes
	\big(\sigma_{\eta_1'}' \oplus \sigma_{\eta_{2n-2}'}'\big)
\big)
\\ & 
=
	\sigma_{\eta_1}
	\widehat\otimes
	\big(\sigma_{\eta_1'}' \oplus \sigma_{\eta_{2n-2}'}'\big)
	\,\oplus\, 
	\big(
		\sigma_{\eta_2}
		\oplus 
		\sigma_0
	\big)
	\widehat\otimes
	\sigma_{0}'
	\,\oplus\, 
\\ & \quad \oplus 
	\sigma_{\eta_2}
	\widehat\otimes
	\big(
		\sigma_{0}'
		\oplus
		\sigma_{\eta_2'}'
		\oplus
		\sigma_{\eta_{2n-3}'}'
		\oplus
		\sigma_{\eta_{1}'}'\widehat{\otimes} \sigma_{\eta_{2n-2}'}'
	\big)
.
\end{aligned}
\end{equation}
Since $\sigma_{\eta_{1}'}'\otimes \sigma_{\eta_{2n-2}'}'\simeq \sigma_0'\oplus \sigma_{\eta_1'+\eta_{2n-2}'}'$, $1_{\fh_\C'}$ occurs exactly twice in ${\textstyle\bigwedge^2}\big((d\rho)_\C\big)$. 
The assertion follows from $\bigwedge^2\pi_{\omega_1}|_{\fh_\C'}=\pi_{\omega_2}|_{\fh_\C'} \oplus1_{\fh_\C'}$. 
\end{proof}

Similarly as in the previous family, we conclude that 
$
\lambda_1(G/H,g_{\st})=\lambda^{\pi_{\omega_2}} =\frac{3n-1}{3n}<1. 
$
Since $2E=\frac{5}{6}$, $\lambda_1(G/H,g_{\st})>2E$ if and only if $n>2$.
In particular, only the cases $n=1,2$ have $\nu$-unstable conformal directions.

\subsection{Special unitary cases for isotropy reducible spaces}\label{subsec5:SU}
In this subsection we prove that  $\lambda_1(G/H,g_{\st})\geq1$ for all cases $G/H$ from Tables~\ref{table5:isotropyred-families}--\ref{table5:isotropyredexcepcions} with $\fg=\su(N)$ for some $N\in\N$, namely, Families XI and XII. 
The result was established for Family XIa in Subsection~\ref{subsec5:flag}, so we will omit it here. 

\smallskip
\noindent
$\bullet$
Family XIb: 
$\fg=\su(kn)$ and $\fh\simeq \mathfrak{s}(\ut(n)\oplus\dots\oplus\ut(n)\big)$ ($k$-times) for $k\geq3$ and $n\geq2$. 

Set $H'=\textup{S}(\Ut(n)\times\dots\times\Ut(n))$ ($k$-times) and 
\begin{equation}\label{eq5:rhoSU(nk)}
\rho=
{\textstyle \bigoplus\limits_{i=1}^k}\,
	\sigma_{0}^{(1)}
	\widehat\otimes\,\dots\widehat\otimes \,
	\sigma_{0}^{(i-1)} 
	\widehat\otimes\,
	\sigma_{\eta_1^{(i)}}^{(i)}
	\widehat\otimes\,
	\sigma_{0}^{(i+1)} 
	\widehat\otimes\,\dots\widehat\otimes\,
	\sigma_{0}^{(k)}
: H'\to\GL\Big({\textstyle \bigoplus\limits_{i=1}^k}\, \C^{n}\Big)
. 
\end{equation}
The image of $\rho$ is included in $G_\rho=\SU(nk)$. 
Although we do not have the effective realization $G/H$, we know that $G$ is a compact Lie group with Lie algebra $\su(nk)$ and $H=\rho_0(H')\simeq H'/\op{Ker}(\rho_0)$, where $\rho_0:H'\to G$ is the only Lie group homomorphism with $d\rho_0=d\rho$.

\begin{lemma}\label{lem5:V_omega_p^T_max=0}
Let $T_{\max}$ be a maximal torus of $\SU(N)$. 
Then $\dim V_{\pi_{\omega_p}}^{T_{\max}}=0$ for all $1\leq p\leq N-1$. 
\end{lemma}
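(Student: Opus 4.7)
The plan is to compute the zero weight space of $\pi_{\omega_p}$ directly. Since $T_{\max}$ is connected, $V_{\pi_{\omega_p}}^{T_{\max}}$ equals the subspace of $V_{\pi_{\omega_p}}$ on which $\ft_\C$ acts by zero, i.e.\ the weight space corresponding to the zero weight. So the whole problem reduces to showing that no weight of $\pi_{\omega_p}$ restricts to the zero functional on $\ft$.

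First I would identify $\pi_{\omega_p}$ concretely: it is the $p$-th fundamental representation of $\SU(N)$, realized as the exterior power $\bigwedge^p\st$, where $\st$ is the standard $N$-dimensional representation. Picking the usual diagonal maximal torus, the weights of $\st$ are $\ee_1|_\ft,\dots,\ee_N|_\ft$ (in the notation of \ref{notation2:Bourbaki}), and therefore the weights of $\pi_{\omega_p}$ are exactly the restrictions to $\ft$ of the functionals
\[
\ee_{i_1}+\ee_{i_2}+\dots+\ee_{i_p}, \qquad 1\leq i_1<i_2<\dots<i_p\leq N,
\]
each with multiplicity one.

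Next I would check that none of these restrictions is the zero functional on $\ft$. Recall that $\mi\ft\subset \bigoplus_{j=1}^N\R\ee_j$ is the hyperplane $\{(a_1,\dots,a_N):\sum_j a_j=0\}$. The linear form $\ee_{i_1}+\dots+\ee_{i_p}$ on $\bigoplus_j\R\ee_j$ vanishes on this hyperplane if and only if it is a scalar multiple of $\ee_1+\dots+\ee_N$, which forces the coefficients to be constant; this constant must be simultaneously $1$ (on the chosen indices) and $0$ (on the others), which is impossible unless $p=0$ or $p=N$. Since $1\leq p\leq N-1$, every weight of $\pi_{\omega_p}$ is nonzero, so the zero weight space is trivial, and hence $V_{\pi_{\omega_p}}^{T_{\max}}=0$.

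There is essentially no obstacle here: the only small point of care is bookkeeping between the ambient space $\bigoplus_j\R\ee_j$ used in Notation~\ref{notation2:Bourbaki} and the actual dual $(\mi\ft)^*$, but the reformulation ``vanishes on the hyperplane $\sum a_j=0$'' makes the computation immediate.
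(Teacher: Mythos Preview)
Your proposal is correct and follows essentially the same approach as the paper: identify $V_{\pi_{\omega_p}}^{T_{\max}}$ with the zero weight space, list the weights of $\bigwedge^p\st$ as $\pr(\ee_{i_1}+\dots+\ee_{i_p})$, and observe that none of them is zero for $1\leq p\leq N-1$. The paper states this a bit more tersely, but your explicit hyperplane argument is the same computation spelled out.
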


\begin{proof}
We have that $V_{\pi_{\omega_p}}^{T_{\max}}=V_{\pi_{\omega_p}}(0)$, the weight space of $\pi_{\omega_p}$ associated to the weight $0$. 
Consequently, it is enough to show that $0\notin \PP(\pi_{\omega_p})$. This follows since the set of weights of $\pi_{\omega_p}$ are  $\pr(\ee_{i_1}+\ee_{i_2}+\dots+\ee_{i_p})$ (see Notation~\ref{notation2:Bourbaki}) for $1\leq i_1<i_2<\dots<i_p\leq N$, all of them with multiplicity one. 
This can be easily seen from $V_{\pi_{\omega_p}} \simeq{\textstyle \bigwedge^p} (\C^N)$ and the fact that $e_{i_1}\wedge\dots\wedge e_{i_p}$ is a weight vector with weight $\pr(\ee_{i_1}+\ee_{i_2}+\dots+\ee_{i_p})$.
\end{proof}

We set $N=kn$. 
The maximal torus $T_{\max}$ of $\SU(N)$ given by diagonal elements is also a maximal torus of $H$, that is, $H\cap T_{\max}=T_{\max}$. 
The fact $T_{\max}\subset H$ and Lemma~\ref{lem5:V_omega_p^T_max=0} yield 
\begin{equation}\label{eq5:V_omega_p^H=0}
V_{\pi_{\omega_p}}^H
\subset V_{\pi_{\omega_p}}^{T_{\max}}=0
\quad\forall \, 1\leq p\leq N-1. 
\end{equation}
We conclude by  Proposition~\ref{prop:sufficient}\eqref{item3:su-N!=6,7}--\eqref{item3:su-N==6,7} that $\lambda_1(G/H,g_{\st})\geq1$.

\smallskip
\noindent
$\bullet$
Family XII: 
$\fg=\su(pq+l)$ and $\fh\simeq \su(p)\oplus\su(q)\oplus \su(l)\oplus\R$ for positive integers $p,q,l$ satisfying $q\geq p\geq2$ and $lpq=p^2+q^2+1$.

According to Example 8 (page 576) in \cite{WangZiller85}, we define $H'=\textup{S}(\Ut(p)\times\Ut(q)\times\Ut(l))$ and 
\begin{equation}\label{eq5:rhoSU(pq+l)}
\rho=
	\sigma_{\eta_1}
	\widehat\otimes
	\sigma_{\eta_1}'
	\widehat\otimes
	\sigma_{0}''
	\,\oplus\, 
	\sigma_{0}
	\widehat\otimes
	\sigma_0'
	\widehat\otimes
	\sigma_{\eta_1''}''
: H'\to\GL\left(\C^{p}\otimes\C^q\oplus \C^{l} \right)
. 
\end{equation}
Note that 
$(d\rho)_\C$ is not injective since its first term above has a one-dimensional kernel, so $\fh\simeq \fh'/\op{Ker}((d\rho)_\C)$ and $H\simeq H'/\op{Ker}(\rho_0)$, where $\rho_0:H'\to G$ is the only Lie group homomorphism with $d\rho_0=d\rho$.

One can easily check that the solution $(p,q,l)$ with $q$ as small as possible is $(p,q,l)=(2,5,3)$. 
Hence $N:=pq+l\geq 5\cdot 2+l>10$. 
By Proposition~\ref{prop:sufficient}\eqref{item3:su-N!=6,7}, it is sufficient to show that $V_{\pi_{\omega_i}}^{\fh_\C}=0$ for $i=1,2$. 

We have that $V_{\pi_{\omega_1}}^{\fh_\C}=0$ since $\pi_{\omega_1}$ is the standard representation of $\fg_\C$ and the one-dimensional kernel of $(d\rho)_\C$ is ruled out because $\fh\simeq \fh'/\op{Ker}((d\rho)_\C)$.

\begin{claim}\label{claim5:SU(pq+l)-dimV_omega2^h=0}
$\dim V_{\omega_2}^H=0$.
\end{claim}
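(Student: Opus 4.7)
The plan is to compute the decomposition of $\pi_{\omega_2}|_{\fh_\C}$ into irreducible $\fh_\C$-modules and check that the trivial representation of $\fh_\C$ does not occur; by Lemma~\ref{lem3:dimV_pi^H} this will give $V_{\pi_{\omega_2}}^H=0$. Since $\pi_{\omega_2} \simeq \bigwedge^2 \pi_{\omega_1}$ and $\pi_{\omega_1}|_{\fh_\C'}=(d\rho)_\C=W_1\oplus W_2$ with
$W_1=\sigma_{\eta_1}\widehat{\otimes}\sigma_{\eta_1}'\widehat{\otimes}\sigma_0''$ and $W_2=\sigma_0\widehat{\otimes}\sigma_0'\widehat{\otimes}\sigma_{\eta_1''}''$, the identities in \eqref{eq2:Lambda^2(V+W)-Sym^2(V+W)} and \eqref{eq2:Lambda^2(VxW)-Sym^2(VxW)} give
\begin{equation*}
{\textstyle\bigwedge^2} ((d\rho)_\C)\simeq {\textstyle\bigwedge^2} W_1 \,\oplus\, W_1\otimes W_2 \,\oplus\, {\textstyle\bigwedge^2} W_2,
\end{equation*}
and further
\begin{equation*}
\begin{aligned}
{\textstyle\bigwedge^2} W_1 &\simeq {\textstyle\bigwedge^2}(\sigma_{\eta_1}) \widehat{\otimes}\Sym^2(\sigma_{\eta_1}') \widehat{\otimes}\sigma_0''\,\oplus\,\Sym^2(\sigma_{\eta_1})\widehat{\otimes}{\textstyle\bigwedge^2}(\sigma_{\eta_1}')\widehat{\otimes}\sigma_0'',\\
W_1\otimes W_2 &\simeq \sigma_{\eta_1}\widehat{\otimes}\sigma_{\eta_1}'\widehat{\otimes}\sigma_{\eta_1''}'',\\
{\textstyle\bigwedge^2} W_2 &\simeq \sigma_0\widehat{\otimes}\sigma_0'\widehat{\otimes}{\textstyle\bigwedge^2}(\sigma_{\eta_1''}'').
\end{aligned}
\end{equation*}

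Next, I would inspect each summand and verify that it is not the trivial representation of $\fh_\C'$. A preliminary number-theoretic step is needed: the constraint $lpq=p^2+q^2+1$ with $2\leq p\leq q$ rules out $l\leq 2$ (since $l=1$ forces $p^2-pq+q^2+1=0$, and $l=2$ forces $(p-q)^2=-1$), so $l\geq 3$ and consequently ${\textstyle\bigwedge^2}(\sigma_{\eta_1''}'') \simeq \sigma_{\eta_2''}''$ is a non-trivial $\su(l)$-irrep; thus $\bigwedge^2 W_2$ is non-trivial. The summand $W_1\otimes W_2$ involves the standard representation of each of the three simple factors $\su(p),\su(q),\su(l)$, none of which is trivial since $p,q,l\geq 2$. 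For $\bigwedge^2 W_1$, the factor $\Sym^2(\sigma_{\eta_1})\simeq \sigma_{2\eta_1}$ is a non-trivial $\su(p)$-irrep for $p\geq 2$ and $\Sym^2(\sigma_{\eta_1}')\simeq \sigma_{2\eta_1'}'$ is a non-trivial $\su(q)$-irrep for $q\geq 2$, so each of the two pieces is non-trivial as an $\fh_\C'$-module (this argument covers the boundary case $p=2$, in which $\bigwedge^2\sigma_{\eta_1}$ becomes trivial but the $\sigma_{2\eta_1'}'$ factor saves the day).

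Since none of the irreducible summands listed above is the trivial representation of $\fh_\C'$, neither of them is the trivial representation of the quotient $\fh_\C\simeq\fh_\C'/\ker((d\rho)_\C)$. Therefore the trivial representation of $\fh_\C$ does not occur in $\pi_{\omega_2}|_{\fh_\C}$, and Lemma~\ref{lem3:dimV_pi^H} gives $V_{\pi_{\omega_2}}^H=0$. The main (very mild) obstacle is handling the small-rank case $p=2$, but this is absorbed by choosing, within each summand of $\bigwedge^2 W_1$, the companion factor on $\su(q)$ as the witness of non-triviality.
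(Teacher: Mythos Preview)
Your argument is correct and essentially identical to the paper's: both compute $\bigwedge^2((d\rho)_\C)$ via \eqref{eq2:Lambda^2(V+W)-Sym^2(V+W)}--\eqref{eq2:Lambda^2(VxW)-Sym^2(VxW)} and observe that no summand is the trivial $\fh_\C'$-module. Your extra care in verifying $l\geq 3$ and in handling the boundary case $p=2$ (where $\bigwedge^2\sigma_{\eta_1}$ degenerates to the trivial $\su(2)$-module) is more explicit than the paper, which relies on the earlier observation that the smallest solution is $(p,q,l)=(2,5,3)$ and writes $\sigma_{\eta_2},\sigma_{\eta_2'}',\sigma_{\eta_2''}''$ without comment.
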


\begin{proof}
\renewcommand{\qedsymbol}{$\square$}
It is enough to show that $V_{\pi_{\omega_2}}^{\fh_\C'}=0$ since $V_{\pi_{\omega_2}}^H =V_{\pi_{\omega_2}}^{\fh_\C} \subset V_{\pi_{\omega_2}}^{\fh_\C'}$. 

We have that ${\textstyle\bigwedge^2}(\pi_{\omega_1})\simeq \pi_{\omega_2}$. 
Thus $\pi_{\omega_2}|_{\fh_\C'}\simeq {\textstyle\bigwedge^2}(\pi_{\omega_1}|_{\fh_\C'})
= {\textstyle\bigwedge^2}( (d\rho)_\C )$. 
Hence, it is sufficient to show that ${\textstyle\bigwedge^2}( (d\rho)_\C )$ does not contain the trivial representation.
It follows from \eqref{eq2:Lambda^2(V+W)-Sym^2(V+W)} and \eqref{eq2:Lambda^2(VxW)-Sym^2(VxW)} that
\begin{equation}
\begin{aligned}
{\textstyle\bigwedge^2}\big((d\rho)_\C\big)&
= 
{\textstyle\bigwedge^2}\big(
	\sigma_{\eta_1}
	\widehat\otimes
	\sigma_{\eta_1}'
	\widehat\otimes
	\sigma_{0}''
\big)
\,\oplus\, 
{\textstyle\bigwedge^2}\big(
	\sigma_{0}
	\widehat\otimes
	\sigma_0'
	\widehat\otimes
	\sigma_{\eta_1''}''\big)
\,\oplus\, 
\big(
	\sigma_{\eta_1}
	\widehat\otimes
	\sigma_{\eta_1}'
	\widehat\otimes
	\sigma_{0}''
\big)
\otimes 
\big(
	\sigma_{0}
	\widehat\otimes
	\sigma_0'
	\widehat\otimes
	\sigma_{\eta_1''}''
\big)
\\ & 
=
{\textstyle\bigwedge^2}\big(
	\sigma_{\eta_1}
	\widehat\otimes
	\sigma_{\eta_1}'
\big)
	\widehat\otimes
	\sigma_{0}''
	\,\oplus\, 
	\sigma_{0}
	\widehat\otimes
	\sigma_0'
	\widehat\otimes
{\textstyle\bigwedge^2}\big(
	\sigma_{\eta_1''}''\big)
	\,\oplus\, 
	\sigma_{\eta_1}
	\widehat\otimes
	\sigma_{\eta_1}'
	\widehat\otimes
	\sigma_{\eta_1}''
\\ & 
= 
\big(
	\sigma_{\eta_2}
	\widehat\otimes
	\sigma_{2\eta_1'}'
	\oplus
	\sigma_{2\eta_1}
	\widehat\otimes
	\sigma_{\eta_2'}'
\big)
	\widehat\otimes
	\sigma_{0}''
	\,\oplus\, 
	\sigma_{0}
	\widehat\otimes
	\sigma_0'
	\widehat\otimes
	\sigma_{\eta_2''}''
	\,\oplus\, 
	\sigma_{\eta_1}
	\widehat\otimes
	\sigma_{\eta_1}'
	\widehat\otimes
	\sigma_{\eta_1}''
,
\end{aligned}
\end{equation}
as required. 
\end{proof}

We conclude by  Proposition~\ref{prop:sufficient}\eqref{item3:su-N!=6,7} that $\lambda_1(G/H,g_{\st})\geq 1$.

\subsection{Orthogonal cases for isotropy reducible spaces}\label{subsec5:SO}
In this subsection, we prove that  $\lambda_1(G/H,g_{\st})\geq1$ for all cases $G/H$ from Tables~\ref{table5:isotropyred-families}--\ref{table5:isotropyredexcepcions} with $\fg=\so(N)$ for some $N\in\N$ except for one, namely, Families XV--XIX, and Isolated cases No.~34 and 35.  
The result was already established for Family XVIIa in Subsection~\ref{subsec5:flag}, so we will omit it here.

\smallskip
\noindent
$\bullet$
Family XV: 
$\fg=\so(4n^2)$ and $\fh\simeq \spp(n)\oplus\spp(n)$ for $n\geq 2$.  

Set $H'=\Sp(n)\times\Sp(n)$ and $\rho=\sigma_{\eta_1}\widehat\otimes \,\sigma_{\eta_1'}': H'\to\GL(\C^{2n}\otimes \,\C^{2n})$.
We have that $\rho$ is of real type since the standard representations $\sigma_{\eta_1}$ and $\sigma_{\eta_1'}'$ of the factors $\Sp(n)$ are both of quaternionic type. 
Although we do not have the information of $G/H$, we know that $G$ is a compact Lie group with Lie algebra $\so(4n^2)$ and $H=\rho_0(H')\simeq H'/\op{Ker}(\rho_0)$, where $\rho_0:H'\to G$ is the only Lie group homomorphism with $d\rho_0=d\rho$. 

Since $n\geq2$, we have that $N=4n^2\geq 16$. 
By Proposition~\ref{prop:sufficient}\eqref{item3:SO}, it is sufficient to show that $V_{\pi_{\omega_1}}^{H} = V_{\pi_{\omega_1}}^{\fh_\C}=0$. 
Lemma~\ref{lem3:standard} yields $V_{\pi_{\omega_1}}^{\fh_\C}= V_\rho^{\fh_\C'}$, which is $0$ since $1_{\fh_\C'}$ does not occur in the decomposition of $(d\rho)_\C$ in irreducible factors.  

We conclude that $\lambda_1(G/H,g_{\st})\geq1$ in all cases. 

\smallskip
\noindent
$\bullet$
Family XVI: 
$\fg=\so(n^2)$ and $\fh\simeq \so(n)\oplus\so(n)$ for $n\geq 3$.  

Set $H'=\SO(n)\times\SO(n)$ and $\rho=\st_{\SO(n)} \widehat\otimes \,\st_{\SO(n)}': H'\to\GL(\C^{n}\otimes \,\C^{n})$.
Clearly $\rho$ is of real type. 
Again, we do not know the realization $G/H$, though we know that $G$ is a compact Lie group with Lie algebra $\so(n^2)$ and $H=\rho_0(H')\simeq H'/\op{Ker}(\rho_0)$, where $\rho_0:H'\to G$ is the only Lie group homomorphism with $d\rho_0=d\rho$. 

For $n\geq4$, we have that $N=n^2\geq 16$. 
By Proposition~\ref{prop:sufficient}\eqref{item3:so-N>=15}, it is sufficient to show that $V_{\pi_{\omega_1}}^{H} = V_{\pi_{\omega_1}}^{\fh_\C}=0$, which follows from Lemma~\ref{lem3:standard} as above. 

For $n=3$, we have $N=9$. 
By Proposition~\ref{prop:sufficient}\eqref{item3:so-N<15}, it is sufficient to show that $V_{\pi_{\omega_1}}^{H} =0$ and $V_{\pi_{\omega_{4}}}^{H}=0$. 
The first one follows from Lemma~\ref{lem3:standard}, and the second one from Lemma~\ref{lem3:dimV_pi^H} since $\pi_{\omega_{4}}|_{\fh_\C'}\simeq \sigma_{\eta_1} \widehat{\otimes} \sigma'_{3\eta_1'}\oplus \sigma_{3\eta_1}\widehat{\otimes}\sigma'_{\eta'_1}$ (see e.g.\ \cite[pp. 224]{LieART}; $\dim V_{\pi_{\omega_{4}}}=16$).

We conclude that $\lambda_1(G/H,g_{\st})\geq1$ in all cases. 

\smallskip
\noindent
$\bullet$
Family XVIIb: 
$\fg=\so(kn)$ and $\fh\simeq \so(n)\oplus\dots\oplus\so(n)$ ($k$-times) for $k\geq3$ and $n\geq3$. 

Set $H'=\SO(n)\times\dots\times\SO(n)$ ($k$-times) and 
\begin{equation}\label{eq5:rhoSO(nk)}
\rho=
{\textstyle \bigoplus\limits_{i=1}^k}\,
	\sigma_{0}^{(1)}
	\widehat\otimes\,\dots\widehat\otimes \,
	\sigma_{0}^{(i-1)} 
	\widehat\otimes\,
	\st_{\SO(n)}^{(i)}
	\widehat\otimes\,
	\sigma_{0}^{(i+1)} 
	\widehat\otimes\,\dots\widehat\otimes\,
	\sigma_{0}^{(k)}
: H'\to\GL\Big({\textstyle \bigoplus\limits_{i=1}^k}\, \C^{n}\Big)
. 
\end{equation}
Each term of $\rho$ is of real type, so is $\rho$ and $G_\rho=\SO(kn)$. 

We set $N=kn$. 
If $N\geq15$, Proposition~\ref{prop:sufficient}\eqref{item3:so-N>=15} yields that it is sufficient to show that $V_{\pi_{\omega_1}}^{\fh_\C}=0$, which follows immediately from Lemma~\ref{lem3:standard}. 

The only cases satisfying $N\leq 14$ are $(n,k)=(3,3), (3,4), (4,3)$. 
By Proposition~\ref{prop:sufficient}\eqref{item3:so-N<15}, it is enough to show that $V_{\pi_{\omega_1}}^{\fh_\C'}=0$ (which holds by Lemma~\ref{lem3:standard}) and  $V_{\pi_{\omega_{\lfloor {N}/{2}\rfloor}}}^{\fh_\C'}=0$. 
We have that 
$
\pi_{\omega_{4}}|_{\fh_\C'}= 2(\sigma_{\eta_1^{(1)}}^{(1)}\widehat{\otimes}\sigma_{\eta_1^{(2)}}^{(2)}\widehat{\otimes}\sigma_{\eta_1^{(3)}}^{(3)})
$ 
for $(n,k)=(3,3)$ where $\fh_\C'\simeq \su(2)_\C\oplus\su(2)_\C\oplus\su(2)_\C$
(see e.g.\ \cite[pp.\ 219, 152]{LieART}; $\dim V_{\pi_{\omega_{4}}|_{\fh_\C'}}=16$),
$
{\pi_{\omega_{6}}}|_{\fh_\C'} =
2(\sigma_{\eta_1^{(1)}}^{(1)}\widehat{\otimes}		\sigma_{\eta_1^{(2)}}^{(2)}\widehat{\otimes}
\sigma_{\eta_1^{(3)}}^{(3)}\widehat{\otimes}		\sigma_{\eta_1^{(4)}}^{(4)})
$ 
for $(n,k)=(3,4)$ where $\fh_\C'\simeq \su(2)_\C\oplus\su(2)_\C\oplus\su(2)_\C\oplus\su(2)_\C$
(see e.g.\ \cite[pp.\ 239, 152]{LieART}; $\dim V_{\pi_{\omega_{4}}|_{\fh_\C'}}=32$), and 
$
{\pi_{\omega_{6}}}|_{\fh_\C'} = (\sigma_{\eta_1^{(1)}}^{(1)}\widehat{\otimes} \sigma_{\eta_1^{(2)}}^{(2)}\widehat{\otimes}\sigma_{\eta_2^{(3)}}^{(3)})\oplus (\sigma_{\eta_1^{(1)}}^{(1)}\widehat{\otimes} \sigma_{\eta_2^{(2)}}^{(2)}\widehat{\otimes}\sigma_{\eta_1^{(3)}}^{(3)})\oplus(\sigma_{\eta_2^{(1)}}^{(1)}\widehat{\otimes} \sigma_{\eta_1^{(2)}}^{(2)}\widehat{\otimes}\sigma_{\eta_1^{(3)}}^{(3)})\oplus (\sigma_{\eta_2^{(1)}}^{(1)}\widehat{\otimes}\sigma_{\eta_2^{(2)}}^{(2)}\widehat{\otimes} \sigma_{\eta_2^{(3)}}^{(3)})$ 
for $(n,k)=(4,3)$ where $\fh_\C'= \so(4)_\C\oplus\so(4)_\C\oplus\so(4)_\C \simeq \su(2)_\C\oplus\su(2)_\C\oplus\su(2)_\C\oplus\su(2)_\C\oplus\su(2)_\C\oplus\su(2)_\C$
(see e.g.\ \cite[pp.\ 238, 197]{LieART}; $\dim V_{\pi_{\omega_{4}}|_{\fh_\C'}}=32$).
Since the trivial representation does not appear, Lemma~\ref{lem3:dimV_pi^H} gives $V_{\pi_{\omega_{\lfloor {N}/{2}\rfloor}}}^{\fh_\C}=0$ in the three cases as required. 

We conclude that $\lambda_1(G/H,g_{\st})\geq1$ in all cases.

\smallskip
\noindent
$\bullet$
Family XVIII: 
$\fg=\so(3n+2)$ and $\fh\simeq \so(n)\oplus \ut(n+1)$ for $n\geq3$. 

Set $H'=\SO(n)\times\Ut(n+1)$ and 
\begin{equation}\label{eq5:rhoSO(3n+2)}
\rho=
	\st_{\SO(n)}
	\widehat\otimes
	\sigma_{0}'
\,\oplus\,
	\sigma_{0}
	\widehat\otimes
\big(
	\sigma_{\eta_1'}'\oplus \sigma_{\eta_n'}'
\big)
: H'\to\GL\Big({\textstyle \bigoplus\limits_{i=1}^k}\, \C^{n}\Big)
. 
\end{equation}
Since $(\sigma_{\eta_1'}')^*\simeq \sigma_{\eta_n'}'$, we have that $\sigma_{\eta_1'}'\oplus \sigma_{\eta_n'}'$ admits a real structure (see Remark~\ref{rem2:types-of-representations}).
Consequently $\rho$ is of real type and $G_\rho=\SO(3n+2)$.

Lemma~\ref{lem3:standard} implies that  $V_{\pi_{\omega_1}}^{H}=V_{\pi_{\omega_1}}^{\fh_\C}= V_{\rho}^{\fh_\C'}$, which vanishes since $(d\rho)_\C$ does not contain $1_{\fh_\C'}$ in its irreducible decomposition. 

We set $N=3n+2$. 
For $n\geq5$, we have that $N\geq 17$, and $\lambda_1(G/H,g_{\st})\geq1$ follows from  Proposition~\ref{prop:sufficient}\eqref{item3:so-N>=15}.

For $n=4$, $N=14$, from Proposition~\ref{prop:sufficient}\eqref{item3:so-N<15} it remains to show that $V_{\pi_{\omega_7}}^H=0$. 
We have that (see e.g.\ \cite[pp.\ 226, 252]{LieART}; $\dim V_{\pi_{\omega_7}}=64$)
$
\pi_{\omega_7}|_{\fh_\C'}
\simeq (\sigma_{\eta_1}\widehat{\otimes}\sigma'_0\widehat{\otimes}\tau_{-5})\oplus (\sigma_{\eta_1}\widehat{\otimes} \sigma_{\eta'_4}'\widehat{\otimes}\tau_3) \oplus  (\sigma_{\eta_1}\widehat{\otimes}\sigma'_{\eta'_2}\widehat{\otimes}\tau_{-1})
\oplus (\sigma_{\eta_2}\widehat{\otimes}\sigma'_0\widehat{\otimes}\tau_5) \oplus  (\sigma_{\eta_2}\widehat{\otimes}\sigma'_{\eta'_1}\widehat{\otimes}\tau_{-3})\oplus (\sigma_{\eta_2}\widehat{\otimes}\sigma'_{\eta'_3}\widehat{\otimes}\tau_1),
$
where $\fh'_\C$ is being identified with $\so(4)\oplus \su(5)\oplus \mathfrak{u}(1)$. 
Therefore $V_{\pi_{\omega_7}}^H=0$ by Lemma~\ref{lem3:dimV_pi^H}. 

For $n=3$, $N=11$, from Proposition~\ref{prop:sufficient}\eqref{item3:so-N<15} it remains to show that $V_{\pi_{\omega_5}}^H=0$. 
We have that (see e.g.\ \cite[pp.~204, 233]{LieART}; $\dim V_{\pi_{\omega_7}}=32$)
$
\pi_{\omega_5}|_{\fh_\C'}\simeq (\sigma_{\eta_1}\widehat{\otimes}\sigma'_0\widehat{\otimes}\tau_2)\oplus (\sigma_{\eta_1}\widehat{\otimes}\sigma'_0\widehat{\otimes}\tau_{-2})\oplus (\sigma_{\eta_1} \widehat{\otimes} \sigma'_{\eta'_2}\widehat{\otimes}\tau_0)\oplus (\sigma_{\eta_1} \widehat{\otimes} \sigma'_{\eta'_1}\widehat{\otimes}\tau_{-1})\oplus (\sigma_{\eta_1}\widehat{\otimes}\sigma'_{\eta'_3}\widehat{\otimes}\tau_1),
$
where $\fh_\C'\simeq \so(3)_\C\oplus \su(4)_\C\oplus \mathfrak{u}(1)_\C$. Therefore $V_{\pi_{\omega_5}}^H=0$ by Lemma~\ref{lem3:dimV_pi^H}. 

We conclude that $\lambda_1(G/H,g_{\st})\geq1$ in all cases.

\smallskip
\noindent
$\bullet$
Family XIX: 
This is a conceptual construction explained in \cite[p.574]{WangZiller85}. 
We will prove for every member $G/H$ in this family that $\lambda_1(G/H,g_{\st})\geq1$. 
We will follow the notation and the procedure as in \cite[\S{}7]{EJlauret-Stab}. 

\begin{table}
\caption{Irreducible symmetric spaces admitted as $G_i/K_i$ in Family XIX.} 
\label{table5:G_i/K_i}
{\small 
$
\begin{array}{ccccc}
G_i/K_i & \text{Cond.} &
m_i & \tfrac{\dim\fk_i}{m_i} 
\\[0mm] \hline \hline \rule{0pt}{14pt}
	\SO(2n)/\SO(n)\times\SO(n) & n\geq 3 & 
	n^2 & \tfrac{n-1}{n}
\\[0mm] \hline \rule{0pt}{14pt}
	\Sp(2n)/\Sp(n)\times\Sp(n) & n\geq 2 & 
	4n^2 & \tfrac{2n+1}{2n}
\\[0mm] \hline \rule{0pt}{14pt} 
	\SO(n+1)/\SO(n) & n\geq 2 & 
	n & \tfrac{n-1}{2}
\\[0mm] \hline \rule{0pt}{14pt}
	\SU(n)/\SO(n) & n\geq3,\,n\neq4 & 
	\tfrac{(n-1)(n+2)}{2} & \tfrac{n}{n+2}
\\[0mm] \hline \rule{0pt}{14pt}
	\SU(2n)/\Sp(n) & n\geq 3 & 
	(n-1)(2n+1) & \tfrac{n}{n-1}
\\[0mm] \hline \rule{0pt}{14pt}
	E_6/\Sp(4) & - & 
	42 & \tfrac{6}{7}
\\[0mm] \hline \rule{0pt}{14pt}
	E_6/F_4 & -& 
	26& 2
\\[0mm] \hline \rule{0pt}{14pt}
	E_7/\SU(8)& - & 
	70& \tfrac{9}{10}
\\[0mm] \hline \rule{0pt}{14pt}
	E_8/\Spin(16) & - & 
	128&\tfrac{15}{16}
\\[0mm] \hline \rule{0pt}{14pt}
	F_4/\Spin(9) & - & 
	16 & \tfrac{9}{4}
\\[0mm] \hline \rule{0pt}{14pt}
	(H\times H)/\Delta H & \dim H>3 & 
	\dim{H} & 1
\\[0mm] \hline 
\end{array}
$}\\[0.5mm]
$H$ is any compact simple Lie group in the last row.
\end{table}

Let $l$ be an integer with $l\geq2$. 
For each $i=1,\dots,l$, we pick a compact irreducible symmetric space $G_i/K_i$ from the list in Table~\ref{table5:G_i/K_i}. 
We write $m_i=\dim (G_i/K_i)$, $\fg_i=\fk_i\oplus\fm_i$ the corresponding Cartan decomposition, and let $\rho_i:K_i\to \SO(\fm_i)$ be the isotropy representation of $G_i/K_i$, which is irreducible and of real type. 

We set $H'=K_1\times\dots\times K_l$ and 
\begin{equation}\label{eq5:rhoXIX}
\rho=
{\textstyle \bigoplus\limits_{i=1}^k}\,
	\sigma_{0}^{(1)}
	\widehat\otimes\,\dots\widehat\otimes \,
	\sigma_{0}^{(i-1)} 
	\widehat\otimes\,
	\rho_i
	\widehat\otimes\,
	\sigma_{0}^{(i+1)} 
	\widehat\otimes\,\dots\widehat\otimes\,
	\sigma_{0}^{(k)}
: H'\to\GL\Big({\textstyle \bigoplus\limits_{i=1}^k}\, \C^{m_i}\Big)
. 
\end{equation}
It follows that $\rho$ is of real type, thus $\rho(H')\subset G_{\rho}=\SO(N)$, where $N=m_1+\dots+m_l$.

The space ${\SO(N)}/{\rho(H')}$ is not necessarily simply connected. 
However, we know there is a connected Lie group $G$ with Lie algebra $\so(N)$ such that $G/H$ is simply connected, where $H=\rho_0(H')/\op{Ker}(\rho_0)$ and $\rho_0:H'\to G$ is the only Lie group homomorphism with $d\rho_0=d\rho$. 

We have that $V_{\pi_{\omega_1}}^{\fh_\C} 
= V_\rho^{\fh_\C'}$ by Lemma~\ref{lem3:standard}, thus  $V_{\pi_{\omega_1}}^{\fh_\C}=0$ since $(d\rho)_{\C}$ does not contain the trivial representation of $\fh_\C'$. 
Now,  Proposition~\ref{prop:sufficient}\eqref{item3:so-N>=15} 
implies the desired result if $N\geq15$.

It is proven in \cite[Example 3]{WangZiller85} that the standard metric $g_{\st}$ on $G/H$ (or equivalently on $\SO(N)/\rho(H')$) is Einstein if and only if $\frac{\dim\fk_i}{m_i}$ is independent of $i$, that is, 
\begin{equation}\label{eq5:dim(k_i)/m_i}
\frac{\fk_1}{m_1}=\dots= \frac{\fk_l}{m_l}
, 
\end{equation}
which will be assumed from now on. 

We now examine the cases satisfying $N\leq 14$. 
Since $m_i\geq2$, we first note that any $G_i/K_i$ with $m_i\geq 13$ cannot participate. 
Therefore, via a simple inspection at Table~\ref{table5:G_i/K_i}, the choices for each $G_i/K_i$ are reduced to 
$\SO(6)/\SO(3)\times\SO(3)$, 
$\SO(n+1)/\SO(n)$ for $2\leq n\leq 12$, 
$\SU(3)/\SO(3)$, 
and
$(H_i\times H_i)/H_i$ with $H_i$ a compact simple Lie group of dimension $\leq 12$, so $\fh_i$ is isomorphic to either $\su(2)$, $\su(3)$ or $\so(5)$. 

One can easily check that the condition \eqref{eq5:dim(k_i)/m_i} forces that all members $G_i/K_i$ for all $i=1,\dots,l$ come from the same row in Table~\ref{table5:G_i/K_i} (see \cite[Table~8]{EJlauret-Stab}). 
Furthermore, if $G_i/K_i=\SO(n+1)/\SO(n)$ for all $i$, the corresponding space belongs to Family XVIIb (see \cite[Rem.~7.2]{EJlauret-Stab}), which was already considered so we omit it.

If $G_i/K_i=\SO(6)/\SO(3)\times\SO(3)$ for all $i$, then $N=l m_i=9l\geq18$ since $l\geq2$, so this case is not possible.

If $G_i/K_i=\SU(3)/\SO(3)$ for all $i$, then $N=l m_i=5l$, so $l=2$. 
We have that $\fg\simeq \so(10)$ and $\fh\simeq \su(2)\oplus\su(2)$, and $(d\rho)_\C\simeq \sigma_{4\eta_1}\widehat\otimes\sigma_{0}'\oplus \sigma_{0}\widehat\otimes\sigma_{4\eta_1'}'$. 
By Proposition~\ref{prop:sufficient}\eqref{item3:so-N<15}, it only remains to show that $V_{\pi_{\omega_5}}^H=0$. 
We have that (see e.g.\ \cite[pp.\ 232, 330]{LieART}; $\dim V_{\pi_{\omega_5}}=16$) that $\pi_{\omega_5}|_{\fh_\C'}\simeq \sigma_{3\eta_1}\widehat{\otimes} \sigma'_{3\eta_1'}$.

Suppose $G_i/K_i=(H_i\times H_i)/H_i$ for some compact simple Lie groups $H_i$ for $i=1,\dots,l$. 
Since $m_i=\dim \fh_i$, the only choices satisfying  $N\leq 14$ are as follows:
\begin{equation*}
\begin{array}{ccccc}
\fh'\simeq \fh_1\oplus\dots\oplus\fh_l & N & \rho & \pi_{\omega_{\lfloor N/2\rfloor}}|_{\fh_\C'}
\\ \hline\hline \rule{0pt}{13pt}
2\su(2) & 6 & 
\sigma_{2\eta_1}\widehat\otimes\sigma_0' \oplus \sigma_0\widehat\otimes\sigma_{2\eta_1'}' & \sigma_{\eta_1}\widehat{\otimes} \sigma'_{\eta_1'}
\\ [4pt]
3\su(2) &9 &
	\sigma_{2\eta_1}
	\widehat\otimes
	\sigma_0'
	\widehat\otimes
	\sigma_0''
\oplus 
	\sigma_0
	\widehat\otimes
	\sigma_{2\eta_1'}'
	\widehat\otimes
	\sigma_0''
\oplus 
	\sigma_0
	\widehat\otimes
	\sigma_{0}'
	\widehat\otimes
	\sigma_{2\eta_ 1''}'' & 2(\sigma_{\eta_1}\widehat{\otimes}\sigma'_{\eta'_1}\widehat{\otimes} \sigma''_{\eta''_1})
\\ [4pt]
\su(2)\oplus\su(3) &11&
\sigma_{2\eta_1}\widehat\otimes\sigma_0' \oplus \sigma_0\widehat\otimes\sigma_{\eta_1'+\eta_2'}' & 2(\sigma_{\eta_1}\widehat{\otimes} \sigma'_{\eta_1'+\eta_2'})
\\ [4pt]
4\su(2) &12& 
\text{\small \(
\begin{array}{l}
	\sigma_{2\eta_1}
	\widehat\otimes
	\sigma_0'
	\widehat\otimes
	\sigma_0''
	\widehat\otimes
	\sigma_0'''
\oplus 
	\sigma_0
	\widehat\otimes
	\sigma_{2\eta_1'}'
	\widehat\otimes
	\sigma_0''
	\widehat\otimes
	\sigma_0'''
\\
\oplus 
	\sigma_0
	\widehat\otimes
	\sigma_{0}'
	\widehat\otimes
	\sigma_{2\eta_1''}''
	\widehat\otimes
	\sigma_0'''
\oplus 
	\sigma_0
	\widehat\otimes
	\sigma_{0}'
	\widehat\otimes
	\sigma_{0}'''
	\widehat\otimes
	\sigma_{2\eta_1''''}'''' 
\end{array}
\)}
& 
2(\sigma_{\eta_1}\widehat{\otimes} \sigma'_{\eta'_1} \widehat{\otimes} \sigma''_{\eta''_1} \widehat{\otimes} \sigma'''_{\eta'''_1} )
\\ [4pt]
\su(2)\oplus\so(5) &13&
\sigma_{2\eta_1}\widehat\otimes\sigma_0' \oplus \sigma_0\widehat\otimes\sigma_{2\eta_2'}' & 2(\sigma_{\eta_1}\widehat{\otimes} \sigma'_{\eta'_1+\eta'_2})
\\ [4pt]
2\su(2)\oplus\su(3) &14&
	\sigma_{2\eta_1}
	\widehat\otimes
	\sigma_0' 
	\widehat\otimes
	\sigma_0''
\oplus 
	\sigma_0
	\widehat\otimes
	\sigma_{2\eta_1'}' 
	\widehat\otimes
	\sigma_0''
\oplus 
	\sigma_0
	\widehat\otimes
	\sigma_0'
	\widehat\otimes
	\sigma_{\eta_1''+\eta_2''}'' & 2(\sigma_{\eta_1}\widehat{\otimes}\sigma'_{\eta'_1}\widehat{\otimes}\sigma''_{\eta''_1+\eta''_2})
\\
\end{array}
\end{equation*}
Proposition~\ref{prop:sufficient}\eqref{item3:so-N<15} ensures that it is sufficient to show that $V_{\pi}^{\fh_\C}=0$, where $\pi$ is the spin representation with highest weight $\omega_{\lfloor N/2\rfloor}$. 
The last column in the table shows the decomposition in irreducible components of such representation, and it follows that $V_{\pi}^{\fh_\C'}=0$ since $1_{\fh_\C'}$ never occurs.

We conclude that $\lambda_1(G/H,g_{\st})\geq1$ in all cases. 

\smallskip
\noindent
$\bullet$
Isolated case No.~34: 
$\fg=\so(8)$ and $\fh\simeq \fg_2$. 

Set $H'=\op{G}_2$. 
$\sigma_{\eta_1}$ is of real type so $\sigma_{\eta_1}(H')\subset \SO(7)$. 
Let $\rho=\iota\circ\sigma_{\eta_1}$, where $\iota$ is the standard embedding $\SO(7)\subset \SO(8)$, and let $\rho_0:H'\to \Spin(8)$ be the only Lie group homomorphism such that $d\rho=d\rho_0$. 
The corresponding connected and simply connected space is $G/H=\Spin(8)/\rho_0(H')$ with $H\simeq H'$. 
It turns out that $G/H$ is diffeomorphic to $S^7\times S^7$, though $g_{\st}$ is not isometric to a product  metric. 

In this very peculiar case, we have that $\dim V_{\pi_{\omega_1}}^H=1$. 
Indeed, Lemma~\ref{lem3:standard} yields $V_{\pi_{\omega_1}}^H=V_\rho^{\fh_\C'}$, which has dimension one due to the inclusion $\iota$. 
Now, Theorem~\ref{thm:Spec(standard)} and Table~\ref{table:CasimirEigenvalues} ensure that 
\begin{equation}
\lambda_1(\Spin(8)/\op{G}_2,g_{\st}) =\lambda^{\pi_{\omega_1}}=\frac{7}{12}<\frac35=2E.
\end{equation}
We conclude that the Einstein standard manifold $(\Spin(8)/\op{G}_2,g_{\st})$ has $\nu$-unstable conformal directions. 
We knew in advance that this space is $\nu$-unstable since it was established as $G$-unstable in \cite{EJlauret-Stab}.

\smallskip
\noindent
$\bullet$
Isolated case No.~35: 
$\fg=\so(26)$ and $\fh\simeq \spp(1)\oplus \spp(5)\oplus \so(6)$. 

Set $H'=\Sp(1)\times\Sp(5)\times\SO(6)$ and 
$
\rho=
	\sigma_{\eta_1}
	\widehat\otimes
	\sigma_{\eta_1'}'
	\widehat\otimes
	\sigma_{0}''
\,\oplus\, 
	\sigma_{0}
	\widehat\otimes
	\sigma_{0}'
	\widehat\otimes
	\sigma_{\eta_1''}''
: H'\to\GL\left(\C^{20}\oplus \C^{6} \right)
. 
$
Both terms of $\rho$ are of real type:
the second one is obvious and the first one is a tensor product of two quaternionic representations. 
Therefore, $\rho(H')\subset G_\rho= \SO(26)$.

Lemma~\ref{lem3:standard} implies that  $V_{\pi_{\omega_1}}^{H}=V_{\pi_{\omega_1}}^{\fh_\C}= V_{\rho}^{\fh_\C'}$, which vanishes since $(d\rho)_\C$ does not contain $1_{\fh_\C'}$ in its irreducible decomposition. 
Now, Proposition~\ref{prop:sufficient}\eqref{item3:so-N>=15} yields that $\lambda_1(G/H,g_{\st})\geq 1$.

\subsection{Exceptional cases for isotropy reducible spaces}\label{subsec5:exceptional}
In this subsection we prove that  $\lambda_1(G/H,g_{\st})\geq1$ for all cases $G/H$ from Tables~\ref{table5:isotropyred-families}--\ref{table5:isotropyredexcepcions} with $\fg$ of exceptional type except for one, namely, Isolated cases No.~36--55.  

\smallskip
\noindent
$\bullet$
Isolated case No.~36: 
$\fg=\ff_4$ and $\fh\simeq \so(8)$. 

According to \cite[Example 5]{WangZiller85}, $H:=\Spin(8)$ embeds into $G:=\op{F}_4$ via the standard inclusion $H\subset \Spin(9)$ and the embedding $\Spin(9)\subset G$ is determined by the maximal subalgebra $\so(9)_\C$ of $(\ff_4)_\C$. 

We have that   
$
\pi_{\omega_4}|_{H}\simeq 2\sigma_0\oplus \sigma_{\eta_1}\oplus\sigma_{\eta_3}\oplus\sigma_{\eta_4}
$
(see e.g.\ \cite[pp.\ 217, 387]{LieART}; $\dim V_{\pi_{\omega_4}}=26$).
Theorem~\ref{thm:Spec(standard)} and Table~\ref{table:CasimirEigenvalues} yield
\begin{equation}
\lambda_1(G/H,g_{\st})=\lambda^{\pi_{\omega_4}} = \frac{2}{3} 
< \frac{8}{9}=2E
.
\end{equation}
We conclude that the Einstein standard manifold $(\op{F}_4/\Spin(8),g_{\st})$ has $\nu$-unstable conformal directions. 
We knew that this space is $G$-unstable from \cite{EJlauret-Stab}.

\smallskip
\noindent
$\bullet$
Isolated cases No.~37--39: 
$\fg=\fe_6$. 

By Proposition~\ref{prop:sufficient}\eqref{item3:E6}, $\lambda_1(G/H,g_{\st})\geq 1$ provided that $V_{\pi_{\omega_1}}^{\fh_\C}=0$. 
We have $\dim V_{\pi_{\omega_1}}=27$.

For the case No.~37, $\fh'\simeq \su(2)\oplus\su(2)\oplus \su(2)$. 
We have that $\pi_{\omega_1}|_{\fh'_\C}\simeq \sigma_{2\eta_1}\widehat{\otimes} \sigma'_{2\eta'_1}\widehat{\otimes}\sigma''_0\oplus \sigma_{2\eta_1}\widehat{\otimes} \sigma'_{0}\widehat{\otimes}\sigma''_{2\eta''_1}\oplus \sigma_{0}\widehat{\otimes}\sigma'_{2\eta'_1}\widehat{\otimes}\sigma''_{2\eta''_1}$ 
(see e.g.\ \cite[pp. 375, 150]{LieART}).

For the case No.~38, $\fh'\simeq \su(2)\oplus\so(6)$. 
We have that 
$\pi_{\omega_1}|_{\fh'_\C}\simeq \sigma_{\eta_1}\widehat{\otimes}\sigma'_{\eta'_1}\oplus \sigma_0\widehat{\otimes} \sigma'_{\eta'_2+\eta'_3}$ 
(see e.g.\ \cite[pp. 374, 157]{LieART}).

For the case No.~39, $\fh'\simeq\so(8)\oplus \mathfrak{u}(1)\oplus\mathfrak{u}(1)$.
We have that 
$\pi_{\omega_1}|_{\fh'_\C}=(\sigma_0 \widehat{\otimes} \tau_1 \widehat{\otimes} \tau_4)\oplus (\sigma_0 \widehat{\otimes} \tau_2\widehat{\otimes} \tau_2)\oplus (\sigma_0 \widehat{\otimes} \tau_{-2} \widehat{\otimes} \tau_2)\oplus (\sigma_{\eta_1}\widehat{\otimes} \tau_0 \widehat{\otimes} \tau_2)\oplus (\sigma_{\eta_3} \widehat{\otimes} \tau_{-1} \widehat{\otimes} \tau_{-1}) \oplus (\sigma_{\eta_4} \widehat{\otimes} \tau_1 \widehat{\otimes} \tau_{-1})$ (see e.g.\ \cite[pp.\ 374, 228]{LieART}).

In the three cases we have that $V_{\pi_{\omega_1}}^{\fh_\C'}=0$ by Lemma~\ref{lem3:dimV_pi^H}, so $\lambda_1(G/H,g_{\st})\geq 1$.

\smallskip
\noindent
$\bullet$
Isolated cases No.~41--43: 
$\fg=\fe_7$. 

Proposition~\ref{prop:sufficient}\eqref{item3:E7} yields that $\lambda_1(G/H,g_{\st})\geq 1$ provided $V_{\pi_{\omega_7}}^{\fh_\C}=0$. We have that $\dim V_{\pi_{\omega_7}}=56$.

For the case No.~41, $\fh'\simeq 7\su(2)$.
We have that 
$
\pi_{\omega_7}|_{\fh_\C'}\simeq 
\sigma^{(1)}_{\eta^{(1)}_1}\widehat{\otimes}\sigma^{(2)}_{\eta^{(2)}_1}\widehat{\otimes} \sigma^{(3)}_{\eta^{(3)}_1}\oplus \sigma^{(1)}_{\eta^{(1)}_1}\widehat{\otimes}\sigma^{(4)}_{\eta^{(4)}_1}\widehat{\otimes}\sigma^{(5)}_{\eta^{(5)}_1}\oplus \sigma^{(1)}_{\eta^{(1)}_1}\widehat{\otimes}\sigma^{(6)}_{\eta^{(6)}_1}\widehat{\otimes}\sigma^{(7)}_{\eta^{(7)}_1}
\oplus
\sigma^{(2)}_{\eta^{(2)}_1}\widehat{\otimes} \sigma^{(4)}_{\eta^{(4)}_1}\widehat{\otimes}\sigma^{(7)}_{\eta^{(7)}_1}
\oplus \sigma^{(2)}_{\eta^{(2)}_1}\widehat{\otimes}\sigma^{(5)}_{\eta^{(5)}_1}\widehat{\otimes}\sigma^{(6)}_{\eta^{(6)}_1}
\oplus 
\sigma^{(3)}_{\eta^{(3)}_1}\widehat{\otimes}\sigma^{(4)}_{\eta^{(4)}_1}\widehat{\otimes} \sigma^{(6)}_{\eta^{(6)}_1}\oplus \sigma^{(3)}_{\eta^{(3)}_1}\widehat{\otimes}\sigma^{(5)}_{\eta^{(5)}_1}\widehat{\otimes}\sigma^{(7)}_{\eta^{(7)}_1}
$
(see e.g.\ \cite[pp.\ 379, 238, 197]{LieART}).
Here, tensoring with the identity representation is omitted for brevity.

For the case No.~42, $\fh'\simeq \so(8)$.
We have that 
$
\pi_{\omega_7}|_{\fh_\C'}\simeq 
2\sigma_{\eta_2}
$
(see e.g.\ \cite[pp.\ 379, 164]{LieART}).
In \cite{SchwahnSemmelmannWeingart22}, the case $G/H=\op{E}_7/(\SO(8)/\Z_2)$ was established as H.stable prior to \cite{Schwahn-Lichnerowicz}.

For the case No.~43, $\fh'\simeq 3\su(2)\oplus \so(8)$.
We have that (see e.g.\ \cite[pp.\ 379, 238]{LieART})
$
\pi_{\omega_7}|_{\fh_\C'}\simeq 
\sigma_{\eta_1}\widehat{\otimes}\sigma'_{\eta'_1}\widehat{\otimes}\sigma''_{\eta''_1}\widehat{\otimes}\sigma'''_0\oplus \sigma_{\eta_1}\widehat{\otimes}\sigma'_0\widehat{\otimes}\sigma''_0\widehat{\otimes}\sigma'''_{\eta'''_1}
\oplus
\sigma_0\widehat{\otimes}\sigma'_{\eta'_1}\widehat{\otimes}\sigma''_0\widehat{\otimes}\sigma'''_{\eta'''_3}\oplus \sigma_0\widehat{\otimes}\sigma'_0\widehat{\otimes}\sigma''_{\eta''_1}\widehat{\otimes}\sigma'''_{\eta'''_4}
$.

In the three cases we have that $V_{\pi_{\omega_7}}^{\fh_\C'}=0$ by Lemma~\ref{lem3:dimV_pi^H}, so $\lambda_1(G/H,g_{\st})\geq 1$.

\smallskip
\noindent
$\bullet$
Isolated cases No.~45--54: 
$\fg=\fe_8$. 

Proposition~\ref{prop:sufficient}\eqref{item3:E8} immediately implies $\lambda_1(G/H,g_{\st})\geq 1$ for any of these isolated cases.

\subsection{Computational results for isotropy reducible cases} \label{subsec5:computationalresults}

Analogously to the calculations of $\lambda_1$ with the help of a computer in Subsection~\ref{subsec4:computational} for isotropy irreducible spaces, we compute with the same algorithm, for each entry $(\fg,\fh)$ in Table~\ref{table5:isotropyredexcepcions}, the number
\begin{equation}\label{eq5:lowerbound}
\min\{\lambda^\pi: \pi\in\widehat {G}_{\text{sc}}\text{ and }V_\pi^H\neq0\},
\end{equation}
where $G_{\text{sc}}$ denotes the (unique up to isomorphism) simply connected and connected compact Lie group with Lie algebra $\fg$, and $H$ denotes the only connected subgroup of $G_{\text{sc}}$ with Lie algebra $\fh$. 

Since we do not know that all the corresponding spaces are presented as $G_{\text{sc}}/H$ with $H$ connected, the value \eqref{eq5:lowerbound} is a lower bound of $\lambda_1(G/H,g_{\st})$ for any isotropically reducible space $G/H$ corresponding to $(\fg,\fh)$.

No members of the families listed in Table~\ref{table5:isotropyred-families} were computed.

\bibliographystyle{plain}

\end{document}